\theoremstyle{theorem}\newtheorem{theorem}{Theorem}[section]
\theoremstyle{theorem}\newtheorem{lemma}[theorem]{Lemma}
\theoremstyle{theorem}\newtheorem{proposition}[theorem]{Proposition}
\theoremstyle{theorem}\newtheorem{corollary}[theorem]{Corollary}
\theoremstyle{definition}\newtheorem{definition}{Definition}[section]
\theoremstyle{definition}
\theoremstyle{definition}
\theoremstyle{definition}
\theoremstyle{definition}
\begin{document}
    \sloppy
    \begin{center}
        \large{\textbf{String C-groups from groups of order $2^m$ and exponent at least $2^{m - 3}$}} \\[4mm]
    \end{center}

    \begin{center}
        \textbf{Mark L. Loyola} \\[1mm]

        mloyola@ateneo.edu \\
        Department of Mathematics \\
        Ateneo de Manila University, Philippines
    \end{center}

    \begin{abstract}
        This work provides a classification of string C-groups of order $2^m$ and exponent at least $2^{m - 3}$. Prior to the classification, we complete the list of groups of exponent $2^{m - 3}$ and rank at least 3. The main result states that, aside from the cyclic and dihedral groups, only two groups of exponent at least $2^{m - 3}$ are connected string C-groups. Both have rank 3 and are quotients of the string Coxeter group $W = [4, 2^{m - 3}]$.
    \end{abstract}

\section{Introduction}
\label{sec:intro}

    A set $S = \{s_0, s_1, \ldots, s_{n - 1}\}$, $n \geq 1$, of distinct involutions in a group $\mathcal{G}$ (not necessarily finite or a 2-group) can be assigned an edge-labeled graph or diagram $\mathcal{D}(S)$ consisting of $n$ vertices. Each vertex of $\mathcal{D}(S)$  represents an involution and an edge connects the vertices representing $s_j$ and $s_k$ and is labeled $p_{j, k} = \text{ord}(s_js_k)$ if and only if $p_{j, k} \geq 3$.  By convention, we omit the label in the diagram when $p_{j, k} = 3$. If $p_{j, k} = 2$ for $0 \leq j, k \leq n - 1$ whenever $|j - k| \geq 2$, then $\mathcal{D}(S)$ is a \textit{string diagram}. We call this property the \textit{string condition}. If, in addition to the string condition, we also have $p_{j, j + 1} > 2$ for $0 \leq j \leq n - 2$, then $\mathcal{D}(S)$ is a \textit{connected string diagram} (see \textbf{Figure \ref{fig:stringdiag}}).
    \begin{figure}[H]
        \centering
        \includegraphics[scale = 0.33, keepaspectratio = true]{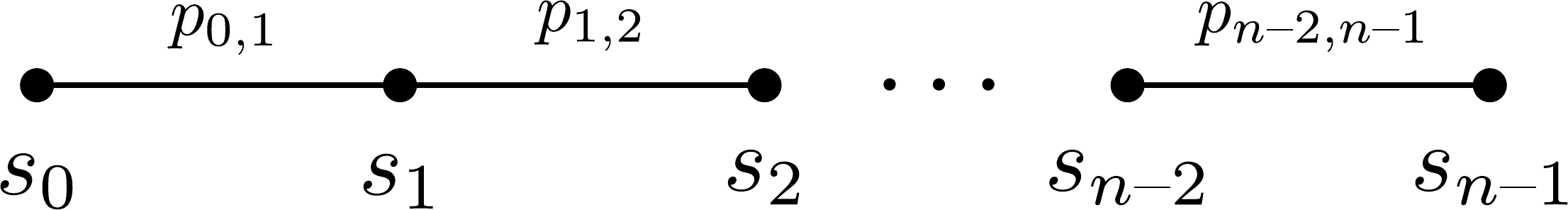}
        \caption{A connected string diagram.}
        \label{fig:stringdiag}
    \end{figure}

    The following terms will be used to describe the pair $(\mathcal{G}, S)$ in the special case when $\mathcal{G} = \langle{S}\rangle$.
    \begin{definition}[String C-Group]
        Let $\mathcal{G}$ be a group generated by a set $S = \{s_0, s_1, \ldots, s_{n - 1}\}$, $n \geq 1$, of involutions called its \textit{distinguished generators}. The pair $(\mathcal{G}, S)$ is called a \textit{string C-group} with respect to $S$ if $\mathcal{D}(S)$ is a string diagram and $S$ satisfies the property that
        \[ \text{for all $J, K \subseteq \{0, 1, \ldots, n - 1\}$, } \langle{s_j \mid j \in J}\rangle \cap \langle{s_k \mid k \in K}\rangle = \langle{s_i \mid i \in J \cap K}\rangle. \]
        The latter property is called the \textit{intersection condition}. A string C-group $(\mathcal{G}, S)$ is called a \textit{connected string C-group} if $\mathcal{D}(S)$ is a connected string diagram.
        \label{def:StringCGroups}
    \end{definition}

    It is important to remark that a string C-group $(\mathcal{G}, S)$ always involves a pair consisting of a group $\mathcal{G}$ and a set $S$ of distinguished generators. If we refer simply to $\mathcal{G}$ as a string C-group, then it is implied that either a set $S$ of distinguished generators for $\mathcal{G}$ has been specified or that we can find a suitable set $S$ of generating involutions that satisfies the intersection condition.

    The definition of a string C-group stemmed from the extension of the concept of a regular polytope from classical geometry to an \textit{abstract polytope}. Whereas classical regular polytopes include, for instance, regular polyhedra and regular star polyhedra in the Euclidean space, abstract polytopes include certain abstract posets, regular maps, and even tessellations. One of the fundamental results of the theory of abstract regular polytopes is the correspondence between regular abstract polytopes and string C-groups acting as their groups of automorphisms. The book \textit{Abstract Regular Polytopes}~\cite{McMullenSchulte2002} by P. McMullen and E. Schulte offers a comprehensive account of the theory of abstract polytopes and string C-groups including history and recent developments in the subject.

    As with most specialized groups, no known general method exists for checking if a given group is a string C-group. Consequently, most papers that deal with characterizations or classifications of string C-groups consider particular types of groups or families of groups that satisfy certain qualifications or possess interesting properties. These include, for instance, simple and almost simple groups~\cite{HartleyHulpke2010, Leemans2006, LeemansVauthier2006}, symmetric and alternating groups~\cite{FernandesLeemans2011, FernandesLeemansMixer2012}, discrete groups of transformations~\cite{JohnsonWeiss1999, Monson1995}, groups of a fixed rank~\cite{McMullenSchulte1992}, and small groups~\cite{Hartley2006}. The last reference pertains to M.I. Hartley's \textit{Atlas of Small Regular Polytopes}, a web-based library of string C-groups of order at most 2000 (except 1024 and 1536) and their associated regular polytopes. The library is accessible through the link \url{http://www.abstract-polytopes.com/atlas/}. The use of computational group theory software such as GAP~\cite{GAP2015} and Magma~\cite{Magma2015} has been indispensable in these classifications. Such software are extremely helpful most especially in handling groups with a relatively large set of involutions.

    In \textit{Problems on polytopes, their groups, and realizations}~\cite{SchulteWeiss2006}, E. Schulte and A. Weiss proposed the problem of characterizing groups of order $2^m$ or $2^mp$, $p$ an odd prime, which are string C-groups or are automorphism groups of chiral polytopes. This problem arose from the difficulty encountered in the creation of the \textit{Atlas} for the groups of order $2^9 = 512$, $2^{10} = 1024$, and $2^9 \cdot 3 = 1536$. The numbers of groups of the said orders are huge and, without a usable characterization theorem, handling them requires an extensive use of computational resources.

    We contribute to the solution of this problem for a special class of 2-groups. In particular, we are interested in classifying string C-groups of order $2^m$ and exponent at least $2^{m - 3}$. The method of classification we will present takes advantage of the fact that these 2-groups have been previously enumerated, albeit incomplete for groups of exponent exactly $2^{m - 3}$~\cite{Burnside1911, McKelden1906, Ninomiya1994}.

    We outline the paper as follows. In the next section, we discuss the fundamental concepts and results related to 2-groups and string C-groups. We also introduce the notations used throughout this paper. In \textbf{Section \ref{sec:allgroups}}, we give the complete list of groups of order $2^m$ and exponent $2^{m - 2}$ or $2^{m - 3}$ that are relevant to our study. Further, we compute these groups' respective sets of involutions. Finally, in \textbf{Section \ref{sec:stringC}}, we state and prove our main result pertaining to the classification of string C-groups of order $2^m$ and exponent at least $2^{m - 3}$.
    
    \section{Notations and preliminaries}
\label{sec:intro}
    \subsection{$2$-groups}
    \label{subsec:2groups}

    We assume throughout this paper, unless stated otherwise, that $\mathcal{G}$ is a finite 2-group of order $2^m$, where $m$ is a positive integer.  We denote by $\text{exp}(\mathcal{G})$ and $\text{rank}(\mathcal{G})$ the \textit{exponent} and \textit{rank}, respectively, of $\mathcal{G}$. We use $\text{ord}(g)$ to denote the order of an element $g \in \mathcal{G}$.

    By exponent\footnote{Note that for an arbitrary group, the exponent is defined to be the least common multiple of the orders of all elements in the group. If there is no least common multiple, the exponent is taken to be infinity.} of a 2-group $\mathcal{G}$, we mean the maximum of the orders of all the elements in $\mathcal{G}$: $\text{exp}(\mathcal{G}) = \max\{\text{ord}(g) : g \in \mathcal{G}\}$. Thus, $\mathbf{Z}_{\text{exp}(\mathcal{G})}$ is the cyclic group of the largest possible order inside $\mathcal{G}$.

    The \textit{rank} of $\mathcal{G}$ refers to the minimum of the cardinalities of all generating sets for $\mathcal{G}$: $\text{rank}(\mathcal{G}) = \min\{|X| : X \subseteq \mathcal{G}, \langle{X}\rangle = \mathcal{G}\}$.

    By Burnside's Basis Theorem~\cite{Berkovich2008}, any two minimal generating sets for $\mathcal{G}$ have the same cardinality equal to $\text{rank}(\mathcal{G})$. In addition, if $\Phi(\mathcal{G})$ is the \textit{Frattini subgroup} of $\mathcal{G}$ (the intersection of all maximal subgroups of $\mathcal{G}$), then the quotient group $\mathcal{G}/\Phi(\mathcal{G})$ is isomorphic to the elementary abelian group $\mathbf{Z}_2^{\text{rank}(\mathcal{G})}$, the direct product of $\text{rank}(\mathcal{G})$ copies of $\mathbf{Z}_2$.

    Let $S = \{g_0, g_1, \ldots, g_{n - 1}\}$ be a minimal generating set for $\mathcal{G}$ and define $\rho : \mathcal{G} \to \mathcal{G}/\Phi(\mathcal{G})$ to be the canonical projection sending $g_i$ to $\overline{g_i} = g_i\Phi(\mathcal{G})$. Then $\overline{S} = \{\overline{g_0}, \overline{g_1}, \ldots, \overline{g_{n - 1}}\}$ is a minimal generating set for $\mathcal{G}/\Phi(\mathcal{G})$. For any non-empty $\overline{X} \subseteq \overline{S}$, define the map $\psi_{\: \overline{X}} : \mathcal{G}/\Phi(\mathcal{G}) = \langle{\overline{S}}\rangle \to \mathbf{Z}_2$ which sends $\overline{g_i}$ to
    \[
        \psi_{\: \overline{X}}(\overline{g_i}) =
        \begin{cases}
            -1 & \text{ if $\overline{g_i} \in \overline{X}$} \\
            \;\:\: 1 & \text{ if $\overline{g_i} \notin \overline{X}$}.
        \end{cases}
    \]
    Then $\psi_{\: \overline{X}}$ defines a surjective group homomorphism. Consequently, the composition $\varphi_X = (\psi_{\: \overline{X}} \circ \rho)$ also defines a surjective homomorphism from $\mathcal{G}$ to $\mathbf{Z}_2$ for any non-empty subset $X \subseteq S$. We have thus proved the following proposition.
   \begin{proposition}
        Let $S = \{g_0, g_1, \ldots, g_{n - 1}\}$ be a minimal generating set for $\mathcal{G}$. Then for any non-empty subset $X \subseteq S$, the map $\varphi_X : \mathcal{G} \to \mathbf{Z}_2$ sending a generator $g_i \in S$ to
        \[
            \varphi_X(g_i) =
            \begin{cases}
                -1 & \text{ if $g_i \in X$} \\
                \;\:\: 1 & \text{ if $g_i \notin X$} \\
            \end{cases}
        \]
        defines a surjective group homomorphism. \\
        \label{prop:2-groupHom}
    \end{proposition}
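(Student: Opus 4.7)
The plan is essentially to follow the construction that the preceding paragraph already sketches, but with the required checks made explicit. The strategy is to first build the desired homomorphism on the abelianized/Frattini quotient $\mathcal{G}/\Phi(\mathcal{G})$, where life is linear-algebraic, and then pull back via the canonical projection.

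First, I would invoke Burnside's Basis Theorem, already cited in the excerpt, to identify $\mathcal{G}/\Phi(\mathcal{G})$ with the elementary abelian 2-group $\mathbf{Z}_2^n$ where $n = \mathrm{rank}(\mathcal{G})$, viewed as an $n$-dimensional $\mathbf{F}_2$-vector space. Because $S$ is a minimal generating set, the images $\overline{g_0}, \ldots, \overline{g_{n-1}}$ under $\rho$ are distinct and form a basis of this vector space; in particular, the assignment $g_i \mapsto \overline{g_i}$ is a bijection $S \to \overline{S}$, so the subset $\overline{X} := \rho(X)$ is non-empty whenever $X$ is.

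Next, I would define $\psi_{\overline{X}}: \mathcal{G}/\Phi(\mathcal{G}) \to \mathbf{Z}_2$ on the basis $\overline{S}$ by the prescription in the statement and extend by linearity. Here is the one step that deserves a moment of justification: since $\overline{S}$ is an $\mathbf{F}_2$-basis, every element of $\mathcal{G}/\Phi(\mathcal{G})$ has a unique expression as a product of distinct elements of $\overline{S}$, so the set-theoretic map on $\overline{S}$ extends uniquely to a group homomorphism on the whole quotient — this is just the universal property of free $\mathbf{F}_2$-modules, specialized to $\mathbf{Z}_2$. Surjectivity of $\psi_{\overline{X}}$ is immediate: $\overline{X}$ is non-empty, so some basis element is sent to $-1$, and both elements of $\mathbf{Z}_2$ are therefore hit.

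Finally, set $\varphi_X := \psi_{\overline{X}} \circ \rho$. As a composition of surjective homomorphisms, $\varphi_X$ is itself a surjective homomorphism $\mathcal{G} \to \mathbf{Z}_2$, and by construction $\varphi_X(g_i) = \psi_{\overline{X}}(\overline{g_i})$ agrees with the displayed formula. The only conceivable obstacle is purely bookkeeping — confirming that $\rho$ restricts to a bijection $S \to \overline{S}$ so that the values of $\varphi_X$ on $S$ are the intended ones — but this is an immediate consequence of Burnside's Basis Theorem applied to the minimal generating set $S$, so the argument goes through without real difficulty.
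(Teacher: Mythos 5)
Your argument is correct and is essentially the paper's own proof: the paper likewise constructs $\psi_{\overline{X}}$ on the Frattini quotient $\mathcal{G}/\Phi(\mathcal{G}) \cong \mathbf{Z}_2^{\mathrm{rank}(\mathcal{G})}$ and obtains $\varphi_X$ as the composition $\psi_{\overline{X}} \circ \rho$. You merely make explicit the linear-algebraic justification (the universal property of a basis over $\mathbf{F}_2$) that the paper leaves implicit.
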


    The next result whose proof immediately follows from \textbf{Proposition \ref{prop:2-groupHom}} provides a necessary condition for a 2-group to be generated by its set of \textit{involutions} (elements of order 2): $\text{inv}(\mathcal{G}) = \{g \in \mathcal{G} : \text{ord}(\mathcal{G}) = 2\}$. This corollary will be used later in \textbf{Section \ref{sec:allgroups}} to show that a given 2-group is not generated by involutions.
    \begin{corollary}
        Let $\mathcal{G}$ be a 2-group with minimal generating set $S$. If $\mathcal{G} = \langle{\text{inv}(\mathcal{G})}\rangle$, then for every non-empty subset $X \subseteq S$, $\varphi_X(\langle{\text{inv}(\mathcal{G})}\rangle) = \{-1, 1\}$.
        \label{cor:notgenbyinvs}
    \end{corollary}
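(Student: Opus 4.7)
The statement is essentially a direct specialization of the preceding proposition, so my proof plan is very short. The plan is to observe that the hypothesis $\mathcal{G} = \langle\text{inv}(\mathcal{G})\rangle$ turns the set whose image we must compute into all of $\mathcal{G}$, and then invoke surjectivity of $\varphi_X$.

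More concretely, first I would fix an arbitrary non-empty $X \subseteq S$ and rewrite $\varphi_X(\langle\text{inv}(\mathcal{G})\rangle) = \varphi_X(\mathcal{G})$, using only the hypothesis that the subgroup generated by involutions is the whole group. Second, I would recall that by \textbf{Proposition \ref{prop:2-groupHom}} the map $\varphi_X : \mathcal{G} \to \mathbf{Z}_2$ is a surjective homomorphism, so $\varphi_X(\mathcal{G}) = \mathbf{Z}_2 = \{-1, 1\}$. Chaining the two equalities yields the claimed conclusion.

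There is no real obstacle here; the corollary is stated in the form that will be convenient for the contrapositive application in \textbf{Section \ref{sec:allgroups}}, namely: to show that some 2-group $\mathcal{G}$ is \emph{not} generated by its involutions, it suffices to exhibit a single non-empty $X \subseteq S$ for which $\varphi_X$ sends every involution of $\mathcal{G}$ (and hence every element of $\langle\text{inv}(\mathcal{G})\rangle$) to $1$. The only thing I would be careful about in the write-up is noting that $\varphi_X$ being a homomorphism guarantees $\varphi_X(\langle\text{inv}(\mathcal{G})\rangle) = \langle\varphi_X(\text{inv}(\mathcal{G}))\rangle$, which is what makes the contrapositive form genuinely useful in practice.
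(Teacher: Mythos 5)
Your proposal is correct and matches the paper's own justification, which simply notes that the corollary follows immediately from \textbf{Proposition \ref{prop:2-groupHom}}: the hypothesis $\mathcal{G} = \langle\text{inv}(\mathcal{G})\rangle$ reduces the claim to the surjectivity of $\varphi_X$. Your added remark about the contrapositive use and the identity $\varphi_X(\langle\text{inv}(\mathcal{G})\rangle) = \langle\varphi_X(\text{inv}(\mathcal{G}))\rangle$ accurately reflects how the corollary is applied in \textbf{Section \ref{sec:allgroups}}.
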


    \subsection{String C-groups}
    \label{subsec:stringCgroups}

    String C-groups of rank 2 are the dihedral groups. Hence, in any classification theorem involving string C-groups, one may work with the assumption that a string C-group $(\mathcal{G}, S)$ has rank at least 3. In addition, we can further assume that $\mathcal{D}(S)$ is connected, since any disconnected string C-group may be expressed as a direct product of subgroups corresponding to the connected components of $\mathcal{D}(S)$. These subgroups, which are generated by distinguished generators and which arise from the induced subdiagrams of $\mathcal{D}(S)$, are called \textit{distinguished subgroups} of $\mathcal{G}$.

    Examples of string C-groups are symmetry groups of regular convex and star-polytopes, automorphism groups of regular abstract polytopes~\cite{McMullenSchulte2002}, and string Coxeter groups~\cite{Humphreys1990}. In fact, the ``C'' in the term \textit{string C-group} stands for ``Coxeter.'' The connection between string C-groups and string Coxeter groups is made much deeper and precise in the following statement: a string C-group $\mathcal{G}$ is a \textit{smooth} quotient of the string Coxeter group $W = [p_{0, 1}, p_{1, 2}, \ldots, p_{n - 2, n - 1}]$, the Coxeter group with the same diagram as $\mathcal{G}$. By smooth, we mean that $\text{ord}(s_js_k) = p_{j, k}$ and, thus, a group presentation for $\mathcal{G}$ includes all relations in the presentation for $W$. In this case, we say the $\mathcal{G}$ has \textit{(Schl\"{a}fli) type} $\{p_{0, 1}, p_{1, 2}, \ldots, p_{n - 2, n - 1}\}$.

    Between the two conditions which defines a string C-group, it is the intersection condition that significantly reduces the number of string C-groups from the large number of groups generated by involutions. It is, however, also the condition that is more difficult to verify especially for groups of high ranks. This is mainly because there is no known general method for establishing this property for an arbitrary group. In addition, the number of possible combinations for pairs of subsets of $S$ becomes large as the rank gets larger. A method which uses induction on $\text{rank}(\mathcal{G})$ provided by Conder and Oliveros in~\cite{ConderOliveros2013} reduces the number of cases to be checked when establishing the intersection condition. For example, when $\text{rank}(\mathcal{G}) = 3$, there is only one case to check:
    \begin{lemma}
        The intersection condition is satisfied by the group $\mathcal{G} = \langle{s_0, s_1, s_2}\rangle$ of rank 3 if $\langle{s_0, s_1}\rangle \cap \langle{s_1, s_2}\rangle = \langle{s_1}\rangle$.
        \label{lem:rank3intcond}
    \end{lemma}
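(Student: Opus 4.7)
The plan is a direct enumeration of cases. The intersection condition requires checking the equation
\[ \langle s_j : j \in J \rangle \cap \langle s_k : k \in K \rangle = \langle s_i : i \in J \cap K \rangle \]
for all $64$ pairs of subsets $J, K \subseteq \{0, 1, 2\}$, but the cases where one of $J, K$ is empty or where $J \subseteq K$ (or $K \subseteq J$) are immediate. What remains are the pairs of incomparable subsets, and up to swapping $J$ and $K$ there are only a handful of genuinely new cases: $(\{0\}, \{2\})$, $(\{0\}, \{1,2\})$, $(\{2\}, \{0,1\})$, $(\{1\}, \{0,2\})$, $(\{0,1\}, \{0,2\})$, $(\{1,2\}, \{0,2\})$, and the hypothesis case $(\{0,1\}, \{1,2\})$. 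Throughout, I would use two basic facts: the string condition forces $s_0 s_2 = s_2 s_0$, so $\langle s_0, s_2 \rangle = \{e, s_0, s_2, s_0 s_2\}$; and the generators $s_0, s_1, s_2$ are pairwise distinct involutions, so $\langle s_i \rangle \cap \langle s_j \rangle = \{e\}$ when $i \neq j$.

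First I would handle $\langle s_0 \rangle \cap \langle s_1, s_2 \rangle = \{e\}$: if $s_0$ lay in $\langle s_1, s_2 \rangle$, then $s_0 \in \langle s_0, s_1 \rangle \cap \langle s_1, s_2 \rangle = \langle s_1 \rangle$, forcing $s_0 = s_1$, a contradiction. The symmetric case $\langle s_2 \rangle \cap \langle s_0, s_1 \rangle = \{e\}$ is identical. The case $\langle s_0 \rangle \cap \langle s_2 \rangle = \{e\}$ then follows trivially (or by inclusion into the previous case). Next, for $\langle s_0, s_1 \rangle \cap \langle s_0, s_2 \rangle = \langle s_0 \rangle$, take $g$ in the intersection; since $g \in \langle s_0, s_2 \rangle = \{e, s_0, s_2, s_0 s_2\}$, the only way $g \notin \langle s_0 \rangle$ is $g = s_2$ or $g = s_0 s_2$, and in either case multiplying by $s_0$ on the left places $s_2$ in $\langle s_0, s_1 \rangle$, contradicting the previous step. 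The symmetric $\langle s_0, s_2 \rangle \cap \langle s_1, s_2 \rangle = \langle s_2 \rangle$ is analogous.

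The last remaining case is $\langle s_1 \rangle \cap \langle s_0, s_2 \rangle = \{e\}$: if $s_1 \in \langle s_0, s_2 \rangle = \{e, s_0, s_2, s_0 s_2\}$, the only possibility consistent with distinctness is $s_1 = s_0 s_2$, and then $s_2 = s_0 s_1 \in \langle s_0, s_1 \rangle$, again contradicting the first step. This exhausts the non-trivial pairs and establishes the intersection condition.

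There is no single hard step; the main obstacle is to be systematic about the case enumeration and to notice that every contradiction ultimately funnels through the single input identity $\langle s_0, s_1 \rangle \cap \langle s_1, s_2 \rangle = \langle s_1 \rangle$ together with the string-condition description of $\langle s_0, s_2 \rangle$ as the Klein four-group on $s_0, s_2$. Once that observation is in hand, the proof is essentially bookkeeping.
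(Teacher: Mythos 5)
Your proof is correct and complete. Note, however, that the paper itself offers no proof of this lemma: it is quoted as an instance of the rank-induction criterion of Conder and Oliveros~\cite{ConderOliveros2013}, so there is nothing internal to compare against. Your direct enumeration is exactly the elementary argument one would expect: after discarding the comparable pairs, every remaining case reduces either to the pairwise distinctness of the $s_i$ or to the two assertions $s_0 \notin \langle s_1, s_2\rangle$ and $s_2 \notin \langle s_0, s_1\rangle$, both of which fall out of the hypothesis $\langle s_0, s_1\rangle \cap \langle s_1, s_2\rangle = \langle s_1\rangle$, with the string condition supplying the explicit description $\langle s_0, s_2\rangle = \{e, s_0, s_2, s_0s_2\}$. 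Two small remarks: first, you should state explicitly that the string condition $s_0s_2 = s_2s_0$ is being assumed (the lemma as printed omits it, but it is implicit throughout the paper, and your case $(\{0,1\},\{0,2\})$ genuinely needs it); second, in that case the phrase ``in either case multiplying by $s_0$ on the left'' is slightly off for $g = s_2$, where no multiplication is needed --- the intended conclusion $s_2 \in \langle s_0, s_1\rangle$ holds either way, so this is only a matter of wording.
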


    From the \textbf{SmallGroups} Library of GAP, we can perform a search to list down the groups of order $2^m$ and exponent $2^{m - 2}$ for $m = 3, 4, 5$, or exponent $2^{m - 3}$ for $m = 4, 5, 6$. We can then check which among the groups in this list appear in the \textit{Atlas of Small Regular Polytopes} and, hence, are string C-groups. We have the following classification of string C-groups for these small values of $m$:
    \begin{itemize}
        \item For $m = 3, 4, 5$, there are no connected string C-groups of exponent $2^{m - 2}$.
        \item For $m = 4$, there are no connected string C-groups of exponent $2^{m - 3}$.
        \item For $m = 5$, there exists only one connected string C-group of exponent $2^{m - 3}$. This group has GAP ID \texttt{[ 32, 27 ]} and has type $\{4, 4\}$. It is isomorphic to $(\mathbf{D}_2 \times \mathbf{D}_2) \rtimes \mathbf{Z_2}$.
        \item For $m = 6$, there exist two connected string C-groups of exponent $2^{m - 3}$. These groups have GAP IDs \texttt{[ 64, 128 ]} and \texttt{[ 64, 134 ]} and have type $\{4, 8\}$. They are isomorphic to $(\mathbf{D}_4 \times \mathbf{D}_2) \rtimes \mathbf{Z_2}$ and $(\mathbf{D}_4 \rtimes \mathbf{D}_2) \rtimes \mathbf{Z_2}$, respectively.
    \end{itemize}

    Thus, for a complete classification of string C-groups of order $2^m$ and exponent at least $2^{m - 3}$, it suffices to focus on the groups $\mathcal{G}$ of rank at least 3 that satisfy $\text{exp}(\mathcal{G}) = 2^{m}$ for $m \geq 1$, $\text{exp}(\mathcal{G}) = 2^{m - 1}$ for $m \geq 2$, $\text{exp}(\mathcal{G}) = 2^{m - 2}$ for $m \geq 6$, or $\text{exp}(\mathcal{G}) = 2^{m - 3}$ for $m \geq 7$. A complete list of the groups in the last two families is given in the next section.
    
    \section{2-groups of exponent $2^{m - 2}$ ($m \geq 6$) and $2^{m - 3}$ ($m \geq 7$)}
\label{sec:allgroups}

    The determination of groups of order $2^m$ containing cyclic subgroups of maximal order $2^{m - 1}$, $2^{m - 2}$, and $2^{m - 3}$ was (re)accomplished, respectively, by W. Burnside~\cite{Burnside1911}; S. Bai, Burnside, G. Miller, Y. Ninomiya, ~\cite{Bai1985, Burnside1911, Miller1901, Miller1902, Ninomiya1994}; and A. McKelden~\cite{McKelden1906}. Q. Zhang and P. Li in~\cite{ZhangLi2012} asserted, however, that McKelden's list is incomplete and contains some errors. A quick comparison of the number of groups in~\cite{McKelden1906} and in GAP's \textbf{SmallGroups} Library~\cite{GAP2015} for a few values of $m$ gives weight to this assertion. Since the method of proof that we shall employ to classify string C-groups of order $2^m$ and exponent at least $2^{m - 3}$ relies heavily on the completeness of the list, it becomes crucial to fill in the missing groups of exponent $2^{m - 3}$ that are relevant to our study. We run GAP to perform a brute force parameter search to identify these missing groups. In the process, we also amend the tuples of parameters defining some of the groups in McKelden's paper.
    \subsection{$\text{exp}(\mathcal{G}) = 2^{m - 2}$, $m \geq 6$}
    \label{subsec:2^m-2}

    Assume $m \geq 6$. Let $p$ be an element of $\mathcal{G}$ with $\text{ord}(p) = 2^{m - 2}$. Then $[\mathcal{G} : \langle{p}\rangle] = 4$ and $g^4 \in \langle{p}\rangle$ for any $g \in \mathcal{G}$. We consider two possibilities.
    \begin{itemize}
        \item Suppose there exists $q \in \mathcal{G} - \langle{p}\rangle$ such that $q^2 \notin \langle{p}\rangle$. Then $q^i\langle{p}\rangle$ for $0 \leq i \leq 3$ are the four distinct cosets of $\langle{p}\rangle$ in $\mathcal{G}$. It follows immediately that $\mathcal{G} = \langle{p, q}\rangle$ and, therefore, $\text{rank}(\mathcal{G}) = 2$. We denote by $\mathcal{N}_{I}(m)$ the class to which these groups belong.
        \item Suppose that $g^2 \in \langle{p}\rangle$ for any $g \in \mathcal{G}$. Let $q \in \mathcal{G} - \langle{p}\rangle$. Then $\langle{p}\rangle \trianglelefteq \langle{p, q}\rangle$, since $(pq^{-1})^2\langle{p}\rangle = \langle{p}\rangle$ implies that $qpq^{-1}\langle{p}\rangle = q^2p^{-1}\langle{p}\rangle = \langle{p}\rangle$. So every element of $\langle{p, q}\rangle$ can be written uniquely as $p^iq^j$, where $0 \leq i \leq 2^{m - 2} - 1$ and $j = 0, 1$. It follows that $|\langle{p, q}\rangle| = 2^{m - 1}$ and there exists $r \in \mathcal{G} - \langle{p, q}\rangle$ such that $\mathcal{G} = \langle{p, q, r}\rangle$. Therefore, $\text{rank}(\mathcal{G}) \leq 3$. We denote by $\mathcal{N}_{II}(m)$ the class to which these groups belong.
    \end{itemize}

    \subsubsection{Class $\mathcal{N}_I(m)$}
    \label{subsec:ClassN_I}

    There are 18 non-isomorphic groups in class $\mathcal{N}_{I}(m)$. A.M. McKelden in~\cite{McKelden1906} compiled a list of groups in this class and further subdivided them into 7 subclasses, which we denote by $\mathcal{N}_{I-A}(m)$, $\mathcal{N}_{I-B}(m)$, $\mathcal{N}_{I-C}(m)$, $\mathcal{N}_{I-D}(m)$, $\mathcal{N}_{I-E}(m)$, $\mathcal{N}_{I-F}(m)$, and $\mathcal{N}_{I-G}(m)$. The list, however, contained two errors: a pair of isomorphic groups in $\mathcal{N}_{I-D}(m)$ were listed as distinct groups and the group with presentation
    \[
        \Bigg\langle
        p, q \; \Bigg\mid
        \begin{array}{cc}
            p^{2^{m - 2}} = 1, \;
            q^4 = p^{2^{m - 3}}, \;
            q^{-1}pq = q^2p^{-1 + 2^{m - 3}}, \;
            q^{-2}pq^2 = p^{1 + 2^{m - 3}}
        \end{array}
        \Bigg\rangle,
    \]
    was omitted.

    \subsubsection{Class $\mathcal{N}_{II}(m)$}
    \label{subsec:ClassN_II}

    There are 9 non-isomorphic groups in class $\mathcal{N}_{II}(m)$ given as group presentations below~\cite{McKelden1906}. We denote these groups by $\mathcal{N}_{II, n}(m)$ for $1 \leq n \leq 9$. The 5-tuple of integral parameters $e_i$ defining each group is found in \textbf{Table \ref{tbl:NII,params}}.
    \[
        \mathcal{N}_{II, n}(m) =
        \Bigg\langle
        p, q, r \; \Bigg\mid
        \begin{array}{cc}
            p^{2^{m - 2}} = 1, \;
            q^2 = 1, \;
            r^2 = p^{2^{m - 3}e_5}, \;
            q^{-1}pq = p^{1 + 2^{m - 3}e_2}, \\
            r^{-1}pr = p^{e_1 + 2^{m - 3}e_3}, \;
            r^{- 1}qr = qp^{2^{m - 3}e_4}
        \end{array}
        \Bigg\rangle
    \]
    The completeness of the list of groups in this class was verified using~\cite{Ninomiya1994}.

    We concluded earlier that the groups belonging to class $\mathcal{N}_{II}(m)$ have rank at most 3. Since $|\langle{p, q}\rangle| = 2^{m - 1}$, to show that the rank of each group in this class is exactly 3, we just need to show that the orders of the subgroups $\langle{p, r}\rangle$ and $\langle{q, r}\rangle$ of $\mathcal{N}_{II, n}(m)$ is less than $2^m$. Based on the relations in the presentation of $\mathcal{N}_{II, n}(m)$, we conclude that
    \begin{itemize}
        \item every element of $\langle{p, r}\rangle$ can be written uniquely in the form $p^ir^k$, where $i \in \mathbf{Z}_{2^{m - 2}}$ and $k \in \mathbf{Z}_2$ and
        \item every element of $\langle{q, r}\rangle$ can be written uniquely in the form $p^iq^jr^k$, where $i \in \{0 \mod 2^{m - 2}, 2^{m - 3} \mod 2^{m - 2}\}$ and $j, k \in \mathbf{Z}_2$.
    \end{itemize}

    Consequently, $|\langle{p, r}\rangle|$, $|\langle{q, r}\rangle| < 2^m$. Thus, $\{p, q, r\}$ is a minimal generating set for $\mathcal{N}_{II, n}(m)$.

    \subsection{$\text{exp}(\mathcal{G}) = 2^{m - 3}$, $m \geq 7$}
    \label{subsec:2^m-3}

    Assume $m \geq 7$. Let $p$ be an element of $\mathcal{G}$ with $\text{ord}(p) = 2^{m - 3}$. Then $[\mathcal{G} : \langle{p}\rangle] = 8$ and $g^8 \in \langle{p}\rangle$ for any $g \in \mathcal{G}$. We consider three possibilities.
    \begin{itemize}
        \item Suppose there exists $q \in \mathcal{G} - \langle{p}\rangle$ such that $q^4 \notin \langle{p}\rangle$. Then $q^i\langle{p}\rangle$ for $0 \leq i \leq 7$ are the eight distinct cosets of $\langle{p}\rangle$ in $\mathcal{G}$. It follows immediately that $\mathcal{G} = \langle{p, q}\rangle$ and, therefore, $\text{rank}(\mathcal{G}) = 2$. We denote by $\mathcal{M}_{I}(m)$ the class to which these groups belong.
        \item Suppose that $g^4 \in \langle{p}\rangle$ for any $g \in \mathcal{G}$ and that there exists $q \in \mathcal{G} - \langle{p}\rangle$ such that $q^2 \notin \langle{p}\rangle$. Then $q^i\langle{p}\rangle$ for $0 \leq i \leq 3$ are four distinct cosets of $\langle{p}\rangle$ in $\mathcal{G}$. So the elements $p^iq^j$, where $0 \leq i \leq 2^{m - 3} - 1$ and $0 \leq j \leq 3$, make up half of the elements of $\mathcal{G}$, and therefore, $|\langle{p, q}\rangle| \geq 2^{m - 1}$. Two cases arise. If $|\langle{p, q}\rangle| = 2^{m - 1}$, then $\langle{p, q}\rangle$ must belong to class $\mathcal{N}_I(m - 1)$ and there exists $r \in \mathcal{G} - \langle{p, q}\rangle$ such that $\mathcal{G} = \langle{p, q, r}\rangle$. Therefore, $\text{rank}(\mathcal{G}) \leq 3$. We denote by $\mathcal{M}_{II}(m)$ the class to which these groups belong. If $|\langle{p, q}\rangle| = 2^m$, on the other hand, then $\mathcal{G} = \langle{p, q}\rangle$ and, therefore, $\text{rank}(\mathcal{G}) = 2$.  We denote by $\mathcal{M}_{II'}(m)$ the class to which these groups belong.
        \item Suppose that $g^2 \in \langle{p}\rangle$ for any $g \in \mathcal{G}$. Using an argument similar to that used to describe the groups in class $\mathcal{N}_{II}(m)$, we conclude that we can find $q \in \mathcal{G} - \langle{p}\rangle$ and $r \in \mathcal{G} - \langle{p, q}\rangle$ such that $|\langle{p, q, r}\rangle| = 2^{m - 1}$ and $\langle{p, q, r}\rangle$ belongs to class $\mathcal{N}_{II}(m - 1)$. Thus, there exists $s \in \mathcal{G} - \langle{p, q, r}\rangle$ such that $\mathcal{G} = \langle{p, q, r, s}\rangle$ and, therefore, $\text{rank}(\mathcal{G}) \leq 4$. We denote by $\mathcal{M}_{III}(m)$ the class to which these groups belong.
    \end{itemize}

    The presentations of groups belonging to the above three classes were constructed and tabulated by McKelden in~\cite{McKelden1906}. Each group is parameterized by integers obtained by solving a system of congruence equations. Each solution to the system corresponds to powers of the generators in the group relations of the presentation. McKelden also showed that other combinations of parameters will result to groups that are isomorphic to those already in the list. She demonstrated this by giving the corresponding group isomorphisms, which were expressed in terms of a separate series of congruence equations. However, there are some combination of parameters that have been left out. In addition, the list contains a few groups of order less than $2^m$. Further, some pairs of groups, which are actually isomorphic, have been listed down separately.

    For classes $\mathcal{M}_{II}(m)$ and $\mathcal{M}_{III}(m)$ containing groups of rank 3 and 4, respectively, we shall complete the list of groups; make parameter adjustments to the groups of order less than $2^m$; and identify those groups which are isomorphic. These tasks were accomplished with the help of GAP by observing the steps below. \\

    \noindent \textbf{Step 1.} For a given group presentation and a given value of $m$, run GAP to find all possible combinations of parameters that will give rise to non-isomorphic groups of order $2^m$ and exponent $2^{m - 3}$. \\

    \noindent \textbf{Step 2.} Show that those combinations of parameters that will yield isomorphic groups for the given value of $m$ in \textbf{Step 1} will indeed yield isomorphic groups for any $m \geq 7$ by explicitly giving the isomorphisms. Note that a group isomorphism may be generated using GAP for particular value of $m$. This isomorphism may then be generalized for any $m \geq 7$ by means of a substitution of generators. \\

    \subsubsection{Class $\mathcal{M}_{II}(m)$}
    \label{subsec:ClassM_II}

    Each of the 123 non-isomorphic groups in class $\mathcal{M}_{II}(m)$ is classified into seven subclasses according to the subclass of $\mathcal{N}_{I}(m - 1)$ to which its subgroup $\langle{p, q}\rangle$ belongs. The presentations of the groups from~\cite{McKelden1906} are reproduced below. The sets of integral parameters $e_i$ defining the groups with the corrections and additions already incorporated are found in the tables of \textbf{\ref{app:params}}. \\

    \[
        \mathcal{M}_{II-A, n}(m) =
        \Bigg\langle
        p, q, r \; \Bigg\mid
        \begin{array}{cc}
            p^{2^{m - 3}} = 1, \;
            q^4 = 1, \;
            r^2 = q^{2e_3}p^{2^{m - 4}e_4}, \;
            q^{-1}pq = p^{1 + 2^{m - 5}e_7}, \\
            r^{-1}pr = q^{2e_1}p^{e_8 + 2^{m - 4}e_5}, \;
            r^{- 1}qr = q^{1 + 2e_2}p^{2^{m - 5}e_6}
        \end{array}
        \Bigg\rangle,
    \]

    \[
        \mathcal{M}_{II-B, n}(m) =
        \Bigg\langle
        p, q, r \; \Bigg\mid
        \begin{array}{cc}
            p^{2^{m - 3}} = 1, \;
            q^4 = 1, \;
            r^2 = q^{2e_3}, \;
            q^{-1}pq = p^{-1 + 2^{m - 5}e_6}, \\
            r^{-1}pr = q^{2e_1}p^{e_7 + 2^{m - 4}e_5}, \;
            r^{-1}qr = q^{1 + 2e_2}p^{2^ue_4}
        \end{array}
        \Bigg\rangle,
    \]

    \[
        \mathcal{M}_{II-C, n}(m) =
        \Bigg\langle
        p, q, r \; \Bigg\mid
        \begin{array}{cc}
            p^{2^{m - 3}} = 1, \;
            q^4 = p^{2^{m - 4}}, \;
            r^2 = 1, \;
            q^{-1}pq = p^{-1}, \\
            r^{-1}pr = p^{1 + 2^{m - 4}e_2}, \;
            r^{- 1}qr = q^{1 + 2e_1}
        \end{array}
        \Bigg\rangle,
    \]

    \[
        \mathcal{M}_{II-D, n}(m) =
        \Bigg\langle
        p, q, r \; \Bigg\mid
        \begin{array}{cc}
            p^{2^{m - 3}} = 1, \;
            q^4 = 1, \;
            r^2 = q^{2e_3}, \;
            q^{-1}pq = q^2p^{-1 + 2^{m - 4}e_7}, \\
            q^{-2}pq^2 = p^{1 + 2^{m - 4}e_6}, \;
            r^{-1}pr = q^{2e_1}p^{e_8 + 2^{m - 4}e_5}, \;
            r^{-1}qr = q^{1 + 2e_2}p^{2^ue_4}
        \end{array}
        \Bigg\rangle,
    \]

    \[
        \mathcal{M}_{II-E, n}(m) =
        \Bigg\langle
        p, q, r \; \Bigg\mid
        \begin{array}{cc}
            p^{2^{m - 3}} = 1, \;
            q^4 = 1, \;
            r^2 = 1, \;
            q^{-1}pq = q^2p^{1 - 2^{m - 5}e_2}, \\
            q^{-2}pq^2 = p^{1 + 2^{m - 4}e_2}, \;
            r^{-1}pr = p^{e_1 + 2^{m - 4}e_4}, \;
            r^{-1}qr = qp^{2^{m - 5}e_3}
        \end{array}
        \Bigg\rangle,
    \]

    \[
        \mathcal{M}_{II-F, n}(m) =
        \Bigg\langle
        p, q, r \; \Bigg\mid
        \begin{array}{cc}
            p^{2^{m - 3}} = 1, \;
            q^4 = p^{2^{m - 4}}, \;
            r^2 = 1, \;
            q^{-1}pq = q^2p^{-1 + 2^{m - 4}e_5}, \\
            q^{-2}pq^2 = p^{1 + 2^{m - 4}e_1}, \;
            r^{-1}pr = q^{2e_3}p^{1 + 2^{m - 4}e_2}, \;
            r^{-1}qr = q^{1 + 2e_3}p^{2^{m - 4}e_4}
        \end{array}
        \Bigg\rangle,
    \]

    \[
        \mathcal{M}_{II-G, n}(m) =
        \Bigg\langle
        p, q, r \; \Bigg\mid
        \begin{array}{cc}
            p^{2^{m - 3}} = 1, \;
            q^4 = p^4, \;
            r^2 = 1, \;
            q^{-1}pq = q^2p^{-1 + 2^{m - 5}e_4}, \\
            q^{-2}pq^2 = p^{1 + 2^{m - 4}e_1}, \;
            r^{-1}pr = p^{1 + 2^{m - 4}e_3}, \;
            r^{-1}qr = qp^{2^{m - 4}e_2}
        \end{array}
        \Bigg\rangle,
    \]

    \noindent where
    \[
            u =
            \begin{cases}
                1 & \text{if $n = 27, 28$ for $\mathcal{M}_{II-B}(m)$ or if $n = 10, 11$ for $\mathcal{M}_{II-D}(m)$}, \\
                m - 4 & \text{otherwise}. \\
            \end{cases}
    \]

    As before, we can use each of the relations above to show that rank of each group in class $\mathcal{M}_{II}(m)$ is exactly 3. Since $|\langle{p, q}\rangle| = 2^{m - 1}$ by assumption, it suffices to show that $|\langle{p, r}\rangle|$, $|\langle{q, r}\rangle| < 2^m$. We omit the details for brevity.

    In~\cite{McKelden1906}, 46 groups were included in subclass $\mathcal{M}_{II-A}(m)$. However, the groups $\mathcal{M}_{II-A, 21}(m)$ and $\mathcal{M}_{II-A, 22}(m)$ are isomorphic with isomorphism given by $p_1 \mapsto p_2$, $q_1 \mapsto p_2^{2^{m - 5}}q_2$, $r_1 \mapsto p_2q_2r_2$. So are the groups $\mathcal{M}_{II-A, 23}(m)$ and $\mathcal{M}_{II-A, 24}(m)$ with isomorphism given by $p_1 \mapsto p_2$, $q_1 \mapsto q_2$, $r_1 \mapsto p_2q_2r_2$. This reduces the number of non-isomorphic groups in the said subclass to 44. In addition, for the group $\mathcal{M}_{II-A, 27}(m)$, we change the value of $e_7$ from the original 1 to 2. Note that if $e_7 = 1$, then $|\mathcal{M}_{II-A, 27}(m)| = 2^{m - 1}$.

    There are 28 non-isomorphic groups in subclass $\mathcal{M}_{II-B}(m)$ including the additional groups $\mathcal{M}_{II-B, n}(m)$ for $n = 12, 21, 22, 23$. These are found by running a search for 7-tuples $(e_1, e_2, e_3, e_4, e_5, e_6, e_7)$ of parameters $e_1, e_2, e_3, e_4, e_5 \in \mathbf{Z}_2$, $e_6 \in \mathbf{Z}_4$, and $e_7 \in \{-1, 1\}$ that will generate new groups. Lastly, for $\mathcal{M}_{II-B, 28}(m)$, McKelden imposed the restriction that $m > 7$. We may, however, also include the case when $m = 7$ for this group.

    There are 5, 9, and 7 non-isomorphic groups in subclass $\mathcal{M}_{II-C}(m)$, $\mathcal{M}_{II-F}(m)$, and $\mathcal{M}_{II-G}(m)$, respectively. These counts include the additional groups $\mathcal{M}_{II-C, 5}(m)$, $\mathcal{M}_{II-F, 3}(m)$, $\mathcal{M}_{II-G, 5}$, and $\mathcal{M}_{II-G, 7}$ belonging to their respective subclass. These are found by running a search for pairs $(e_1, e_2)$ of parameters $e_1 \in \mathbf{Z}_4$ and $e_2 \in \mathbf{Z}_2$ for $\mathcal{M}_{II-C}(m)$; 5-tuples $(e_1, e_2, e_3, e_4, e_5)$ of parameters with $e_1, e_2, e_4, e_5 \in \mathbf{Z}_2$ and $e_3 \in \mathbf{Z}_4$ for $\mathcal{M}_{II-F}(m)$; and 4-tuples $(e_1, e_2, e_3, e_4)$ of parameters with $e_1, e_2, e_3 \in \mathbf{Z}_2$ and $e_4 \in \mathbf{Z}_2$ for $\mathcal{M}_{II-G}(m)$ that will generate groups not yet in the list.

    The subclass $\mathcal{M}_{II-D}(m)$ contains 22 non-isomorphic groups including the additional group $\mathcal{M}_{II-D, 8}(m)$. These are found by running a search for 8-tuples $(e_1, e_2, e_3, e_4, e_5, e_6, e_7, e_8)$ of parameters  $e_1, e_2, e_3, e_4, e_5, e_6, e_7 \in \mathbf{Z}_2$ and $e_8 \in \{-1, 1\}$ that will generate new groups. The groups $\mathcal{M}_{II-D, 15}(m)$ and $\mathcal{M}_{II-D, 10}(m)$ were originally treated separately. However, they are isomorphic with isomorphism given by $p_1 \mapsto q_2r_2$, $q_1 \mapsto q_2$, $r_1 \mapsto q_2p_2r_2$.

    Finally, the subclass $\mathcal{M}_{II-E}(m)$ contains 8 non-isomorphic groups including the additional group $\mathcal{M}_{II-E, 8}(m)$.  This is found by running a search for 4-tuples $(e_1, e_2, e_3, e_4)$ of parameters $e_1 \in \{-1, 1\}$, $e_2, e_3 \in \mathbf{Z}_4$, and $e_4 \in \mathbf{Z}_2$ that will generate groups not yet in the list. For $\mathcal{M}_{II-E, 4}(m)$, we change the value of $(e_3, e_4)$ from the original $(1, 0)$ to $(2, 1)$. Likewise, for $\mathcal{M}_{II-E, 7}(m)$, we change the value of $e_3$ from the original $1$ to $2$. Note that given the original values of the parameters, $|\mathcal{M}_{II-E, 4}(m)| = |\mathcal{M}_{II-E, 7}(m)| = 2^{m - 1}$.


    \subsubsection{Class $\mathcal{M}_{III}(m)$}
    \label{subsec:ClassM_III}

    There are 14 non-isomorphic groups in class $\mathcal{M}_{III}(m)$ including the additional groups $\mathcal{M}_{III, n}(m)$ for $n = 2, 12, 13, 14$. The presentation of each of these groups is given below.
    \[
        \mathcal{M}_{III, n}(m) =
        \Biggl\langle
        p, q, r, s \; \Biggl\mid\Biggr.
        \begin{array}{cc}
            p^{2^{m - 3}} = 1, \;
            q^2 = 1, \;
            r^2 = p^{2^{m - 4}e_8}, \;
            s^2 = p^{2^{m - 4}e_5}, \;
            q^{-1}pq = p, \\
            r^{-1}pr = p^{e_2 + 2^{m - 4}e_3}, \;
            s^{-1}ps = p^{e_1 + 2^{m - 4}e_6}, \;
            r^{-1}qr = q, \\
            s^{-1}qs = qp^{2^{m - 4}e_4}, \;
            s^{-1}rs = rp^{2^{m - 4}e_7}
        \end{array}
        \Bigg\rangle
    \]

    We assert that each group in class $\mathcal{M}_{III}(m)$ has rank exactly 4. Since $|\langle{p, q, r}\rangle| = 2^{m - 1}$, this assertion can be verified by showing that the order of each of the subgroups $\langle{p, q, s}\rangle$, $\langle{p, r, s}\rangle$, $\langle{q, r, s}\rangle$ of $\mathcal{M}_{III, n}(m)$ is less than $2^m$. We omit the details for brevity.

    The additional groups in the class are found by running a search for 8-tuples $(e_1, e_2, e_3, e_4, e_5, e_6, e_7, e_8)$ of parameters $e_1, e_2 \in \{-1, 1\}$ and $e_3, e_4, e_5, e_6, e_7, e_8 \in \mathbf{Z}_2$ that will generate groups not yet in the list.

    The following result whose proof is straightforward by means of a substitution of generators gives the subgroup relations between the groups in class $\mathcal{M}_{III, n}(m) = \langle{p, q, r, s}\rangle$ and in class $\mathcal{N}_{II, n}(m - 1)$.
    \begin{proposition}
        The subgroup $\langle{p, r, s}\rangle$ or $\langle{p, q, r}\rangle$ of $\mathcal{M}_{III, n}(m)$ is isomorphic to $\mathcal{N}_{II, n'}(m) = \langle{p', q', r'}\rangle$ via the isomorphism
        \[
            \alpha =
            \begin{cases}
                p \mapsto p', \: r \mapsto q', \: s \mapsto r' & \text{ if } (n, n') \in \{(1, 1), (2, 2), \ldots, (9, 9)\}, \\
                p \mapsto p', \: q \mapsto q', \: r \mapsto r' & \text{ if } (n, n') \in \{(10, 5), (11, 6), (12, 8), (13, 8), (14, 7)\}. \\
            \end{cases}
        \]
        \label{prop:M_III(m),N_II(m-1)rels}
    \end{proposition}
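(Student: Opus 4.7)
The plan is to verify each of the 14 claimed isomorphisms by direct substitution, exploiting the fact that both sides have small, explicit presentations by generators and relations. (Comparing orders, the target on the right-hand side must be $\mathcal{N}_{II,n'}(m-1)$ rather than $\mathcal{N}_{II,n'}(m)$, matching the discussion immediately preceding the proposition.)

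The first step is to restrict the defining relations of $\mathcal{M}_{III,n}(m)$ to those involving only the three generators of the chosen subgroup. For $n\in\{1,\ldots,9\}$ the subgroup is $\langle p,r,s\rangle$, and the surviving relations are those that do not mention $q$: $p^{2^{m-3}}=1$, $r^2=p^{2^{m-4}e_8}$, $s^2=p^{2^{m-4}e_5}$, $r^{-1}pr=p^{e_2+2^{m-4}e_3}$, $s^{-1}ps=p^{e_1+2^{m-4}e_6}$, and $s^{-1}rs=rp^{2^{m-4}e_7}$. Under the substitution $p\mapsto p'$, $r\mapsto q'$, $s\mapsto r'$ these take the same shape as the defining relations of $\mathcal{N}_{II,n'}(m-1)$, provided the tuple $(e_1,\ldots,e_8)$ of $\mathcal{M}_{III,n}$ is compatible with that of $\mathcal{N}_{II,n'}$; in particular, this forces $e_2=1$ and $e_8=0$ in the $\mathcal{M}_{III}$ presentation, while the remaining parameters $e_1,e_3,e_5,e_6,e_7$ translate directly into the five parameters of $\mathcal{N}_{II,n'}$. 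For $n\in\{10,\ldots,14\}$ the subgroup is $\langle p,q,r\rangle$, and the surviving relations $p^{2^{m-3}}=1$, $q^2=1$, $r^2=p^{2^{m-4}e_8}$, $q^{-1}pq=p$, $r^{-1}pr=p^{e_2+2^{m-4}e_3}$, $r^{-1}qr=q$ likewise match an $\mathcal{N}_{II}(m-1)$ presentation, but now with the parameters governing the $q$-$p$ and $r$-$q$ interactions forced to zero. This explains why the image always lies in the sub-family indexed by $n'\in\{5,6,7,8\}$, and why $n=12$ and $n=13$ both yield $n'=8$: these two groups differ only in parameters that are erased when one restricts to $\langle p,q,r\rangle$.

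Once the induced relations match, the substitution $\alpha$ extends to a surjective homomorphism from the group presented by $\mathcal{N}_{II,n'}(m-1)$ onto the subgroup of $\mathcal{M}_{III,n}(m)$, because the images of the generators satisfy the defining relations and generate the subgroup. To upgrade surjectivity to bijectivity, I would invoke the order counts already available: the reasoning in the paragraph following the presentation of $\mathcal{M}_{III}$ gives $|\langle p,q,r\rangle|=2^{m-1}$, and the same style of argument (writing each element in a canonical normal form in the three chosen generators) gives $|\langle p,r,s\rangle|=2^{m-1}$; meanwhile $|\mathcal{N}_{II,n'}(m-1)|=2^{m-1}$ by construction, so a surjection between them is an isomorphism.

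The only genuine obstacle is bookkeeping: for each $n\in\{1,\ldots,14\}$ one must look up the parameter tuple of $\mathcal{M}_{III,n}(m)$ in the appendix, compute the induced tuple for $\mathcal{N}_{II}(m-1)$ via the identification above, and confirm that the resulting presentation is indexed exactly by the $n'$ asserted in the proposition. This is a mechanical case check across fourteen rows and produces no mathematical difficulty, which aligns with the author's remark that the proof is straightforward by substitution of generators.
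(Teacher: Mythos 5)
Your proposal is correct and is exactly the argument the paper intends: the paper dismisses the proof as ``straightforward by means of a substitution of generators,'' and you carry out precisely that substitution, match the restricted relations against the $\mathcal{N}_{II}$ presentation and its parameter tables, and close the argument with the order count $|\langle p,r,s\rangle| = |\langle p,q,r\rangle| = |\mathcal{N}_{II,n'}(m-1)| = 2^{m-1}$. You also correctly identify the typo in the statement (the target should be $\mathcal{N}_{II,n'}(m-1)$, not $\mathcal{N}_{II,n'}(m)$), consistent with the sentence preceding the proposition.
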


    The next proposition gives the product decompositions of groups in class $\mathcal{M}_{III}(m)$.
    \begin{proposition}
        The group $\mathcal{M}_{III, n}(m)$ can be decomposed as a product of proper subgroups as follows.
        \[
            \langle{p, q, r, s}\rangle \simeq
            \begin{cases}
                \langle{p, r, s}\rangle \times \langle{q}\rangle & \text{ if } n = 1, 2, \dots, 9, \\
                \langle{p, q, r}\rangle\langle{s}\rangle & \text{ if } n = 10, 11, \\
                \langle{p, q, r}\rangle \rtimes \langle{s}\rangle & \text{ if } n = 12, 13, 14. \\
            \end{cases}
        \]
        \label{prop:M_III,productdecomposition}
    \end{proposition}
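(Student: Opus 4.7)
The plan is to identify in each case a normal subgroup of index $2$ together with a complementary (or nearly complementary) cyclic subgroup generated by the remaining generator, and then read off the type of product from the defining relations.

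First, I would invoke \textbf{Proposition \ref{prop:M_III(m),N_II(m-1)rels}} to pin down the order of the relevant rank-$3$ subgroup. For $n\in\{1,\dots,9\}$, the proposition identifies $\langle p,r,s\rangle$ with $\mathcal{N}_{II,n}(m-1)$, hence $|\langle p,r,s\rangle|=2^{m-1}$; for $n\in\{10,\dots,14\}$ it identifies $\langle p,q,r\rangle$ with the corresponding $\mathcal{N}_{II,n'}(m-1)$, again of order $2^{m-1}$. In either case the identified subgroup has index $2$ in $\mathcal{M}_{III,n}(m)$, hence is normal, and the remaining generator ($q$ or $s$, respectively) lies outside it (otherwise the rank would drop to $3$, contradicting the rank computation made just before the proposition). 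Thus, as a set product, $\mathcal{M}_{III,n}(m)=\langle p,r,s\rangle\langle q\rangle$ when $n\le 9$ and $\mathcal{M}_{III,n}(m)=\langle p,q,r\rangle\langle s\rangle$ when $n\ge 10$.

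Next, I would turn to the \emph{type} of decomposition by inspecting the defining relations and the parameter tables in \textbf{\ref{app:params}}. For $n=1,\dots,9$, I would check that the tables give $e_4=e_7=0$ (and that $q^2=1$), so that $r^{-1}qr=q$ and $s^{-1}qs=q$; combined with $q^{-1}pq=p$ this makes $q$ central in $\mathcal{M}_{III,n}(m)$. Centrality together with $q^2=1$ and $q\notin\langle p,r,s\rangle$ forces $\langle q\rangle\cap\langle p,r,s\rangle=\{1\}$, so the product is direct. For $n=10,11,12,13,14$, centrality of $q$ fails and one only needs normality of $\langle p,q,r\rangle$, which was already noted; the last relations $s^{-1}ps,s^{-1}qs,s^{-1}rs\in\langle p,q,r\rangle$ confirm this normality directly from the presentation, so the product is at least an internal (non-direct) product.

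Finally, I would distinguish $n=10,11$ from $n=12,13,14$ via the parameter $e_5$ in $s^2=p^{2^{m-4}e_5}$. From the tables, $e_5\neq 0$ for $n=10,11$, so $s^2=p^{2^{m-4}}$ is a nontrivial element of $\langle p\rangle\subseteq\langle p,q,r\rangle$; the intersection $\langle p,q,r\rangle\cap\langle s\rangle=\langle s^2\rangle$ is therefore nontrivial, ruling out a semidirect decomposition and leaving only the plain internal product. For $n=12,13,14$, the tables give $e_5=0$, hence $s^2=1$, and together with $s\notin\langle p,q,r\rangle$ this yields $\langle p,q,r\rangle\cap\langle s\rangle=\{1\}$, giving a semidirect product with $\langle s\rangle$ acting on the normal factor via the relations.

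The routine part is extracting the correct values of $e_4,e_5,e_7$ from the appendix and verifying the centrality claim for $n\le 9$; the main (still minor) obstacle is bookkeeping across the three sub-cases to confirm that no additional relation forces extra identifications between $\langle s\rangle$ and $\langle p,q,r\rangle$ beyond those coming from $s^2$.
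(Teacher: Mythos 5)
Your plan is correct, and in fact the paper states this proposition without any proof at all, so you are supplying the argument rather than paralleling one; the route you take (index-$2$ hence normal rank-$3$ subgroup from \textbf{Proposition \ref{prop:M_III(m),N_II(m-1)rels}}, the outside generator, and then the parameters deciding centrality of $q$ and triviality of $s^2$) is the natural one and matches what the author evidently intends. One small correction: your claim that the tables give $e_4=e_7=0$ for $n\le 9$ is false for $n=1,2,3$, where $e_7=1$; but $e_7$ only enters the relation $s^{-1}rs=rp^{2^{m-4}e_7}$ and has nothing to do with whether $q$ is central, so the only condition you actually need, $e_4=0$ (giving $s^{-1}qs=q$, with $q^{-1}pq=p$ and $r^{-1}qr=q$ holding unconditionally), does hold for all $n\le 9$ and your conclusion stands. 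The remaining steps check out against \textbf{Table \ref{tbl:MIII,params}}: $e_5=1$ for $n=10,11$ makes $s^2=p^{2^{m-4}}$ a nontrivial element of $\langle p,q,r\rangle$, so $\langle s\rangle$ is not a complement, while $e_5=0$ for $n=12,13,14$ gives $s^2=1$ and a genuine semidirect complement.
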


    \subsection{$\text{inv}(\mathcal{G})$}
    \label{subsec:invsG}

    Let $\mathcal{G}$ be a group in class $\mathcal{N}_{II}(m)$, $\mathcal{M}_{II}(m)$, or $\mathcal{M}_{III}(m)$. As can be observed from the presentation of $\mathcal{G}$, each element of the group can be expressed uniquely in the form $p^{\bar{i}}q^{\bar{j}}r^{\bar{k}}s^{\bar{l}}$ subject to the following restrictions on $\bar{i}, \bar{j}, \bar{k}, \bar{l}$:
    \[
        0 \leq \bar{i} < \text{ord}(p),
        \;\;\;\;\;
        0 \leq \bar{j} < \max_{0 \leq j' < \text{ord}(q)}\{j' \mid q^u \notin \langle{p}\rangle \text{ for all } u < j'\},
    \]
    \[
        0 \leq \bar{k} < \max_{0 \leq k' < \text{ord}(r)}\{k' \mid r^u \notin \langle{p, q}\rangle \text{ for all } u < k'\},
    \]
    \[
        0 \leq \bar{l} < \max_{0 \leq l' < \text{ord}(s)}\{l' \mid s^u \notin \langle{p, q, r}\rangle \text{ for all } u < l'\}
    \]
    with $\bar{l} = 0$ whenever $\text{rank}(\mathcal{G}) = 3$.

    To find the set $\text{inv}(\mathcal{G})$ of involutions of $\mathcal{G}$, we solve for $i \in \mathbf{Z}_{\text{ord}(p)}$, $j \in \mathbf{Z}_{\text{ord}(q)}$, $k \in \mathbf{Z}_{\text{ord}(r)}$, and $l \in \mathbf{Z}_{\text{ord}(s)}$ the equation $(p^iq^jr^ks^l)^2 = 1$, which can be reexpressed uniquely in the form $p^{\bar{i}}q^{\bar{j}}r^{\bar{k}}s^{\bar{l}} = 1$. Solving the latter equation is then equivalent to solving the system
    \[
        \begin{cases}
            \bar{i} \equiv 0 \pmod{\text{ord}(p)}, \;\;\;\; \bar{j} \equiv 0 \pmod{\text{ord}(q)} \\
            \bar{k} \equiv 0 \pmod{\text{ord}(r)}, \;\;\;\; \bar{l} \equiv 0 \pmod{\text{ord}(s)}
        \end{cases}.
    \]
    of congruence equations.

    From these solutions, we arrive at the set of involutions for each group in class $\mathcal{N}_{II}(m)$ in \textbf{Table \ref{tbl:NII,invslist}} and each group in classes $\mathcal{M}_{II}(m)$ and $\mathcal{M}_{III}(m)$ in \textbf{Table \ref{tbl:MII,MIII,invslist}}. In these tables, we classify each involution as \textit{central} (commuting with every element of the group) or \textit{non-central} (non-commuting with at least one element of the group) and provide the total count of all involutions.

    To show that most groups in these classes are not generated by involutions, we use \textbf{Corollary \ref{cor:notgenbyinvs}}. For each group $\mathcal{G}$ that cannot be generated by involutions, we provided in the last column of \textbf{Table \ref{tbl:NII,invslist}} or \textbf{Table \ref{tbl:MII,MIII,invslist}}, a homomorphism $\varphi_X$ defined in \textbf{Proposition \ref{prop:2-groupHom}} that satisfies $\varphi_X(\langle{\text{inv}(\mathcal{G})}\rangle) = \{1\}$. It is easy to see that each remaining group not provided with a homomorphism is generated by involutions by noting that the generators $p$, $q$, $r$, and $s$ (if the group belongs to class $\mathcal{M}_{III}(m)$) can actually be obtained as a product of involutions.

    Therefore, we have the following theorem, which classifies the groups generated by involutions from the classes $\mathcal{N}_{II}(m)$, $\mathcal{M}_{II}(m)$, and $\mathcal{M}_{III}(m)$.
    \begin{proposition}
        The set $\Gamma$ defined below contains the groups from classes $\mathcal{N}_{II}(m)$, $\mathcal{M}_{II}(m)$, and $\mathcal{M}_{III}(m)$ which are generated by involutions. In this set, three groups have exponent $2^{m - 2}$ for $m \geq 6$ (all have rank 3) and nineteen have exponent $2^{m - 3}$ for $m \geq 7$. Among those with exponent $2^{m - 3}$, twelve have rank 3 and seven have rank 4.

        \[
            \Gamma =
            \left\{
                \begin{tabular}{llll}
                    $\mathcal{N}_{II, 3}(m)$, & $\mathcal{N}_{II, 4}(m)$, & $\mathcal{N}_{II, 5}(m)$, & $\mathcal{M}_{II-A, 39}(m)$, \\
                    $\mathcal{M}_{II-A, 40}(m)$, & $\mathcal{M}_{II-B, 19}(m)$, & $\mathcal{M}_{II-B, 20}(m)$, & $\mathcal{M}_{II-C, 5}(m)$, \\
                    $\mathcal{M}_{II-D, 3}(m)$, & $\mathcal{M}_{II-D, 17}(m)$, & $\mathcal{M}_{II-D, 18}(m)$, & $\mathcal{M}_{II-D, 19}(m)$, \\
                    $\mathcal{M}_{II-E, 1}(m)$, & $\mathcal{M}_{II-F, 8}(m)$, & $\mathcal{M}_{II-F, 9}(m)$, & $\mathcal{M}_{III, 3}(m)$, \\
                    $\mathcal{M}_{III, 4}(m)$, & $\mathcal{M}_{III, 5}(m)$, & $\mathcal{M}_{III, 10}(m)$, & $\mathcal{M}_{III, 11}(m)$, \\
                    $\mathcal{M}_{III, 13}(m)$, & $\mathcal{M}_{III, 14}(m)$ & &
                \end{tabular}
                \right\}.
        \]
        \label{prop:groupsGeneratedByInvs}
    \end{proposition}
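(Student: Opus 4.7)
The plan is a bifurcated, case-by-case analysis over the three classes, using the involution tables \textbf{Table \ref{tbl:NII,invslist}} and \textbf{Table \ref{tbl:MII,MIII,invslist}} already produced in this section. For every group from $\mathcal{N}_{II}(m)$, $\mathcal{M}_{II}(m)$, or $\mathcal{M}_{III}(m)$, I would decide membership in $\Gamma$ by one of two complementary arguments: either exhibit involutions whose products recover a known minimal generating set (membership), or produce a homomorphism of the form described in \textbf{Proposition \ref{prop:2-groupHom}} that kills every involution (non-membership).

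For the non-membership direction, I would run through each excluded group $\mathcal{G}$ and, by inspecting its row in the relevant involutions table, identify a nonempty subset $X$ of the minimal generating set such that every listed involution $p^{\bar{i}} q^{\bar{j}} r^{\bar{k}} s^{\bar{l}}$ has $\sum_{x \in X} (\text{exponent of } x) \equiv 0 \pmod{2}$. The map $\varphi_X : \mathcal{G} \to \mathbf{Z}_2$ of \textbf{Proposition \ref{prop:2-groupHom}} then sends every involution to $1$, so $\varphi_X(\langle \text{inv}(\mathcal{G})\rangle) = \{1\}$, and \textbf{Corollary \ref{cor:notgenbyinvs}} forces $\mathcal{G} \neq \langle \text{inv}(\mathcal{G})\rangle$. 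Because the specific $X$ for each excluded group is recorded in the last column of the tables, this step reduces to verifying that the tabulated $\varphi_X$ does indeed vanish on every involution in the same row.

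For the membership direction, I would proceed group-by-group through the twenty-two candidates constituting $\Gamma$. In most cases at least one of $p, q, r$ (or $s$) is already an involution by the defining relations (e.g.\ $q^2 = 1$ or $r^2 = 1$), so only the remaining generators need to be produced. Those generators will be written as products of two or three involutions read off the appropriate table entry: typically one combines a noncentral involution of the shape $q^{\bar j} r^{\bar k} \cdot p^{\bar i}$ with another involution that cancels the unwanted factors, thereby recovering $p$, and then $q$ follows analogously. Since Sections \ref{subsec:ClassN_II}--\ref{subsec:ClassM_III} have already identified a minimal generating set of size $3$ or $4$, recovering these explicit generators suffices for $\langle \text{inv}(\mathcal{G})\rangle = \mathcal{G}$.

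The main obstacle is not conceptual but logistical: there are well over one hundred parameter-dependent groups to dispatch, and the correct obstructing subset $X$, or the correct factorization of a generator into involutions, varies delicately with the tuples $(e_1, \ldots, e_k)$. The rank-$4$ groups in $\mathcal{M}_{III}(m)$ demand the most care, since $X$ admits more candidate subsets and one must consult the product decompositions of \textbf{Proposition \ref{prop:M_III,productdecomposition}} to ensure that the exhibited involutions span the full group rather than a proper distinguished subgroup.
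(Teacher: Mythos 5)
Your proposal matches the paper's own argument: the paper likewise reads off $\text{inv}(\mathcal{G})$ from \textbf{Tables \ref{tbl:NII,invslist}} and \textbf{\ref{tbl:MII,MIII,invslist}}, excludes a group by exhibiting the tabulated $\varphi_X$ with $\varphi_X(\langle{\text{inv}(\mathcal{G})}\rangle) = \{1\}$ and invoking \textbf{Corollary \ref{cor:notgenbyinvs}}, and includes a group by writing each of $p, q, r$ (and $s$) as a product of listed involutions. The approach and the supporting results you cite are essentially identical to the paper's, so no further comparison is needed.
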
 
    
    \section{Classification of string C-groups of order $2^m$ and exponent at least $2^{m - 3}$}
\label{sec:stringC}

    The prime objective of this section is to prove the following theorem, which classifies the string C-groups of order $2^m$ and exponent at least $2^{m - 3}$. The corresponding diagrams of these string C-groups are provided in \textbf{Figure~\ref{fig:classificationdiagrams}}.

    \begin{theorem}
        Let $\mathcal{G}$ be a string C-group of order $2^m$.
        \begin{enumerate}
            \item If $\text{exp}(\mathcal{G}) = 2^m$, then $m = 1$ and $\mathcal{G} \simeq \mathbf{Z}_2$.
            \item If $\text{exp}(\mathcal{G}) = 2^{m - 1}$ with $m \geq 2$, then $\mathcal{G} \simeq \mathbf{D}_{2^{m - 1}}$.
            \item If $\text{exp}(\mathcal{G}) = 2^{m - 2}$ with $m \geq 3$, then $\mathcal{G}$ is disconnected and $\mathcal{G} \simeq \mathbf{D}_{2^{m - 2}} \times \mathbf{Z}_2$.
            \item If $\mathcal{G}$ is a disconnected string C-group with $\text{exp}(\mathcal{G}) = 2^{m - 3}$ for $m \geq 4$, then $\mathcal{G} \simeq \mathbf{D}_{2^{m - 3}} \times \mathbf{Z}_2 \times \mathbf{Z}_2$.
            \item If $\mathcal{G}$ is a connected string C-group with $\text{exp}(\mathcal{G}) = 2^{m - 3}$, then
                \[
                    \renewcommand*{\arraystretch}{0.8}
                    \mathcal{G} \simeq
                    \Bigg\langle
                    s_0, s_1, s_2 \; \Bigg\mid
                    \begin{array}{cc}
                        s_0^2 = 1, \;
                        s_1^2 = 1, \;
                        s_2^2 = 1, \;
                        (s_0s_1)^4 = 1, \;
                        (s_0s_2)^2 = 1, \\
                        (s_1s_2)^{2^{m - 3}} = 1, \;
                        (s_0s_1s_2s_1)^2 \cdot (s_2s_1)^{2^{m - 4}e} = 1 \\
                    \end{array}
                    \Bigg\rangle,
                \]
                where $e = 0$ for $m = 5$ and $e = 0, 1$ for $m \geq 6$. Furthermore, $\mathcal{G}$ is a smooth quotient of the infinite string Coxeter group $W = [4, 2^{m - 3}]$ and is isomorphic to $ (\mathbf{D}_{2^{m - 4}} \rtimes \mathbf{D}_2) \rtimes \mathbf{Z}_2$ or to $(\mathbf{D}_{2^{m - 4}} \times \mathbf{D}_2) \rtimes \mathbf{Z}_2$ according as $e = 0 \text{ or } 1$, respectively.
        \end{enumerate}
        \label{thm:onlyStringCGroups}
    \end{theorem}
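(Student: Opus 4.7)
The plan is to dispatch items (1) and (2) immediately. If $\text{exp}(\mathcal{G}) = 2^m$, then $\mathcal{G}$ is cyclic of order $2^m$, and a cyclic $2$-group is generated by involutions only when it is $\mathbf{Z}_2$; hence $m = 1$. For (2), an element of order $2^{m-1}$ gives $[\mathcal{G} : \langle p \rangle] = 2$, so $\text{rank}(\mathcal{G}) \leq 2$; Burnside's basis theorem then implies that a minimal generating set of involutions has size $2$, and two involutions generate a dihedral group whose order count forces $\mathcal{G} \simeq \mathbf{D}_{2^{m-1}}$. Since any rank-$2$ string C-group is dihedral of order $2\cdot\text{exp}(\mathcal{G})$, no rank-$2$ group of exponent $2^{m-2}$ or $2^{m-3}$ can have order $2^m$, so items (3)--(5) are entirely rank-$\geq 3$ problems.

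For items (3)--(5), I would restrict attention to the set $\Gamma$ from Proposition \ref{prop:groupsGeneratedByInvs}: any string C-group of order $2^m$ and exponent $2^{m-2}$ or $2^{m-3}$ with rank at least $3$ must be generated by involutions and therefore appears in $\Gamma$. The list partitions into the three rank-$3$ groups of exponent $2^{m-2}$, the twelve rank-$3$ groups of exponent $2^{m-3}$, and the seven rank-$4$ groups of exponent $2^{m-3}$ sitting in class $\mathcal{M}_{III}(m)$. For each rank-$3$ candidate $\mathcal{G}$, I would use the enumerations in Tables \ref{tbl:NII,invslist} and \ref{tbl:MII,MIII,invslist} to scan through triples $\{s_0, s_1, s_2\} \subseteq \text{inv}(\mathcal{G})$ satisfying the string condition $[s_0, s_2] = 1$ and generating $\mathcal{G}$, then apply Lemma \ref{lem:rank3intcond} to verify the single intersection identity $\langle s_0, s_1 \rangle \cap \langle s_1, s_2 \rangle = \langle s_1 \rangle$. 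For each rank-$4$ candidate I would enumerate quadruples satisfying $[s_0,s_2] = [s_0,s_3] = [s_1,s_3] = 1$ and verify the intersection condition via the Conder--Oliveros inductive reduction, leveraging Propositions \ref{prop:M_III(m),N_II(m-1)rels} and \ref{prop:M_III,productdecomposition} to reuse rank-$3$ conclusions on the subgroup $\langle p, r, s \rangle$.

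The case analysis will show that exponent $2^{m-2}$ produces only $\mathbf{D}_{2^{m-2}} \times \mathbf{Z}_2$, with the third distinguished generator forced into a central $\mathbf{Z}_2$ and the diagram therefore disconnected, establishing (3); and that every rank-$4$ survivor of exponent $2^{m-3}$ realizes $\mathbf{D}_{2^{m-3}} \times \mathbf{Z}_2 \times \mathbf{Z}_2$, giving (4). The main obstacle is item (5): among the twelve rank-$3$ groups of exponent $2^{m-3}$, I must isolate the exactly two isomorphism types whose distinguished generating triples produce a connected diagram; verify that in each case $\text{ord}(s_0s_1) = 4$, $\text{ord}(s_1s_2) = 2^{m-3}$, and $\text{ord}(s_0s_2) = 2$, so that $\mathcal{G}$ is a smooth quotient of $W = [4, 2^{m-3}]$; and pin down the single extra relation $(s_0s_1s_2s_1)^2 \cdot (s_2s_1)^{2^{m-4}e} = 1$ that distinguishes the $e = 0$ type from the $e = 1$ type. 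The semidirect product descriptions $(\mathbf{D}_{2^{m-4}} \rtimes \mathbf{D}_2) \rtimes \mathbf{Z}_2$ and $(\mathbf{D}_{2^{m-4}} \times \mathbf{D}_2) \rtimes \mathbf{Z}_2$ will then be read off by exhibiting the three internal factors directly in the presentation and checking the required conjugation actions.
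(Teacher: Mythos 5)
Your outline of items (1) and (2), and the reduction of items (3)--(5) to the rank-$\geq 3$ groups generated by involutions (the set $\Gamma$ of Proposition \ref{prop:groupsGeneratedByInvs}), matches the paper. The positive half of item (5) -- exhibiting explicit involutions in $\mathcal{M}_{II-D,3}(m)$ and $\mathcal{M}_{II-D,19}(m)$, checking the orders $4$, $2^{m-3}$, $2$, applying Lemma \ref{lem:rank3intcond}, and reading off the product decompositions -- is also essentially what Proposition \ref{prop:M_II-D,3,19,StringCGroups} does. The genuine gap is in the negative half: you propose to ``scan through triples $\{s_0,s_1,s_2\} \subseteq \text{inv}(\mathcal{G})$ \dots then apply Lemma \ref{lem:rank3intcond}.'' But Lemma \ref{lem:rank3intcond} only gives a \emph{sufficient} condition for the intersection condition; to eliminate the other twenty groups in $\Gamma$ you must show that \emph{every} admissible triple fails it, and the sets of involutions have cardinality growing like $2^{m-2}$ or $2^{m-1}$ (see Tables \ref{tbl:NII,invslist} and \ref{tbl:MII,MIII,invslist}), so this is not a finite check -- it is an infinite family of verifications parametrized by $m$. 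Your proposal contains no mechanism for making these verifications uniform in $m$. The paper supplies exactly this mechanism: Lemmas \ref{lem:provingNotStringC1} and \ref{lem:provingNotStringC2} convert failure of the intersection condition into solvability of equations $(s_0s_1)^{2x_1}=(s_1s_2)^{2x_2}=\zeta$ for a central involution $\zeta$ (or the identity $s_1^{s_0}=s_1^{s_2}$), Lemma \ref{lem:gexsoln} settles that solvability for all $m$ at once, and the observation that the hypotheses only depend on involutions modulo $Z(\mathcal{G})$ collapses the infinite list of triples to the finite tables of coset representatives in Appendix C. Without these (or equivalent) devices, your case analysis in Proposition \ref{prop:Gamma-M_II-D,3,19,notstringC}'s role cannot be completed.

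Two smaller points. For the disconnected cases (3) and (4) the paper does not scan candidates at all: it uses the fact that a disconnected string C-group is the direct product of the distinguished subgroups attached to the components of $\mathcal{D}(S)$, which immediately forces $\mathbf{D}_{p_{0,1}}\times\mathbf{Z}_2$ or $\mathbf{D}_{p_{0,1}}\times\mathbf{D}_{p_{2,3}}$ or $\mathcal{H}\times\mathbf{Z}_2$, and then order/exponent counting pins down the parameters; your quadruple-scan through $\mathcal{M}_{III}(m)$ is far more laborious and again runs into the uniformity problem. Finally, the reduction to $\Gamma$ only covers $m\geq 6$ (exponent $2^{m-2}$) and $m\geq 7$ (exponent $2^{m-3}$); the small values of $m$, including the $m=5$ case that produces the $e=0$ group of type $\{4,4\}$ in item (5), are handled separately in the paper by the GAP/Atlas search, and your proposal does not account for them.
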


    \begin{figure}[H]
        \centering
        \includegraphics[scale = 0.35, keepaspectratio = true]{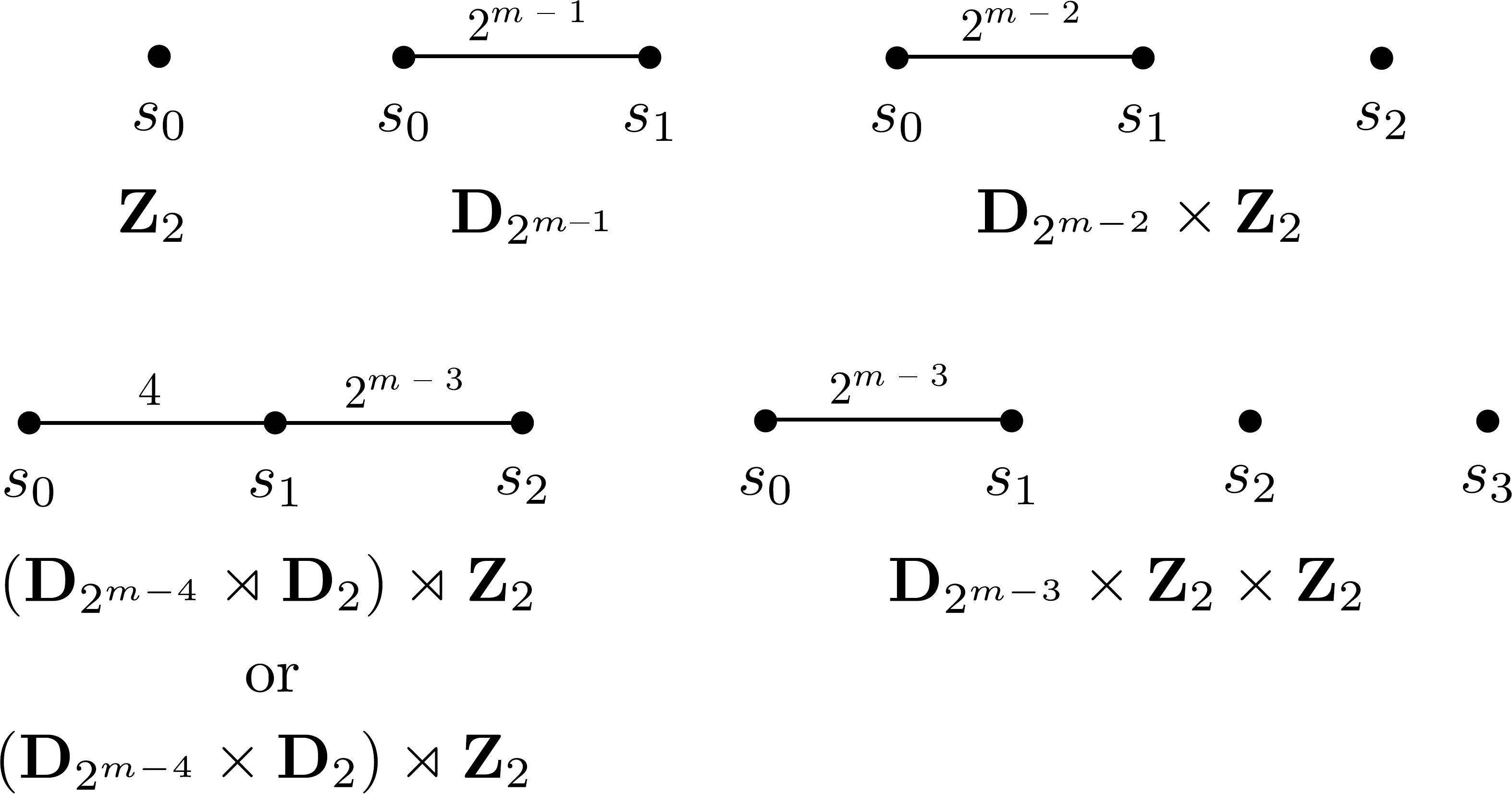}
        \caption{Diagrams of string C-groups in \textbf{Theorem \ref{thm:onlyStringCGroups}}.}
        \label{fig:classificationdiagrams}
    \end{figure}

    We first give a proof of the easier statements of \textbf{Theorem \ref{thm:onlyStringCGroups}}.
    \begin{proof}[Partial Proof of Theorem~\ref{thm:onlyStringCGroups}]
        Let $\mathcal{G}$ be a string C-group of order $2^m$.
        \begin{enumerate}
            \item \textit{If $\text{exp}(\mathcal{G}) = 2^m$, then $m = 1$ and $\mathcal{G} \simeq \mathbf{Z}_2$.} Let $\text{exp}(\mathcal{G}) = 2^m$, then $\mathcal{G}$ is a cyclic group isomorphic to $\mathbf{Z}_{2^m}$. Since the only cyclic group generated by an involution is $\mathbf{Z}_2$, we must have $m = 1$ and $\mathcal{G} \simeq \mathbf{Z}_2$.

            \item \textit{If $\text{exp}(\mathcal{G}) = 2^{m - 1}$ with $m \geq 2$, then $\mathcal{G} \simeq \mathbf{D}_{2^{m - 1}}$.} Let $\text{exp}(\mathcal{G}) = 2^{m - 1}$  with $m \geq 2$. Then $\mathcal{G}$ must have rank greater than 1. Let $p$ be an element of $\mathcal{G}$ with order $2^{m - 1}$. Then $\mathcal{G} = \langle{p}\rangle \cup q\langle{p}\rangle$ for some $q \notin \langle{p}\rangle$. It follows that $\mathcal{G}$ has rank 2. Since string C-groups of rank 2 are dihedral, we must have $\mathcal{G} \simeq \mathbf{D}_{2^{m - 1}}$.

            \item \textit{If $\mathcal{G}$ is a disconnected string C-group with $\text{exp}(\mathcal{G}) = 2^{m - 2}$ for $m \geq 3$, then $\mathcal{G} \simeq \mathbf{D}_{2^{m - 2}} \times \mathbf{Z}_2$.} Let $\text{exp}(\mathcal{G}) = 2^{m - 2}$ with $m \geq 3$ be disconnected. Then, from \textbf{Section \ref{sec:allgroups}}, we have $\text{rank}(\mathcal{G}) = 2 \text{ or } 3$. The only disconnected string C-group of rank 2 is isomorphic to $\mathbf{D}_2 \simeq \mathbf{Z}_2 \times \mathbf{Z}_2$, which has order $2^2$ and exponent $2^{2 - 1}$. Thus, $\text{rank}(\mathcal{G}) \neq 2$. On the other hand, the only disconnected string C-group of rank 3 is isomorphic to $\mathbf{D}_{p_{0, 1}} \times \mathbf{Z}_2$ with $p_{0, 1} \geq 2$. The diagram for this group is shown below.
                \begin{figure}[H]
                    \centering
                    \includegraphics[scale = 0.30, keepaspectratio = true]{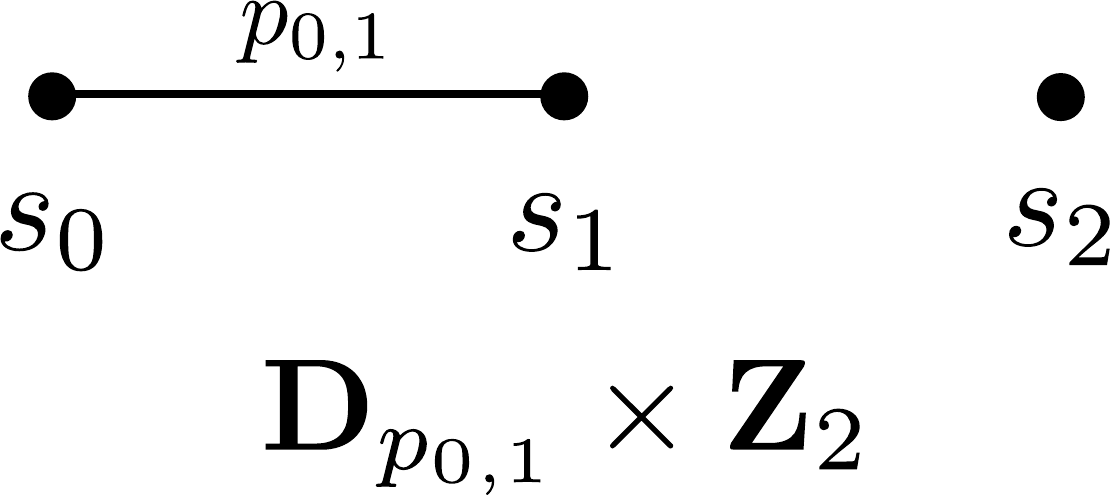}
                \end{figure}
                \noindent Note that the edge connecting the nodes corresponding to $s_0$ and $s_1$ must be deleted if $p_{0, 1} = 2$. If $\mathcal{G}$ were to have this diagram, we must have $p_{0, 1} = 2^{m - 2}$ so that $\text{ord}(\mathcal{G}) = 2^m$ and $\text{exp}(\mathcal{G}) \simeq 2^{m - 2}$. Hence, $\mathcal{G} \simeq \mathbf{D}_{2^{m - 2}} \times \mathbf{Z}_2$.

            \item \textit{If $\mathcal{G}$ is a disconnected string C-group with $\text{exp}(\mathcal{G}) = 2^{m - 3}$ for $m \geq 4$, then $\mathcal{G} \simeq \mathbf{D}_{2^{m - 3}} \times \mathbf{Z}_2 \times \mathbf{Z}_2$.} Let $\text{exp}(\mathcal{G}) = 2^{m - 3}$ with $m \geq 4$ be disconnected. Then, from \textbf{Section \ref{sec:allgroups}}, we have that $\text{rank}(\mathcal{G}) = 2, 3, \text{ or } 4$. An argument similar to the one used earlier in the case when $\text{exp}(\mathcal{G}) = 2^{m - 2}$ can be used in this case to show that $\text{rank}(\mathcal{G}) \neq 2$. Similarly, the only disconnected group of rank 3 is isomorphic to $\mathbf{D}_{p_{0, 1}} \times \mathbf{Z}_2$. If this group has exponent $2^{m - 3}$, then $p_{0, 1} = 2^{m - 3}$. However, this will result to a group of order $2^{m - 1}$ only. Thus, $\text{rank}(\mathcal{G}) \neq 3$. Now, a disconnected string C-group of rank 4 is isomorphic to either $\mathbf{D}_{p_{0, 1}} \times \mathbf{D}_{p_{2, 3}}$ or $\mathcal{H} \times \mathbf{Z}_2$, where $\mathcal{H}$ is a string C-group of order $2^{m - 1}$. The diagrams for these groups are shown below.
                \begin{figure}[H]
                    \centering
                    \includegraphics[scale = 0.30, keepaspectratio = true]{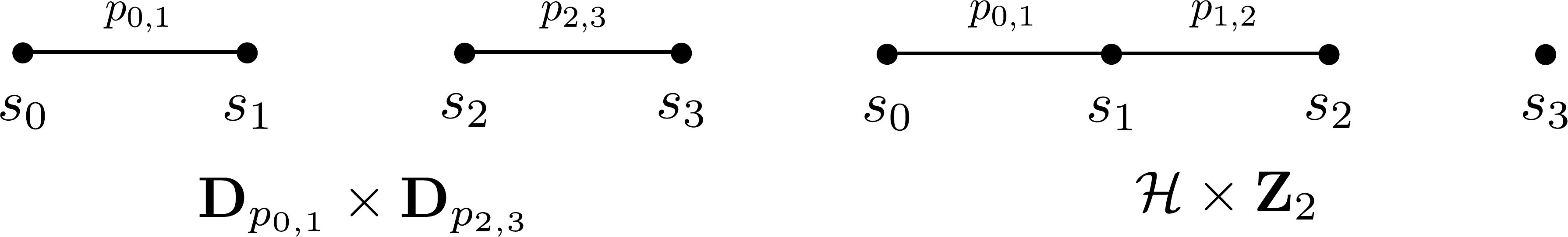}
                \end{figure}
                \noindent Note that the edge connecting the nodes corresponding to $s_j$ and $s_{j + 1}$ must be deleted if $p_{j, j + 1} = 2$. If $\mathcal{G}$ were to have the first diagram, we must have (up to duality) $p_{0, 1} = 2^{m - 3}$ and $p_{2, 3} = 2$ so that $\text{ord}(\mathcal{G}) = 2^m$ and $\text{exp}(\mathcal{G}) \simeq 2^{m - 3}$. If $\mathcal{G}$, on the other hand, were to have the second diagram, then $\mathcal{H}$ must have exponent $2^{m - 3}$. Thus, $\mathcal{H}$ is a group of order $2^{m'}$ and exponent $2^{{m'} - 2}$, where $m' = m - 1$. As will be shown in \textbf{Section~\ref{sec:stringC}}, $\mathcal{H}$ must be disconnected and, hence, from the previous result,  $ \mathcal{H} \simeq \mathbf{D}_{2^{m - 3}} \times \mathbf{Z}_2$. Thus, in either case, $\mathcal{G} \simeq \mathbf{D}_{2^{m - 3}} \times \mathbf{Z}_2 \times \mathbf{Z}_2$.
        \end{enumerate}
    \end{proof}

    To complete the proof of \textbf{Theorem \ref{thm:onlyStringCGroups}}, it remains to show that only the groups $\mathcal{M}_{II-D, 3}(m)$ and $\mathcal{M}_{II-D, 19}(m)$ in the set $\Gamma$ defined in the previous section are connected string C-groups and that both have type $\{4, 2^{m - 3}\}$. The proof combines the arguments in the proofs of two propositions found in the next two subsections. In the first subsection, we show that these two groups are indeed connected string C-groups of type $\{4, 2^{m - 3}\}$. In addition, we also give descriptions of their structures as products of proper subgroups (see \textbf{Proposition \ref{prop:M_II-D,3,19,StringCGroups}}). In the second subsection, we show that all the other remaining groups in the set $\Gamma$ (see \textbf{Proposition \ref{prop:groupsGeneratedByInvs}}) are not connected string C-groups (see \textbf{Proposition \ref{prop:Gamma-M_II-D,3,19,notstringC}}).

    \subsection{Completing the proof of \textbf{Theorem \ref{thm:onlyStringCGroups}}, Part 1}
    \label{subsec:M_II-D,3,19}

    Recall the group presentation \[
        \mathcal{M}_{II-D, n}(m) =
        \Bigg\langle
        p, q, r \; \Bigg\mid
        \begin{array}{cc}
            p^{2^{m - 3}} = 1, \;
            q^4 = 1, \;
            r^2 = 1, \;
            q^{-1}pq = q^2p^{-1}, \\
            q^{-2}pq^2 = p^{1 + 2^{m - 4}e_6}, \;
            r^{-1}pr = q^2p, \;
            r^{-1}qr = q^3
        \end{array}
        \Bigg\rangle,
    \]
    where $e_6 = 1$ if $n = 3$ and $e_6 = 0$ if $n = 19$, from \textbf{Section \ref{subsec:ClassM_II}}. Using the relations in the presentation, we can prove the following.
    \begin{proposition}
        $ $
        \begin{enumerate}
            \item The groups $\mathcal{M}_{II-D, 3}(m)$ and $\mathcal{M}_{II-D, 19}(m)$ are connected string C-groups of type $\{4, 2^{m - 3}\}$ with respect to the involutions $s_0 = r, s_1 = q^3r, s_2 = p^{-1}q^3r$.

            \item We have the product decompositions $\mathcal{M}_{II-D, 3}(m) \simeq (\mathbf{D}_{2^{m - 4}} \rtimes \mathbf{D}_2) \rtimes \mathbf{Z}_2$ and $\mathcal{M}_{II-D, 19}(m) \simeq (\mathbf{D}_{2^{m - 4}} \times \mathbf{D}_2) \rtimes \mathbf{Z}_2$.

            \item We have the alternative presentations
                \[
                    \mathcal{M}_{II-D, n}(m) \simeq
                    \Bigg\langle
                    s_0, s_1, s_2 \; \Bigg\mid
                    \begin{array}{cc}
                        s_0^2 = 1, \;
                        s_1^2 = 1, \;
                        s_2^2 = 1, \;
                        (s_0s_1)^4 = 1, \;
                        (s_0s_2)^2 = 1, \\
                        (s_1s_2)^{2^{m - 3}} = 1, \;
                        (s_0s_1s_2s_1)^2 \cdot (s_2s_1)^{2^{m - 4}e_6} = 1 \\
                    \end{array}
                    \Bigg\rangle,
                \]
                where $e_6 = 1$ if $n = 3$ and $e_6 = 0$ if $n = 19$. Thus, if $W = [4, 2^{m - 3}]$ is the Coxeter group with distinguished generators $w_0, w_1, w_2$, then
                \[ \mathcal{M}_{II-D, n}(m) \simeq W/\langle{(w_0w_1w_2w_1)^2 \cdot (w_2w_1)^{2^{m - 4}e_6}}\rangle^W. \]
            \end{enumerate}
            \label{prop:M_II-D,3,19,StringCGroups}
    \end{proposition}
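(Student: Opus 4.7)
The plan is to extract everything from three auxiliary identities derivable from the defining relations of $\mathcal{M}_{II-D, n}(m)$: the consequence $q^{-1}p^{-1} = pq$ of $q^{-1}pq = q^2p^{-1}$ (which in particular shows that $pq$ is an involution), the conjugation rule $rp^{-1} = p^{-1}q^2r$ obtained from $r^{-1}pr = q^2p$ together with $r^2 = 1$, and the original twist $q^{-2}pq^2 = p^{1+2^{m-4}e_6}$. For Part~1, short cancellations using $rq^3 = qr$ and $pq = q^3p^{-1}$ give $s_0 s_1 = q$, $s_1 s_2 = p$, and $s_0 s_2 = p^{-1}q^3$; the relations $q^4 = p^{2^{m-3}} = 1$ then yield $(s_0 s_1)^4 = (s_1 s_2)^{2^{m-3}} = 1$ with exact orders $4$ and $2^{m-3}$, and $(s_0 s_2)^2 = p^{-1}(q^3 p^{-1})q^3 = p^{-1}(pq)q^3 = q^4 = 1$. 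Thus the diagram is connected of type $\{4, 2^{m-3}\}$, and since $r = s_0$, $q = s_0 s_1$, $p = s_1 s_2$ recover the original generators, $\mathcal{G} = \langle s_0, s_1, s_2\rangle$. The intersection condition then reduces by Lemma~2.4 to $\langle s_0, s_1\rangle \cap \langle s_1, s_2\rangle = \langle s_1\rangle$: comparing canonical forms $p^i q^j r^k$, every element of $\langle s_0, s_1\rangle = \langle q, r\rangle$ has $p$-exponent $0$, while the dihedral group $\langle s_1, s_2\rangle = \langle p, s_1\rangle$ consists of the $p^i$ and the $p^i s_1 = p^i q^3 r$, so the intersection collapses to $\{1, s_1\}$.

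Part~3 follows from the same computations. The first six relations in the alternative presentation are exactly the string-Coxeter relations verified in Part~1, so only $(s_0 s_1 s_2 s_1)^2 (s_2 s_1)^{2^{m-4}e_6} = 1$ needs a fresh check: expanding $s_0 s_1 s_2 s_1 = qp^{-1}$, the derived identity $p^{-1}q = qpq^2$ gives $(qp^{-1})^2 = q^2 p q^2 p^{-1}$, and the twist $pq^2 = q^2 p^{1+2^{m-4}e_6}$ simplifies this to $p^{2^{m-4}e_6}$; multiplication by $(s_2 s_1)^{2^{m-4}e_6} = p^{-2^{m-4}e_6}$ cancels. This yields a surjection $G \twoheadrightarrow \mathcal{M}_{II-D, n}(m)$ from the abstract group $G$ given by the displayed $s_i$-presentation. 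In the reverse direction I would set $\widetilde{r} := s_0$, $\widetilde{q} := s_0 s_1$, $\widetilde{p} := s_1 s_2$ inside $G$ and verify the original defining relations of $\mathcal{M}_{II-D, n}(m)$; the two resulting surjections together with the equality of orders give the isomorphism, and the Coxeter quotient description is then automatic since the first six relations are precisely the defining relations of $W = [4, 2^{m-3}]$.

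For Part~2, the homomorphism $\varphi_{\{p, q\}}$ from Proposition~2.1 identifies $N := \langle p^2, pq, q^2, r\rangle$ as the index-two, hence normal, kernel of $\varphi_{\{p, q\}}$ in $\mathcal{G}$, and the involution $s_1 = q^3 r \notin N$ provides the complement, giving $\mathcal{G} \simeq N \rtimes \langle s_1\rangle$ and handling the outer $\rtimes \mathbf{Z}_2$ in both decompositions. For $n = 19$, direct computation from the relations shows $(pq) p^2 (pq) = (q^2 p^{-1})^2 = p^{-2}$ (so $\langle p^2, pq\rangle \simeq \mathbf{D}_{2^{m-4}}$), $r p^2 r = (q^2 p)^2 = p^2$, and that $pq$ commutes with $q^2$ and with $r$; the subgroups $\langle p^2, pq\rangle$ and $\langle q^2, r\rangle \simeq \mathbf{D}_2$ therefore commute elementwise and intersect trivially, yielding the internal direct product $N \simeq \mathbf{D}_{2^{m-4}} \times \mathbf{D}_2$. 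For $n = 3$ the twist perturbs both $(pq)p^2(pq) = p^{-2-2^{m-4}}$ and $rp^2r = p^{2+2^{m-4}}$ by the central involution $p^{2^{m-4}}$, and the conjugations of $q^2$ and $r$ by $pq$ likewise pick up factors of $p^{2^{m-4}}$; these obstructions prevent a direct product and instead yield the semidirect product $\mathbf{D}_{2^{m-4}} \rtimes \mathbf{D}_2$ inside $N$. The main obstacle lies here: the naive candidate $\langle p^2, pq\rangle$ is semi-dihedral rather than dihedral for $n = 3$, so one must select a different $\mathbf{D}_{2^{m-4}}$ subgroup of $N$ on which a Klein four-group acts by outer automorphisms, and then verify that the composition is exactly $(\mathbf{D}_{2^{m-4}} \rtimes \mathbf{D}_2) \rtimes \mathbf{Z}_2$; an order comparison, combined with the explicit quotient description of $\mathcal{M}_{II-D, 3}(m)$ as a quotient of $W = [4, 2^{m-3}]$ obtained in Part~3, pins down the isomorphism class uniquely.
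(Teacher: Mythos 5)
Parts 1 and 3 of your proposal are essentially the paper's argument: the same generators $s_0=r$, $s_1=q^3r$, $s_2=p^{-1}q^3r$, the same identities $s_0s_1=q$, $s_1s_2=p$, and the same reduction of the intersection condition to $\langle s_0,s_1\rangle\cap\langle s_1,s_2\rangle=\langle s_1\rangle$ via uniqueness of the normal form $p^iq^jr^k$; your verification of the extra relation $(s_0s_1s_2s_1)^2(s_2s_1)^{2^{m-4}e_6}=1$ is correct, and the two-way Tietze argument for the alternative presentation is the standard route (the reverse direction is only sketched, but no more so than in the paper).

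The genuine gap is in Part 2, in the case $n=3$. Your decomposition $\mathcal{G}=N\rtimes\langle s_1\rangle$ with $N=\ker\varphi_{\{p,q\}}$ is fine, and for $n=19$ the internal direct product $N=\langle p^2,pq\rangle\times\langle q^2,r\rangle\cong\mathbf{D}_{2^{m-4}}\times\mathbf{D}_2$ checks out (this is a legitimate variant of the paper's factorization, which uses $\langle s_2,\,s_1s_2s_1\rangle$ and $\langle s_0,\,s_1s_0s_1\rangle=\langle r,q^2\rangle$). But for $n=3$ you correctly observe that $(pq)p^2(pq)=p^{-2-2^{m-4}}$, so $\langle p^2,pq\rangle$ is semidihedral, and then you stop: you never produce the dihedral subgroup of order $2^{m-3}$ that the statement requires, and the proposed substitute — ``an order comparison, combined with the explicit quotient description \dots pins down the isomorphism class uniquely'' — is not an argument. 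The expression $(\mathbf{D}_{2^{m-4}}\rtimes\mathbf{D}_2)\rtimes\mathbf{Z}_2$ does not name a group until the actions are specified, so there is nothing of known order to compare against, and knowing that $\mathcal{M}_{II-D,3}(m)$ is a particular quotient of $W=[4,2^{m-3}]$ does not by itself exhibit the asserted subgroup chain. The repair is concrete and is exactly what the paper does: take $H_2^{s_1}=\langle s_2,\,s_1s_2s_1\rangle=\langle p^2,\,p^{-1}q^3r\rangle$, which \emph{is} dihedral of order $2^{m-3}$ because $s_2(s_1s_2s_1)=(s_2s_1)^2=p^{-2}$ and $s_2$ inverts $p$; then verify by direct computation that $s_0$ and $s_1s_0s_1=q^2r$ normalize $H_2^{s_1}$ (e.g.\ $s_0s_2s_0=s_2$ and $s_0(s_1s_2s_1)s_0=s_1s_2s_1(s_2s_1s_2s_1)^{2^{m-5}}$), that $H_0^{s_1}\cap H_2^{s_1}=1$ by the intersection condition, and that the orders multiply to $2^m$. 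Without exhibiting such a subgroup and its normalization, Part 2 for $\mathcal{M}_{II-D,3}(m)$ is not proved.
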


    \begin{proof}
        Let $s_0 = r$, $s_1 = rq = q^3r$, and $s_2 = q^{-1}rp = p^{-1}q^3r$.
        \begin{enumerate}
            \item The presentations for $\mathcal{M}_{II-D, 3}(m)$ and $\mathcal{M}_{II-D, 19}(m)$ give us $s_0^{-1} = s_0$, $s_1^{-1} = s_1$, and $s_2^{-1} = s_2$. It is easy to see that the $s_i$'s generate $\mathcal{M}_{II-D, n}(m)$ by noting that $p = s_1s_2$, $q = s_0s_1$, and $r = s_0$. In addition, $\mathcal{M}_{II-D, n}(m)$ is a smooth quotient of $W = [4, 2^{m - 3}]$ since $\text{ord}(s_0s_2) = 2$, $\text{ord}(s_0s_1) = 4$, and $\text{ord}(s_1s_2) = 2^{m - 3}$. This shows that $\langle{s_0, s_1, s_2}\rangle$ has the following connected string diagram
                \begin{figure}[H]
                    \centering
                    \includegraphics[scale = 0.35, keepaspectratio = true]{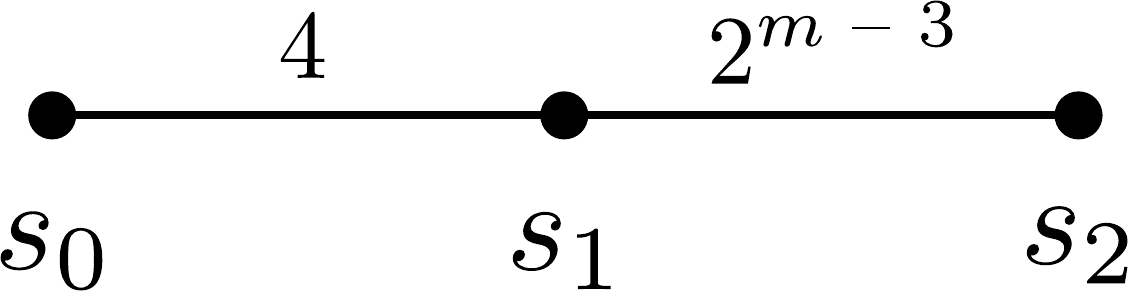}
                \end{figure}
                It remains to show that $\langle{s_0, s_1, s_2}\rangle$ satisfies the intersection condition. Set $H_{0, 1} := \langle{s_0, s_1}\rangle \cong \mathbf{D}_4$ and $H_{1, 2} := \langle{s_1, s_2}\rangle \cong \mathbf{D}_{2^{m - 3}}$. Then every element of $H_{0, 1}$ can be written in the form $(s_0s_1)^{x_1}{s_1}^{x_2} = q^{x_1}(rq)^{x_2}$ and every element of $H_{1, 2}$ can be written in the form $(s_1s_2)^{x_3}{s_1}^{x_4} = p^{x_3}(rq)^{x_4}$, where $0 \leq x_1 \leq 3$, $0 \leq x_3 \leq 2^{m - 3} - 1$, and $x_2, x_4 = 0, 1$. Suppose that for some $x_1, x_2, x_3, x_4$, we have $q^{x_1}(rq)^{x_2} = p^{x_3}(rq)^{x_4}$. If $x_2 - x_4 = 0$, then $q^{x_1} = p^{x_3}$. Since $q^2 \notin \langle{p}\rangle$ and $\text{ord}(q) = 4$, it follows that $x_1, x_3 = 0$. On the other hand, if $|x_2 - x_4| = 1$, then $q^{x_1}rq = q^{x_1}q^3r = q^{x_1 + 3}r = p^{x_3}$, which is equivalent to $p^{-x_3}q^{x_1 + 3} = r$. This cannot happen because $r \notin \langle{p, q}\rangle$. Thus, $H_{0, 1} \cap H_{1, 2} = \langle{s_1}\rangle$. Thus, $\langle{s_0, s_1, s_2}\rangle$ satisfies the intersection condition by \textbf{Lemma \ref{lem:rank3intcond}}.

            \item Let $H_0^{s_1} = \langle{s_0, s_1s_0s_1}\rangle \cong \mathbf{D}_2$ and $H_2^{s_1} = \langle{s_2, s_1s_2s_1}\rangle \cong \mathbf{D}_{2^{m - 4}}$. Then $H_0^{s_1}$ and $H_2^{s_1}$ are proper subgroups of $H_{0, 1} = \langle{s_0, s_1}\rangle$ and $H_{1, 2} = \langle{s_1, s_2}\rangle$, respectively, that do not contain $s_1$. It follows from the intersection condition that $H_0^{s_1} \cap H_2^{s_1} = \langle{e}\rangle$. In addition, $H_0^{s_1} \cdot H_2^{s_1} \cap \langle{s_1}\rangle = \langle{e}\rangle$, since $|H_0^{s_1} \cdot H_2^{s_1}| = 2^{m - 1}$.

                For $\mathcal{M}_{II-D, 3}(m)$, we claim that $\mathcal{M}_{II-D, 3}(m) \simeq  (H_2^{s_1} \rtimes H_0^{s_1}) \rtimes \langle{s_1}\rangle$. To prove this, it suffices to show that $H_2^{s_1}$ is invariant under conjugation by $s_0$. Direct computations yield $s_0s_2s_0 = s_2$ and $s_0(s_1s_2s_1)s_0 = s_1s_2s_1(s_2s_1s_2s_1)^{2^{m - 5}}$, which are clearly both in $H_2^{s_1}$.

                For $\mathcal{M}_{II-D, 19}(m)$, on the other hand, we claim that $\mathcal{M}_{II-D, 19}(m) \simeq (H_2^{s_1} \times H_0^{s_1}) \rtimes \langle{s_1}\rangle$. To prove this, it suffices to show that $s_0$ commutes with the generators of $H_2^{s_1}$. Computations yield $s_0s_2 = s_2s_0$ and $s_0(s_1s_2s_1) = (s_1s_2s_1)s_0$.

            \item The alternative presentations for $\mathcal{M}_{II-D, 3}(m)$ and $\mathcal{M}_{II-D, 19}(m)$ using the generators $s_0, s_1, s_2$ are obtained from the original presentations for these groups via the substitution $p := s_1s_2$, $q := s_0s_1$, and $r := s_0$.
        \end{enumerate}
    \end{proof}

    \subsection{Completing the proof of \textbf{Theorem \ref{thm:onlyStringCGroups}}, Part 2}
    \label{subsec:Gamma-M_II-D,3,19}

    We now show that except for $\mathcal{M}_{II-D, 3}(m)$ and $\mathcal{M}_{II-D, 19}(m)$ no other group in $\Gamma$ is a connected string C-group. To do so, we shall appeal to the following two lemmas, which give conditions under which the intersection condition is violated.
    \begin{lemma}
        Let $\mathcal{G} \in \Gamma$ and $s_0, s_1, s_2 \in \mathcal{G}$ be involutions with a connected string diagram.
        \begin{enumerate}
            \item If $(s_0s_1)^{2x_1} = (s_1s_2)^{2x_2} = \zeta$ is solvable in the variables $x_1, x_2$ for some central involution $\zeta \in \mathcal{G}$, then $\mathcal{G}$ is not a connected string C-group with respect to any set of distinguished generators containing $s_0, s_1, s_2$.
            \item If $s_1^{s_0} = s_1^{s_2}$, then $\mathcal{G}$ is not a connected string C-group with respect to any set of distinguished generators containing $s_0, s_1, s_2$.
        \end{enumerate}
        \label{lem:provingNotStringC1}
    \end{lemma}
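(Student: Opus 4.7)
The plan is to argue by contradiction in both parts, and in each case reduce the statement to a direct application of the intersection condition applied to the two subsets $\{s_0,s_1\}$ and $\{s_1,s_2\}$ of the putative distinguished generating set. First I would assume that $(\mathcal{G},T)$ is a connected string C-group for some set $T$ of distinguished generators containing $s_0,s_1,s_2$. Because the induced subdiagram on $\{s_0,s_1,s_2\}$ is connected (meaning $\operatorname{ord}(s_0s_1)>2$, $\operatorname{ord}(s_1s_2)>2$, and $s_0s_2=s_2s_0$), the string condition on $T$ forces $s_0,s_1,s_2$ to occupy three consecutive positions of $T$. Specializing the intersection condition to the index sets corresponding to $\{s_0,s_1\}$ and $\{s_1,s_2\}$ then yields
\[
\langle s_0,s_1\rangle\cap\langle s_1,s_2\rangle=\langle s_1\rangle=\{e,s_1\}.
\]

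For part (1), the element $\zeta=(s_0s_1)^{2x_1}$ lies in $\langle s_0,s_1\rangle$, while the equality $\zeta=(s_1s_2)^{2x_2}$ places it in $\langle s_1,s_2\rangle$; hence $\zeta\in\{e,s_1\}$. Since $\zeta$ is assumed to be an involution, $\zeta\neq e$, which forces $\zeta=s_1$. But $\zeta$ is central in $\mathcal{G}$, whereas $s_1$ fails to commute with $s_0$ by connectedness of the diagram (as $\operatorname{ord}(s_0s_1)\geq 3$). This contradiction rules out the existence of such a distinguished generating set.

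For part (2), the hypothesis $s_1^{s_0}=s_1^{s_2}$ rewrites as $s_0s_1s_0=s_2s_1s_2$. The left-hand expression belongs to $\langle s_0,s_1\rangle$ and the right-hand one to $\langle s_1,s_2\rangle$, so their common value lies in $\langle s_1\rangle=\{e,s_1\}$. The option $s_0s_1s_0=e$ is absurd since $s_1$ is a non-trivial involution, and the option $s_0s_1s_0=s_1$ would make $s_0$ and $s_1$ commute, contradicting $\operatorname{ord}(s_0s_1)\geq 3$. Again a contradiction.

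Neither part involves a deep obstacle; both reduce almost immediately to the intersection condition once one pins down the correct membership of $\zeta$ (respectively $s_0s_1s_0$) in the two rank-$2$ subgroups. The only point requiring a little care is the preliminary remark that $s_0,s_1,s_2$ must sit consecutively inside any distinguished generating set $T$ containing them, so that the intersection condition for $T$ truly specializes to $\langle s_0,s_1\rangle\cap\langle s_1,s_2\rangle=\langle s_1\rangle$; once this is in place, the contradictions in (1) and (2) are essentially one-line computations leveraging, respectively, the centrality of $\zeta$ versus the non-centrality of $s_1$, and the involutive/non-commuting behavior of $s_0s_1s_0$.
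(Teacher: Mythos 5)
Your proposal is correct and is essentially the paper's own argument: both proofs place the common element ($\zeta$, respectively $s_0s_1s_0=s_2s_1s_2$) in $\langle s_0,s_1\rangle\cap\langle s_1,s_2\rangle$ and show it cannot lie in $\langle s_1\rangle$ — the paper phrases this as $\langle s_1\rangle\lneqq\langle s_1,\zeta\rangle\leq\langle s_0,s_1\rangle\cap\langle s_1,s_2\rangle$, while you phrase it as a contradiction, using the same observations (centrality of $\zeta$ versus non-centrality of $s_1$, and non-commutation of $s_0$ with $s_1$). The only cosmetic difference is your preliminary remark that $s_0,s_1,s_2$ must sit consecutively in $T$, which is true but not actually needed, since the intersection condition is quantified over arbitrary index subsets.
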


    \begin{lemma}
        Let $\mathcal{G} \in \Gamma$. If for a fixed central involution $\zeta \in \mathcal{G}$, $(t_0t_1)^{2x} = \zeta$ is solvable in the variable $x$ for any pair of non-commuting involutions $t_0, t_1$ , then $\mathcal{G}$ is not a connected string C-group (with respect to any set of distinguished generators).
        \label{lem:provingNotStringC2}
    \end{lemma}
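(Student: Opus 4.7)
The plan is to derive Lemma~\ref{lem:provingNotStringC2} directly from Lemma~\ref{lem:provingNotStringC1}(1) by a short contradiction argument. I would suppose that $\mathcal{G}$ is a connected string C-group with respect to some distinguished generating set $S = \{s_0, s_1, \ldots, s_{n-1}\}$. Proposition~\ref{prop:groupsGeneratedByInvs} tells us that every element of $\Gamma$ has rank $3$ or $4$, so $n \geq 3$ and the triple $s_0, s_1, s_2$ is genuinely available inside $S$.

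Next I would exploit connectedness of $\mathcal{D}(S)$: consecutive distinguished generators satisfy $p_{i,i+1} \geq 3$, which forces the pairs $(s_0, s_1)$ and $(s_1, s_2)$ to be non-commuting involution pairs in $\mathcal{G}$. The universal hypothesis of the lemma then applies to each of these two pairs, producing integers $x_1, x_2$ with
\[
(s_0 s_1)^{2 x_1} \;=\; (s_1 s_2)^{2 x_2} \;=\; \zeta.
\]
This matches exactly the input to Lemma~\ref{lem:provingNotStringC1}(1), which in turn prevents $\mathcal{G}$ from being a connected string C-group with respect to any distinguished set containing $s_0, s_1, s_2$, and in particular with respect to $S$ itself. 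Since $S$ was arbitrary, $\mathcal{G}$ is not a connected string C-group with respect to any set of distinguished generators.

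The main content of the statement, and essentially the only thing to be careful about, is the packaging: Lemma~\ref{lem:provingNotStringC2} converts a universal property of \emph{all} non-commuting involution pairs in $\mathcal{G}$ into a single blanket exclusion, so that when applying it to the concrete groups in $\Gamma$ one does not have to re-examine every candidate distinguished generating set separately. I do not anticipate any serious obstacle in this plan once Lemma~\ref{lem:provingNotStringC1} is in hand; the only small point to verify while writing this out is that the rank lower bound coming from Proposition~\ref{prop:groupsGeneratedByInvs} is always at least $3$, so that the triple $s_0, s_1, s_2$ inside $S$ is legitimate and Lemma~\ref{lem:provingNotStringC1}(1) can indeed be invoked.
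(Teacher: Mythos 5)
Your proposal is correct and follows the paper's own route: the paper likewise observes that the hypothesis of Lemma~\ref{lem:provingNotStringC2} is a more restrictive version of that of Lemma~\ref{lem:provingNotStringC1}, so the conclusion follows by applying the latter to the pairs $(s_0,s_1)$ and $(s_1,s_2)$ of any putative distinguished generating set. Your write-up merely makes explicit the two small points the paper leaves implicit (connectedness forces these pairs to be non-commuting, and the groups in $\Gamma$ have rank at least $3$), both of which are verified correctly.
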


    The proof of \textbf{Lemma \ref{lem:provingNotStringC1}} hinges on the following implication: if $(s_0s_1)^{2x_1} = (s_1s_2)^{2x_2} = \zeta$ is solvable or if $s_1^{s_0} = s_1^{s_2}$, then $\langle{s_1}\rangle \lneqq \langle{s_1, \zeta}\rangle \leq \langle{s_0, s_1}\rangle \cap \langle{s_1, s_2}\rangle$. This follows from connectedness of the string diagram (that is, $s_1$ is not a central involution), which further implies that $s_1 \neq \zeta$. The hypothesis of \textbf{Lemma \ref{lem:provingNotStringC2}} is a more restrictive version of that of \textbf{Lemma \ref{lem:provingNotStringC1}}. Consequently, the proof of the former follows from that of the latter.

    It is important to mention that if the hypothesis of \textbf{Lemma \ref{lem:provingNotStringC1}} (resp. \textbf{Lemma \ref{lem:provingNotStringC2}}) is satisfied by the involutions $s_0, s_1, s_2$ (resp. $t_0, t_1$), then it is also satisfied by the involutions  $s_0' \in s_0Z(\mathcal{G})$, $s_1' \in s_1Z(\mathcal{G})$, $s_2' \in s_2Z(\mathcal{G})$ (resp. $t_0' \in t_0Z(\mathcal{G})$, $t_1' \in t_1Z(\mathcal{G})$). Thus, when using either lemma, it is enough to consider involutory coset representatives of $Z(\mathcal{G})$.

    The next lemma whose proof follows from the basic properties of cyclic groups, will be used as a companion result to \textbf{Lemma \ref{lem:provingNotStringC2}}.

    \begin{lemma}
        Let $\mathcal{G}$ be a 2-group. For any non-identity element $g \in \mathcal{G}$ and integer $c \nequiv 0 \pmod{\text{ord}(g)}$, the equation $g^{cx} = g^{\frac{\text{ord}(g)}{2}}$ has a solution in the variable $x$.
        \label{lem:gexsoln}
    \end{lemma}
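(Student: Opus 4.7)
The plan is to reduce the statement to an elementary congruence problem in $\mathbf{Z}/2^k\mathbf{Z}$, exploiting that every element of a 2-group has order a power of 2. First I would set $\text{ord}(g) = 2^k$; since $g \neq e$, we have $k \geq 1$. The target equation $g^{cx} = g^{2^{k-1}}$ is then equivalent to the congruence
\[ cx \equiv 2^{k-1} \pmod{2^k}, \]
so solvability of the group equation is the same as solvability of this congruence in the integer variable $x$.

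Next I would use the hypothesis $c \not\equiv 0 \pmod{2^k}$ to perform a $2$-adic factorization: write $c = 2^a c'$ with $c'$ odd and $0 \leq a \leq k-1$. The upper bound on $a$ is guaranteed precisely by the hypothesis, since $a \geq k$ would force $2^k \mid c$. Dividing through by $2^a$, the congruence transforms into
\[ c' x \equiv 2^{k-1-a} \pmod{2^{k-a}}, \]
whose right-hand exponent $k-1-a$ is non-negative.

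Finally, since $c'$ is odd it is a unit modulo $2^{k-a}$, so the congruence admits the explicit solution $x \equiv (c')^{-1} \cdot 2^{k-1-a} \pmod{2^{k-a}}$, and any integer $x$ in this residue class satisfies $g^{cx} = g^{2^{k-1}}$ inside $\mathcal{G}$.

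I do not anticipate any genuine obstacle; the proof is a short exercise in the $2$-adic valuation. The only point deserving care is verifying that $k-1-a \geq 0$, which is exactly the content of the hypothesis $c \not\equiv 0 \pmod{\text{ord}(g)}$ (equivalently $a \leq k-1$). Without that hypothesis, one would be trying to invert a power of $2$ in $\mathbf{Z}/2^{k-a}\mathbf{Z}$, which is impossible; with it, the odd-cofactor $c'$ handles the inversion and delivers the desired $x$.
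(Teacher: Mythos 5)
Your proof is correct: reducing to the congruence $cx \equiv 2^{k-1} \pmod{2^k}$, extracting the 2-adic valuation $c = 2^a c'$ with $a \leq k-1$, and inverting the odd cofactor is exactly the ``basic properties of cyclic groups'' argument the paper invokes without writing out (it states the lemma's proof follows from such properties and gives no further detail). Your write-up simply makes that omitted elementary argument explicit, including the one point of care --- that the hypothesis $c \nequiv 0 \pmod{\mathrm{ord}(g)}$ guarantees $2^a \mid 2^{k-1}$.
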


    With this series of lemmas, we are now ready to prove the following proposition.

    \begin{proposition}
        Except for $\mathcal{M}_{II-D, 3}(m)$ and $\mathcal{M}_{II-D, 19}(m)$, none of the groups in $\Gamma$ is a connected string C-group. In particular,
        \begin{enumerate}
            \item each of the groups $\mathcal{N}_{II, 3}(m)$, $\mathcal{N}_{II, 4}(m)$, and $\mathcal{N}_{II, 5}(m)$ has the property that if $t_0$ and $t_1$ are non-commuting involutions, then the equation $(t_0t_1)^{2x} = p^{2^{m - 3}}$ is solvable in $x$;
            \item each of the groups $\mathcal{M}_{II-A, 39}(m)$, $\mathcal{M}_{II-A, 40}(m)$, $\mathcal{M}_{II-B, 19}(m)$, $\mathcal{M}_{II-B, 20}(m)$, $\mathcal{M}_{II-D, 17}(m)$, and $\mathcal{M}_{II-D, 18}(m)$ has the property that if $s_0, s_1, s_2$ are involutions with a connected string diagram, then $(s_0s_1)^{2x_1} = (s_1s_2)^{2x_2} = \zeta$ is solvable in $x_1, x_2$ for some central involution $\zeta$;
            \item each of the groups $\mathcal{M}_{II-C, 5}(m)$, $\mathcal{M}_{II-F, 8}(m)$, $\mathcal{M}_{II-F, 9}(m)$, $\mathcal{M}_{III, 3}(m)$, $\mathcal{M}_{III, 4}(m)$, $\mathcal{M}_{III, 5}(m)$, $\mathcal{M}_{III, 10}(m)$, $\mathcal{M}_{III, 11}(m)$, $\mathcal{M}_{III, 13}(m)$, and $\mathcal{M}_{III, 14}(m)$ has the property that if $t_0$ and $t_1$ are non-commuting involutions, then the equation $(t_0t_1)^{2x} = p^{2^{m - 4}}$ is solvable in $x$; and
            \item the group $\mathcal{M}_{II-E, 1}(m)$ has the property that if $s_0, s_1, s_2$ are involutions with a connected string diagram, then either $(s_0s_1)^{2x_1} = (s_1s_2)^{2x_2} = p^{2^{m - 4}}$ is solvable in $x_1, x_2$, or $s_1^{s_0} = s_1^{s_2}$.
        \end{enumerate}
        \label{prop:Gamma-M_II-D,3,19,notstringC}
    \end{proposition}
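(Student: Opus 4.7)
The plan is a case-by-case verification driven by the involution tables (\textbf{Table \ref{tbl:NII,invslist}} and \textbf{Table \ref{tbl:MII,MIII,invslist}}), the explicit group presentations in \textbf{Sections \ref{subsec:ClassN_II}--\ref{subsec:ClassM_III}}, and Lemmas \ref{lem:provingNotStringC1}--\ref{lem:gexsoln}. For each of the four parts, I would first fix the candidate central involution $\zeta$ (namely $\zeta = p^{2^{m-3}}$ in part (1), $\zeta = p^{2^{m-4}}$ in parts (3) and (4), and a suitable element of $Z(\mathcal{G}) \cap \langle p^{2^{m-4}}\rangle$ in part (2)), then verify the stated condition using the remark that every involution may be replaced by an involutory coset representative modulo $Z(\mathcal{G})$, cutting the list down to a manageable finite set.

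For parts (1) and (3), the argument is uniform. Since every element of $\mathcal{G}$ has the normal form $p^{\bar i}q^{\bar j}r^{\bar k}s^{\bar l}$ and the non-central involutions have $\bar j, \bar k, \bar l$ restricted to finitely many values, I would loop over all ordered pairs $(t_0,t_1)$ of involutory coset representatives modulo $Z(\mathcal{G})$, discard the commuting ones, and compute $t_0 t_1$ directly from the defining relations. The conjugation relations ensure $(t_0t_1)^2 \in \langle p\rangle$; writing $(t_0t_1)^2 = p^c$, Lemma \ref{lem:gexsoln} then guarantees a solution of $(t_0t_1)^{2x} = \zeta$ exactly when $c \nequiv 0 \pmod{\mathrm{ord}(p)/\mathrm{ord}(\zeta)}$, which is the condition to be checked. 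In each group on the list, the relations force $c$ to be a unit times the appropriate power of $2$, so Lemma \ref{lem:gexsoln} closes the case and Lemma \ref{lem:provingNotStringC2} yields the conclusion.

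For part (2), the condition to verify is strictly weaker (only triples $(s_0,s_1,s_2)$ forming a connected string diagram matter). I would enumerate all involutory triples of coset representatives, throw out those with $s_0s_2 \ne s_2s_0$ or with a commuting adjacent pair, and for the survivors compute $(s_0s_1)^2$ and $(s_1s_2)^2$ in $\langle p,q\rangle$. In each of the six groups in part (2), both squares will lie in the cyclic subgroup $\langle p\rangle$ with compatible $2$-adic valuations, so a single central element $\zeta$ serves as a common even power and Lemma \ref{lem:provingNotStringC1}(1) applies. The bookkeeping here is the bulkiest part of the proof, but it is entirely mechanical once the involution tables are in hand.

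Part (4), the group $\mathcal{M}_{II-E,1}(m)$, is the one I expect to be the main obstacle, because it requires a dichotomy instead of a single uniform estimate: for every connected-string triple one must exhibit \emph{either} a common central square \emph{or} the conjugacy coincidence $s_1^{s_0} = s_1^{s_2}$. My plan is to split the enumeration according to whether $s_0$ and $s_2$ act on $s_1$ by the same automorphism. When they do, Lemma \ref{lem:provingNotStringC1}(2) applies directly; when they do not, the discrepancy between $s_0s_1s_0$ and $s_2s_1s_2$ is a controlled power of $p$, and this extra rigidity, combined with Lemma \ref{lem:gexsoln}, forces $(s_0s_1)^2$ and $(s_1s_2)^2$ to admit $p^{2^{m-4}}$ as a common power, bringing us back to Lemma \ref{lem:provingNotStringC1}(1). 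Assembling parts (1)--(4) and invoking Lemmas \ref{lem:provingNotStringC1} and \ref{lem:provingNotStringC2} then shows that no group in $\Gamma \setminus \{\mathcal{M}_{II-D,3}(m), \mathcal{M}_{II-D,19}(m)\}$ is a connected string C-group, completing the proof.
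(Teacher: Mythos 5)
Your overall strategy --- reduce to involutory coset representatives of $Z(\mathcal{G})$, compute the relevant products from the defining relations, and close each case with Lemmas \ref{lem:provingNotStringC1}, \ref{lem:provingNotStringC2} and \ref{lem:gexsoln} --- is exactly the paper's, including the treatment of part (4), where your dichotomy on whether $s_0$ and $s_2$ induce the same conjugation on $s_1$ is just the contrapositive of the paper's tabulation of the ``bad'' triples in \textbf{Table \ref{tbl:M_II-E,1,shortlist}}. One cosmetic difference: for $\mathcal{M}_{III,3}(m)$, $\mathcal{M}_{III,4}(m)$, $\mathcal{M}_{III,5}(m)$ the paper sidesteps a fresh enumeration by using the decomposition $\mathcal{M}_{III,n}(m)\simeq\langle p,r,s\rangle\times\langle q\rangle$ of \textbf{Proposition \ref{prop:M_III,productdecomposition}} to reduce to the already-settled groups $\mathcal{N}_{II,n}(m-1)$; your direct computation would also work.

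There is, however, one concrete inaccuracy in part (2). You assert that for all six groups both $(s_0s_1)^2$ and $(s_1s_2)^2$ lie in $\langle p\rangle$, and at the outset that $\zeta$ may be taken in $Z(\mathcal{G})\cap\langle p^{2^{m-4}}\rangle$, i.e.\ $\zeta=p^{2^{m-4}}$. This fails for $\mathcal{M}_{II-D,17}(m)$: for the pair $t_0=r$, $t_1=p^kqr$ one gets $(t_0t_1)^{2x}=q^{2x}$, so the only central involution reachable from this pair is $q^2$, which is \emph{not} in $\langle p\rangle$ (recall $q^2\notin\langle p\rangle$ in class $\mathcal{M}_{II-D}$). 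The statement of part (2) deliberately allows an arbitrary central involution $\zeta$, but one must then argue that a \emph{single} $\zeta$ works for both adjacent pairs of a given connected string triple. The paper supplies the missing step: if $(s_0s_1)^{2x_1}=p^{2^{m-4}}$ is not solvable, then up to the center $\{s_0,s_1\}$ must be $\{r,p^kqr\}$, and the requirement that $s_0$ and $s_2$ commute while $s_1$ and $s_2$ do not forces $s_2$ to be of the same form as $s_0$, whence $(s_1s_2)^{2x_2}=q^2$ is solvable as well. As written, your plan would stall at exactly this point; the repair is short but it is not the ``entirely mechanical'' valuation comparison you describe.
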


    \begin{proof}
        The proof is computational. For brevity, we show only the computations for some groups. The rest can be done in a similar fashion. In \textbf{Tables \ref{tbl:N_3,4,5,noncomminvs},  \ref{tbl:M_II-A,39,40,II-B,19,20,II-C,5,II-D,17,18,II-E,1,II-F,8,II-F,9,M_III,10,11,13,14,noncomminvs}}, we give the pairs of non-commuting involutory coset representatives $t_0$, $t_1$ of each of the centers of the groups in $\Gamma - \{\mathcal{M}_{II-D, 3}(m), \mathcal{M}_{II-D, 19}(m)\}$.

        \begin{enumerate}
            \item For $\mathcal{N}_{II, 3}(m)$, we have the pairs $(t_0, t_1) = (q, p^ir)$ and $(p^ir, p^{i'}r)$ , where $0 \leq i, i' \leq 2^{m - 2} - 1$ and $i' \neq i \mod{2^{m - 3}}$. We obtain the equations $(q \cdot p^ir)^{2x_1} = p^{2^{m - 3}x_1} = p^{2^{m - 3}}$ and $(p^ir \cdot p^{i'}r)^{2x_2} = p^{2(i - i')x_2} = p^{2^{m - 3}}$. By \textbf{Lemma \ref{lem:gexsoln}}, these equations are solvable.
            \item For $\mathcal{M}_{II-D, 17}(m)$, we have the pairs $(t_0, t_1) = (r, p^jq)$, $(r, p^kqr)$, $(p^jq, p^{j'}q)$, $(p^jq, p^kqr)$, and $(p^kqr, p^{k'}qr)$, where $j, j' = 1, 3, \ldots, 2^{m - 3} - 1$, $j' \neq j \mod{2^{m - 4}}$, $k, k' = 0, 2, \ldots, 2^{m - 3} - 2$, and $k' \neq k \mod{2^{m - 4}}$. We obtain the equations $(r \cdot p^jq )^{2x_1} = p^{2^{m - 4}x_1} = p^{2^{m - 4}}$, $(r \cdot p^kqr)^{2x_2} = q^{2x_2} = q^2$, $(p^jq \cdot p^{j'}q)^{2x_3} = p^{2(j - j')x_3} = p^{2^{m - 4}}$, $(p^jq \cdot p^kqr)^{2x_4} = p^{2(j - k)x_4} = p^{2^{m - 4}}$, and $(p^kqr \cdot p^{k'}qr)^{2x_5} = p^{2(k - k')x_5} = p^{2^{m - 4}}$. By \textbf{Lemma \ref{lem:gexsoln}}, these equations are solvable. Now let $s_0, s_1, s_2$ be involutory coset representatives of $Z(\mathcal{M}_{II-D, 17}(m))$ with a connected string diagram. Suppose that $(s_0s_1)^{2x_1} = (s_1s_2)^{2x_2} = p^{2^{m - 4}}$ has no solution in $x_1, x_2$. After switching $s_0$ and $s_2$, if needed, we may assume that $(s_0s_1)^{2x_1} = q^2$ for some $x_1$. From the above computations and the commutativity restriction on $s_0, s_2$, we have either $s_0, s_2 = r$ and $s_1 = p^kqr$, or $s_0, s_2 = p^kqr$ and $s_1 = r$. In any case, $(s_1s_2)^{2x_2} = q^2$ for some $x_2$. Thus, either $(s_0s_1)^{2x_1} = (s_1s_2)^{2x_2} = p^{2^{m - 4}}$ or $(s_0s_1)^{2x_1} = (s_1s_2)^{2x_2} = q^2$ is solvable.
            \item Let $n = 3, 4, 5$. Then, by \textbf{Proposition \ref{prop:M_III,productdecomposition}}, $\mathcal{M}_{III, n}(m)$ can be decomposed as $\langle{p, q, r, s}\rangle \simeq \langle{p, r, s}\rangle \times \langle{q}\rangle$, where $\langle{p, r, s}\rangle$ is isomorphic to $\mathcal{N}_{II, n}(m - 1)$. Thus, an involution $w \in \mathcal{M}_{III, n}(m)$ can be written in the form $w = uv$, where $u = 1$ or is an involution in $\langle{p, r, s}\rangle$ and $v = 1 \text{ or } q$. It follows from (1) after setting $m := m - 1$ that if $t_0$ and $t_1$ are non-commuting involutions in $\mathcal{M}_{III, n}(m)$, then $(t_0t_1)^{2x} = p^{2^{m - 4}}$ is solvable.
           \item Consider now the group $\mathcal{M}_{II-E, 1}(m)$. We look at those involutory coset representatives $s_0, s_1, s_2$ of $Z(\mathcal{M}_{II-E, 1}(m))$ with a connected string diagram for which $(s_0s_1)^{2x_1} = (s_1s_2)^{2x_2} = p^{2^{m - 4}}$ is not solvable. These triples (up to duality or re-indexing) are listed in \textbf{Table \ref{tbl:M_II-E,1,shortlist}}. A straightforward computation for each of these triples show that $s_1^{s_0} = s_1^{s_2}$.
               \begin{table}[H]
                   \footnotesize
                   \centering
                   \begin{tabu}{|l|l|l|}
                       \hline
                       \rowfont[r]{} $s_0$ & $s_1$ & $s_2$\\
                       \hline
                       \hline
                       $p^jr$ & $p^jqr$, $p^jq^3r$ & $p^jr$, $p^jq^2r$ \\
                       \hline
                       $p^jqr$ & $p^jr$, $p^jq^2r$ & $p^jqr$, $p^jq^3r$ \\
                       \hline
                       $p^jq^2r$ & $p^jqr$, $p^jq^3r$ & $p^jq^2r$ \\
                       \hline
                       $p^jq^3r$ & $p^jr$, $p^jq^2r$ & $p^jq^3r$ \\
                       \hline
                    \end{tabu}
                    \caption{Triples of involutory coset representatives $s_0$, $s_1$, $s_2$ of $Z(\mathcal{M}_{II-E, 1}(m))$ (up to duality or re-indexing) with a connected string diagram for which $(s_0s_1)^{2x_1} = (s_1s_2)^{2x_2} = p^{2^{m - 4}}$ is not solvable in $x_1, x_2$, where $j = 1, 3, \ldots, 2^{m - 3} - 1$.}
                    \label{tbl:M_II-E,1,shortlist}
                \end{table}
        \end{enumerate}
    \end{proof} 
    
    \section{Conclusion}
\label{sec:conc}

    This work provided a classification theorem for string C-groups of order $2^m$ and exponent at least $2^{m - 3}$. We have shown that except for the trivial cases (the cyclic group $\mathbf{Z}_2$ and the dihedral group $\mathbf{D}_{2^{m - 1}}$) only two of these groups are connected string C-groups. These are the groups $\mathcal{M}_{II-D,3}(m)$ and $\mathcal{M}_{II-D,19}(m)$. Both groups belong to class $\mathcal{M}_{II}(m)$, which means that they have rank 3 and exponent $2^{m - 3}$. In addition, both groups are smooth quotients of the infinite string Coxeter group $W = [4, 2^{m - 3}]$ and may be defined by the group presentations
         \begin{align*}
            \mathcal{M}_{II-D, 3}(m) &=
            \Bigg\langle
            s_0, s_1, s_2 \; \Bigg\mid
            \begin{array}{cc}
                s_0^2 = 1, \;
                s_1^2 = 1, \;
                s_2^2 = 1, \;
                (s_0s_1)^4 = 1, \;
                (s_0s_2)^2 = 1, \\
                (s_1s_2)^{2^{m - 3}} = 1, \;
                (s_0s_1s_2s_1)^2 \cdot (s_2s_1)^{2^{m - 4}} = 1 \\
            \end{array}
            \Bigg\rangle \\
            & \simeq (\mathbf{D}_{2^{m - 4}} \rtimes \mathbf{D}_2) \rtimes \mathbf{Z}_2, \\
            \mathcal{M}_{II-D, 19}(m) &=
            \Bigg\langle
            s_0, s_1, s_2 \; \Bigg\mid
            \begin{array}{cc}
                s_0^2 = 1, \;
                s_1^2 = 1, \;
                s_2^2 = 1, \;
                (s_0s_1)^4 = 1, \;
                (s_0s_2)^2 = 1, \\
                (s_1s_2)^{2^{m - 3}} = 1, \;
                (s_0s_1s_2s_1)^2 = 1 \\
            \end{array}
            \Bigg\rangle \\
            &\simeq (\mathbf{D}_{2^{m - 4}} \times \mathbf{D}_2) \rtimes \mathbf{Z}_2.
         \end{align*}
         Two groups in the classification are disconnected. One is isomorphic to $\mathbf{D}_{2^{m - 2}} \times \mathbf{Z}_2$  and the other is isomorphic to $\mathbf{D}_{2^{m - 3}} \times \mathbf{Z}_2 \times \mathbf{Z}_2$. The former belongs to class $\mathcal{N}_{II}(m)$ with rank 3 and exponent $2^{m - 2}$ while the latter belongs to class $\mathcal{M}_{III}(m)$ with rank 4 and exponent $2^{m - 3}$.

         The method of proof of the said classification theorem relied on the completeness of the list of the groups of order $2^m$, exponent at least $2^{m - 3}$, and rank at least 3. Previous literature provided the complete list of these groups except for those whose exponent is exactly $2^{m - 3}$. Thus, the task of completing the missing groups was accomplished prior to the classification. Using the presentations of the groups provided in McKelden's paper~\cite{McKelden1906}, we performed via GAP a parameter search that gave rise to groups that are not isomorphic to any of those already in the list. The search gave us a total of ten additional groups of rank 3 and four additional groups of rank 4 that were previously missing from McKelden's list.

         From the original 146 groups originally considered, we have narrowed down the list of candidates for string C-groups to 22 groups generated by involutions. This significant reduction was due to the use of a result, which we established, that gave a necessary condition for a 2-group to be generated by involutions. All the groups in the shortlist, except for $\mathcal{M}_{II-D,3}(m)$ and $\mathcal{M}_{II-D,19}(m)$, were shown not to be connected string C-groups by applying a series of lemmas whose respective hypotheses violate the intersection condition.

         The restriction on the exponent of a group was an important consideration in our classification. The need for this restriction is a testament to the difficulty of the general problem of characterizing string C-groups of order $2^m$ whose solution we primarily intended to contribute. In theory, the method presented in this work may be applied to classify string C-groups of order $2^m$ and exponent $2^{m - 4}$. As an initial step, one might consult~\cite{Babb1910, Finkel1906} for a list of known groups of this given exponent. It would certainly be interesting to investigate the deeper role of exponent in the characterization of string C-groups.
         
    \pagebreak
    \appendix
    \section{Set of parameters defining $\mathcal{G}$}
\label{app:params}

    \begin{table}[H]
        \footnotesize
        \centering
        \begin{tabular}{|c|ccccc|}
            \hline
            $n$ & $e_1$ & $e_2$ & $e_3$ & $e_4$ & $e_5$ \\
            \hline
            \hline
            $1$ & 1 & 0 & 0 & 1 & 0 \\
            \hline
            $2$ & $-1$ & 1 & 1 & 1 & 0 \\
            \hline
            $3$ & $-1$ & 0 & 0 & 1 & 0 \\
            \hline
            $4$ & $-1$ & 1 & 1 & 0 & 0 \\
            \hline
            $5$ & $-1$ & 0 & 0 & 0 & 0 \\
            \hline
            $6$ & $-1$ & 0 & 1 & 0 & 0 \\
            \hline
            $7$ & $-1$ & 0 & 0 & 0 & 1 \\
            \hline
            $8$ & 1 & 0 & 1 & 0 & 0 \\
            \hline
            $9$ & 1 & 0 & 0 & 0 & 0 \\
            \hline
        \end{tabular}
        \caption{Parameters defining $\mathcal{N}_{II, n}(m)$.}
        \label{tbl:NII,params}
    \end{table}

    \begin{table}[H]
        \footnotesize
        \centering
        \begin{tabular}{|c|cccccccc|c|cccccccc|}
            \hline
            $n$ & $e_1$ & $e_2$ & $e_3$ & $e_4$ & $e_5$ & $e_6$ & $e_7$ & $e_8$ &
            $n$ & $e_1$ & $e_2$ & $e_3$ & $e_4$ & $e_5$ & $e_6$ & $e_7$ & $e_8$ \\
            \hline
            \hline
            $1$ & 0 & 0 & 0 & 0 & 0 & 0 & 0 & 1 &
            $24^{**}$ & 1 & 1 & 1 & 0 & 1 & 2 & 2 & $-1$ \\
            \hline
            $2$ & 0 & 0 & 0 & 0 & 0 & 0 & 2 & 1 &
            $25$ & 0 & 1 & 0 & 1 & 0 & 0 & 0 & $-1$ \\
            \hline
            $3$ & 0 & 0 & 0 & 0 & 0 & 0 & 1 & 1 &
            $26$ & 0 & 1 & 1 & 0 & 0 & 0 & 0 & $-1$ \\
            \hline
            $4$ & 1 & 0 & 0 & 0 & 0 & 0 & 1 & 1 &
            $27$ & 0 & 1 & 1 & 0 & 0 & 0 & 2 & $-1$ \\
            \hline
            $5$ & 0 & 0 & 0 & 0 & 0 & 2 & 1 & 1 &
            $28$ & 0 & 1 & 1 & 0 & 1 & 0 & 0 & $-1$ \\
            \hline
            $6$ & 1 & 0 & 0 & 0 & 0 & 0 & 0 & 1 &
            $29$ & 0 & 1 & 1 & 0 & 0 & 2 & 0 & $-1$ \\
            \hline
            $7$ & 1 & 0 & 0 & 0 & 0 & 0 & 2 & 1 &
            $30$ & 0 & 0 & 0 & 0 & 0 & 0 & 0 & $-1$ \\
            \hline
            $8$ & 0 & 0 & 0 & 0 & 1 & 0 & 0 & 1 &
            $31$ & 0 & 0 & 0 & 0 & 0 & 0 & 2 & $-1$ \\
            \hline
            $9$ & 0 & 0 & 0 & 0 & 0 & 2 & 0 & 1 &
            $32$ & 0 & 1 & 0 & 0 & 0 & 2 & 0 & $-1$ \\
            \hline
            $10$ & 1 & 0 & 0 & 0 & 0 & 2 & 0 & 1 &
            $33$ & 1 & 1 & 0 & 0 & 0 & 0 & 0 & $-1$ \\
            \hline
            $11$ & 1 & 0 & 0 & 0 & 0 & 2 & 1 & 1 &
            $34$ & 1 & 1 & 0 & 0 & 0 & 0 & 2 & $-1$ \\
            \hline
            $12$ & 0 & 0 & 0 & 0 & 1 & 0 & 0 & $-1$ &
            $35$ & 0 & 1 & 0 & 0 & 1 & 0 & 0 & $-1$ \\
            \hline
            $13$ & 0 & 0 & 0 & 0 & 1 & 0 & 2 & $-1$ &
            $36$ & 0 & 1 & 0 & 0 & 1 & 0 & 0 & 1 \\
            \hline
            $14$ & 0 & 1 & 0 & 0 & 1 & 2 & 2 & $-1$ &
            $37$ & 0 & 1 & 0 & 0 & 0 & 0 & 0 & 1 \\
            \hline
            $15$  & 1 & 1 & 0 & 0 & 0 & 2 & 0 & $-1$ &
            $38$ & 0 & 1 & 0 & 0 & 0 & 0 & 2 & 1 \\
            \hline
            $16$ & 1 & 1 & 0 & 0 & 0 & 2 & 2 & $-1$ &
            $39$ & 0 & 1 & 0 & 0 & 0 & 0 & 0 & $-1$ \\
            \hline
            $17$ & 0 & 1 & 1 & 0 & 0 & 0 & 0 & 1 &
            $40$ & 0 & 1 & 0 & 0 & 0 & 0 & 2 & $-1$ \\
            \hline
            $18$ & 0 & 1 & 1 & 0 & 1 & 0 & 0 & 1 &
            $41$ & 0 & 0 & 0 & 0 & 0 & 1 & 0 & $-1$ \\
            \hline
            $19$ & 0 & 0 & 0 & 1 & 0 & 0 & 0 & $-1$ &
            $42$ & 0 & 0 & 0 & 0 & 0 & 1 & 2 & $-1$ \\
            \hline
            $20$ & 0 & 0 & 0 & 1 & 0 & 0 & 2 & $-1$ &
            $43$ & 0 & 0 & 0 & 0 & 0 & 0 & 1 & $-1$ \\
            \hline
            $21^*$ & 1 & 1 & 0 & 1 & 0 & 0 & 0 & $-1$ &
            $44$ & 0 & 0 & 0 & 0 & 1 & 1 & 1 & $-1$ \\
            \hline
            $22^*$ & 1 & 1 & 1 & 0 & 1 & 0 & 0 & $-1$ &
            $45$ & 0 & 1 & 0 & 0 & 0 & 1 & 0 & 1 \\
            \hline
            $23^{**}$ & 1 & 1 & 0 & 1 & 0 & 0 & 2 & $-1$ &
            $46$ & 0 & 1 & 0 & 0 & 0 & 1 & 2 & 1 \\
            \hline
        \end{tabular}
        \caption{Parameters defining $\mathcal{M}_{II-A, n}(m)$. $\mathcal{M}_{II-A, 21}(m) \simeq \mathcal{M}_{II-A, 22}(m)$ and $\mathcal{M}_{II-A, 23}(m) \simeq \mathcal{M}_{II-A, 24}(m)$.}
        \label{tbl:MII-A,params}
    \end{table}

    \begin{table}[H]
        \footnotesize
        \centering
        \begin{tabular}{|c|ccccccc|c|ccccccc|}
            \hline
            $n$ & $e_1$ & $e_2$ & $e_3$ & $e_4$ & $e_5$ & $e_6$ & $e_7$ &
            $n$ & $e_1$ & $e_2$ & $e_3$ & $e_4$ & $e_5$ & $e_6$ & $e_7$ \\
            \hline
            \hline
            $1$ & 0 & 0 & 0 & 0 & 0 & 0 & 1 &
            $15$ & 1 & 0 & 0 & 0 & 1 & 2 & 1 \\
            \hline
            $2$ & 0 & 0 & 0 & 0 & 0 & 2 & 1 &
            $16$ & 1 & 0 & 1 & 1 & 0 & 0 & 1 \\
            \hline
            $3$ & 0 & 0 & 0 & 0 & 0 & 1 & 1 &
            $17$ & 1 & 0 & 1 & 1 & 1 & 0 & 1 \\
            \hline
            $4$ & 1 & 1 & 0 & 1 & 0 & 1 & 1 &
            $18$ & 1 & 0 & 1 & 1 & 1 & 2 & 1 \\
            \hline
            $5$ & 0 & 0 & 0 & 1 & 0 & 1 & 1 &
            $19$ & 0 & 1 & 0 & 0 & 0 & 0 & 1 \\
            \hline
            $6$ & 1 & 0 & 0 & 1 & 0 & 0 & 1 &
            $20$ & 0 & 1 & 0 & 0 & 1 & 2 & 1 \\
            \hline
            $7$ & 0 & 0 & 0 & 0 & 1 & 0 & 1 &
            $21$ & 1 & 0 & 1 & 0 & 0 & 0 & 1 \\
            \hline
            $8$ & 0 & 1 & 0 & 1 & 0 & 0 & 1 &
            $22$ & 1 & 0 & 1 & 0 & 0 & 2 & 1 \\
            \hline
            $9$ & 0 & 0 & 0 & 1 & 0 & 0 & 1 &
            $23$ & 1 & 0 & 1 & 0 & 1 & 2 & 1 \\
            \hline
            $10$ & 1 & 0 & 0 & 0 & 0 & 2 & 1 &
            $24$ & 0 & 1 & 0 & 0 & 0 & 2 & 1 \\
            \hline
            $11$ & 1 & 0 & 0 & 0 & 1 & 0 & 1 &
            $25$ & 0 & 1 & 0 & 0 & 1 & 0 & 1 \\
            \hline
            $12$ & 0 & 1 & 0 & 1 & 1 & 2 & 1 &
            $26$ & 1 & 1 & 0 & 0 & 0 & 1 & 1 \\
            \hline
            $13$ & 1 & 0 & 0 & 1 & 1 & 0 & 1 &
            $27$ & 1 & 1 & 0 & 1 & 0 & 1 & $-1$\\
            \hline
            $14$ & 1 & 0 & 0 & 0 & 0 & 0 & 1 &
            $28$ & 1 & 1 & 0 & 1 & 1 & 1 & $-1$ \\
            \hline
        \end{tabular}
        \caption{Parameters defining $\mathcal{M}_{II-B, n}(m)$.}
        \label{tbl:MII-B,params}
    \end{table}

    \begin{table}[H]
        \footnotesize
        \centering
        \begin{tabular}{|c|cc|}
            \hline
            $n$ & $e_1$ & $e_2$ \\
            \hline
            \hline
            1 & 0 & 0 \\
            \hline
            2 & 0 & 1 \\
            \hline
            3 & 1 & 0 \\
            \hline
            4 & 1 & 1 \\
            \hline
            5 & $-1$ & 0 \\
            \hline
        \end{tabular}
        \caption{Parameters defining $\mathcal{M}_{II-C, n}(m)$.}
        \label{tbl:MII-C,params}
    \end{table}

    \begin{table}[H]
        \footnotesize
        \centering
        \begin{tabular}{|c|cccccccc|c|cccccccc|}
            \hline
            $n$ & $e_1$ & $e_2$ & $e_3$ & $e_4$ & $e_5$ & $e_6$ & $e_7$ & $e_8$ &
            $n$ & $e_1$ & $e_2$ & $e_3$ & $e_4$ & $e_5$ & $e_6$ & $e_7$ & $e_8$ \\
            \hline
            \hline
            1 & 0 & 0 & 0 & 1 & 0 & 1 & 0 & 1 &
            13 & 0 & 0 & 0 & 1 & 0 & 0 & 0 & 1 \\
            \hline
            2 & 1 & 1 & 0 & 1 & 0 & 1 & 0 & 1 &
            14 & 1 & 1 & 0 & 1 & 0 & 0 & 0 & 1 \\
            \hline
            3 & 1 & 1 & 0 & 0 & 0 & 1 & 0 & 1 &
            $15^*$ & 1 & 0 & 1 & 1 & 0 & 0 & 0 & 1 \\
            \hline
            4 & 0 & 0 & 0 & 0 & 0 & 1 & 0 & 1 &
            16 & 0 & 0 & 0 & 0 & 1 & 0 & 0 & $-1$ \\
            \hline
            5 & 0 & 0 & 0 & 0 & 0 & 0 & 0 & 1 &
            17 & 1 & 1 & 0 & 0 & 1 & 0 & 0 & 1 \\
            \hline
            6 & 0 & 0 & 0 & 1 & 1 & 0 & 0 & 1 &
            18 & 0 & 0 & 0 & 0 & 0 & 0 & 0 & $-1$ \\
            \hline
            7 & 1 & 1 & 0 & 1 & 1 & 0 & 0 & 1 &
            19 & 1 & 1 & 0 & 0 & 0 & 0 & 0 & 1 \\
            \hline
            8 & 1 & 0 & 1 & 1 & 1 & 0 & 1 & 1 &
            20 & 1 & 1 & 0 & 0 & 0 & 0 & 1 & 1 \\
            \hline
            9 & 0 & 0 & 0 & 0 & 1 & 0 & 1 & $-1$ &
            21 & 0 & 0 & 0 & 1 & 1 & 0 & 1 & 1 \\
            \hline
            $10^*$ & 0 & 1 & 0 & 1 & 1 & 0 & 1 & $-1$ &
            22 & 1 & 1 & 0 & 1 & 1 & 0 & 1 & 1 \\
            \hline
            11 & 0 & 1 & 0 & 1 & 1 & 0 & 0 & $-1$ &
            \multirow{2}{*}{23} & \multirow{2}{*}{0} & \multirow{2}{*}{0} & \multirow{2}{*}{0} & \multirow{2}{*}{0} & \multirow{2}{*}{0} & \multirow{2}{*}{0} & \multirow{2}{*}{1} & \multirow{2}{*}{1} \\
            \cline{1-9}
            12 & 0 & 0 & 0 & 0 & 1 & 0 & 0 & 1 &
            & & & & & & & & \\
            \hline
        \end{tabular}
        \caption{Parameters defining $\mathcal{M}_{II-D, n}(m)$. $\mathcal{M}_{II-D, 15}(m) \simeq \mathcal{M}_{II-D, 10}(m)$.}
        \label{tbl:MII-D,params}
    \end{table}

    \begin{table}[H]
        \footnotesize
        \centering
        \begin{tabular}{|c|cccc|}
            \hline
            $n$ & $e_1$ & $e_2$ & $e_3$ & $e_4$ \\
            \hline
            \hline
            $1$ & $-1$ & 1 & 1 & 0 \\
            \hline
            $2$ & $-1$ & 1 & 3 & 0 \\
            \hline
            $3$ & $-1$ & 1 & 1 & 1 \\
            \hline
            $4$ & 1 & 1 & 2 & 1 \\
            \hline
            $5$ & 1 & 0 & 0 & 0 \\
            \hline
            $6$ & 1 & 1 & 0 & 0 \\
            \hline
            $7$ & 1 & 0 & 2 & 0 \\
            \hline
            $8$ & 1 & 0 & 0 & 1 \\
            \hline
        \end{tabular}
        \caption{Parameters defining $\mathcal{M}_{II-E, n}(m)$.}
        \label{tbl:MII-E,params}
    \end{table}

    \begin{table}[H]
        \footnotesize
        \centering
        \begin{tabular}{|c|ccccc|}
            \hline
            $n$ & $e_1$ & $e_2$ & $e_3$ & $e_4$ & $e_5$ \\
            \hline
            \hline
            $1$ & 0 & 0 & 0 & 0 & 0\\
            \hline
            $2$ & 1 & 0 & 0 & 0 & 0 \\
            \hline
            $3$ & 1 & 0 & 0 & 0 & 1 \\
            \hline
            $4$ & 0 & 1 & 0 & 0 & 0 \\
            \hline
            $5$ & 1 & 1 & 0 & 0 & 0 \\
            \hline
            $6$ & 0 & 1 & 1 & 0 & 0 \\
            \hline
            $7$ & 0 & 0 & 1 & 0 & 0 \\
            \hline
            $8$ & 0 & 0 & 1 & 1 & 0 \\
            \hline
            $9$ & 0 & 1 & 1 & 1 & 0 \\
            \hline
        \end{tabular}
        \caption{Parameters defining $\mathcal{M}_{II-F, n}(m)$.}
        \label{tbl:MII-F,params}
    \end{table}

    \begin{table}[H]
        \footnotesize
        \centering
        \begin{tabular}{|c|cccc|}
            \hline
            $n$ & $e_1$ & $e_2$ & $e_3$ & $e_4$ \\
            \hline
            \hline
            $1$ & 0 & 0 & 0 & 0 \\
            \hline
            $2$ & 0 & 1 & 1 & 0 \\
            \hline
            $3$ & 0 & 0 & 1 & 0 \\
            \hline
            $4$ & 1 & 0 & 0 & 0 \\
            \hline
            $5$ & 1 & 0 & 0 & 1 \\
            \hline
            $6$ & 1 & 0 & 1 & 0 \\
            \hline
            $7$ & 1 & 1 & 0 & 1 \\
            \hline
        \end{tabular}
        \caption{Parameters defining $\mathcal{M}_{II-G, n}(m)$.}
        \label{tbl:MII-G,params}
    \end{table}

    \begin{table}[H]
        \footnotesize
        \centering
        \begin{tabular}{|c|cccccccc|}
            \hline
            $n$ & $e_1$ & $e_2$ & $e_3$ & $e_4$ & $e_5$ & $e_6$ & $e_7$ & $e_8$ \\
            \hline
            \hline
            $1$ & 1 & 1 & 0 & 0 & 0 & 0 & 1 & 0 \\
            \hline
            $2$ & $-1$ & 1 & 1 & 0 & 0 & 1 & 1 & 0 \\
            \hline
            $3$ & $-1$ & 1 & 0 & 0 & 0 & 0 & 1 & 0 \\
            \hline
            $4$ & $-1$ & 1 & 1 & 0 & 0 & 1 & 0 & 0 \\
            \hline
            $5$ & $-1$ & 1 & 0 & 0 & 0 & 0 & 0 & 0 \\
            \hline
            $6$ & $-1$ & 1 & 0 & 0 & 0 & 1 & 0 & 0 \\
            \hline
            $7$ & $-1$ & 1 & 0 & 0 & 1 & 0 & 0 & 0 \\
            \hline
            $8$ & 1 & 1 & 0 & 0 & 0 & 1 & 0 & 0 \\
            \hline
            $9$ & 1 & 1 & 0 & 0 & 0 & 0 & 0 & 0 \\
            \hline
            $10$ & 1 & $-1$ & 0 & 1 & 1 & 0 & 0 & 0 \\
            \hline
            $11$ & 1 & $-1$ & 1 & 1 & 1 & 0 & 0 & 0 \\
            \hline
            $12$ & 1 & 1 & 1 & 1 & 0 & 0 & 0 & 0 \\
            \hline
            $13$ & $-1$ & 1 & 1 & 1 & 0 & 0 & 0 & 0 \\
            \hline
            $14$ & $-1$ & $-1$ & 0 & 1 & 0 & 0 & 0 & 1 \\
            \hline
        \end{tabular}
        \caption{Parameters defining $\mathcal{M}_{III, n}(m)$.}
        \label{tbl:MIII,params}
    \end{table} 
    
    \pagebreak
    
    {
\section{Set of involutions in $\mathcal{G}$}
\label{app:invs}
    \begin{table}[H]
        \footnotesize
        \centering
        \begin{tabular}{|c|c|c|c|c|}
            \hline
            \multirow{2}{*}{$n$} & \multicolumn{2}{c|}{$\text{inv}(\mathcal{N}_{II, n}(m))$} & \multirow{2}{*}{$|\text{inv}(\mathcal{N}_{II, n}(m))|$} & \multirow{2}{*}{$\varphi_X$} \\
            \cline{2-3}
            & \textbf{Central} & \textbf{Non-Central} & & \\
            \hline
            \hline
            \multirow{2}{*}{$1$} & \multirow{5}{*}{$p^{2^{m - 3}}$} & $q$, $p^{2^{m - 3}}q$, $r$, $p^{2^{m - 3}}r$, & \multirow{2}{*}{$1 + 6$} & \multirow{3}{*}{$\varphi_{\{p\}}$} \\
            & & $p^{2^{m - 4}}qr$, $p^{3 \cdot 2^{m - 4}}qr$ & & \\
            \cline{1-1}
            \cline{3-4}

            $2$ & & $q$, $p^{2^{m - 3}}q$, $p^kr$ & $1 + (2 + 2^{m - 3})$ & \\
            \cline{1-1}
            \cline{3-5}

            $3$ & & $q$, $p^{2^{m - 3}}q$, $p^ir$ & $1 + (2 + 2^{m - 2})$ & \multirow{3}{*}{---} \\
            \cline{1-1}
            \cline{3-4}

            $4$ & & $q$, $p^{2^{m - 3}}q$, $p^kr$, $p^iqr$ & $1 + (2 + 2^{m - 3} + 2^{m - 2})$ & \\
            \cline{1-2}
            \cline{3-4}

            $5$ & \multirow{4}{*}{$p^{2^{m - 3}}$, $q$, $p^{2^{m - 3}}q$} & $p^ir$, $p^iqr$ & $3 + 2^{m - 1}$ & \\
            \cline{1-1}
            \cline{3-5}

            $6$ & & $p^kr$, $p^kqr$ & $3 + 2^{m - 2}$ & \multirow{5}{*}{$\varphi_{\{p\}}$} \\
            \cline{1-1}
            \cline{3-4}

            $7$ & & --- & $3 + 0$ & \\
            \cline{1-1}
            \cline{3-4}

            $8$ & & $r$, $p^{2^{m - 3}}r$, $qr$, $p^{2^{m - 3}}qr$ & $3 + 4$ & \\
            \cline{1-4}
            \multirow{2}{*}{$9$} & $p^{2^{m - 3}}$, $q$, $p^{2^{m - 3}}q$ & \multirow{2}{*}{---} & \multirow{2}{*}{$7 + 0$} & \\
            & $r$, $p^{2^{m - 3}}r$, $qr$, $p^{2^{m - 3}}qr$ & & & \\
            \hline
        \end{tabular}
        \caption{The set $\text{inv}(\mathcal{N}_{II, n}(m))$ of involutions in $\mathcal{N}_{II, n}(m)$, where $i = 0, 1, \ldots, 2^{m - 2} - 1 \mod{2^{m - 2}}$, $k = 0, 2, \ldots, 2^{m - 2} - 2 \mod{2^{m - 2}}$. The number of involutions in the fourth column is expressed as the sum of the number of central and non-central involutions. A blank entry in the last column denotes that the group is generated by its involutions.}
        \label{tbl:NII,invslist}
    \end{table}

{
    \LTcapwidth=\textwidth
    \footnotesize
    \begin{longtabu}[h]{|c|c|c|c|c|}
        \hline
        \multirow{2}{*}{$n$} & \multicolumn{2}{c|}{ $\text{inv}(\mathcal{G})$} & \multirow{2}{*}{$|\text{inv}(\mathcal{G})|$} & \multirow{2}{*}{$\varphi_X$} \\
        \cline{2-3}
        & {\textbf{Central}} & {\textbf{Non-Central}} & & \\
        \hline
        \hline
        \multicolumn{5}{|c|}{ $\mathcal{G} = \mathcal{M}_{II-A, n}(m)$} \\
        \hline
        \multirow{2}{*}{$1, 2$} & $p^{2^{m - 4}}$, $q^2$, $p^{2^{m - 4}}q^2$ & \multirow{2}{*}{---} & \multirow{2}{*}{$7 + 0$} & \multirow{9}{*}{$\varphi_{\{q\}}$} \\
        & $r$, $p^{2^{m - 4}}r$, $q^2r$, $p^{2^{m - 4}}q^2r$ & & & \\
        \cline{1-4}
        $3$ & $p^{2^{m - 4}}$, $r$, $p^{2^{m - 4}}r$ & $q^2$, $p^{2^{m - 4}}q^2$, $q^2r$, $p^{2^{m - 4}}q^2r$ & $3 + 4$ & \\
        \cline{1-4}
        \multirow{2}{*}{$4, 5$} & \multirow{2}{*}{$p^{2^{m - 4}}$} & $q^2$, $p^{2^{m - 4}}q^2$, $r$, $p^{2^{m - 4}}r$, & \multirow{2}{*}{1 + 6} & \\
        & & $q^2r$, $p^{2^{m - 4}}q^2r$ & & \\
        \cline{1-4}
        $6 - 11$ & \multirow{9}{*}{$p^{2^{m - 4}}$, $q^2$, $p^{2^{m - 4}}q^2$} & $r$, $p^{2^{m - 4}}r$, $q^2r$, $p^{2^{m - 4}}q^2r$ & $3 + 4$ & \\
        \cline{1-1}
        \cline{3-4}
        $12 - 16$ & & $p^kr$, $p^kq^2r$ & $3 + 2^{m - 3}$ & \\
        \cline{1-1}
        \cline{3-4}
        $17 - 29$ & & --- & $3 + 0$ & \\
        \cline{1-1}
        \cline{3-4}
        $30 - 32$ & & $p^ir$, $p^iq^2r$ & \multirow{2}{*}{$3 + 2^{m - 2}$} & \\
        \cline{1-1}
        \cline{3-3}
        \cline{5-5}
        $33 - 35$ & & $p^kr$, $p^kqr$, $p^kq^2r$, $p^kq^3r$ & & \multirow{3}{*}{$\varphi_{\{p\}}$} \\
        \cline{1-1}
        \cline{3-4}
        \multirow{2}{*}{$36 - 38$} & & $r$, $p^{2^{m - 4}}r$, $qr$, $p^{2^{m - 4}}qr$, & \multirow{2}{*}{$3 + 8$} & \\
        & & $q^2r$, $p^{2^{m - 4}}q^2r$, $q^3r$, $p^{2^{m - 4}}q^3r$ & & \\
        \cline{1-1}
        \cline{3-4}
        \cline{5-5}
        \newpage
        \cline{1-1}
        \cline{3-4}
        \cline{5-5}
        $39$ & & $p^ir$, $p^iqr$, $p^iq^2r$, $p^iq^3r$ & $3 + 2^{m - 1}$ & \multirow{2}{*}{---} \\
        \cline{1-1}
        \cline{3-4}
        $40$ & & $p^ir$, $p^kqr$, $p^iq^2r$, $p^kq^3r$ & $3 + (2^{m - 3} + 2^{m - 2})$ & \\
        \hline
        $41, 42$ & \multirow{7}{*}{$p^{2^{m - 4}}$} & $q^2$, $p^{2^{m - 4}}q^2$, $p^ir$ & $1 + (2 + 2^{m - 3})$ & \multirow{3}{*}{$\varphi_{\{q\}}$} \\
        \cline{1-1}
        \cline{3-4}
        $43$ & & $q^2$, $p^{2^{m - 4}}q^2$, $p^ir$, $p^kq^2r$ & $1 + (2 + 2^{m - 4} + 2^{m - 3})$ & \\
        \cline{1-1}
        \cline{3-4}
        $44$ & & $q^2$, $p^{2^{m - 4}}q^2$, $p^kr$ & $1 + (2 + 2^{m - 4})$ & \\
        \cline{1-1}
        \cline{3-5}
        \multirow{4}{*}{$45, 46$} & & $q^2$, $p^{2^{m - 4}}q^2$, $p^kr$, $r$, & \multirow{4}{*}{$1 + 10$} & \multirow{4}{*}{$\varphi_{\{p\}}$} \\
        & & $p^{2^{m - 4}}r$, $p^{3 \cdot 2^{m - 6}}qr$, $p^{7 \cdot 2^{m - 6}}qr$, & & \\
        & & $p^{2^{m - 5}}q^2r$, $p^{3 \cdot 2^{m - 5}}q^2r$, & & \\
        & & $p^{2^{m - 6}}q^3r$, $p^{5 \cdot 2^{m - 6}}q^3r$  & & \\
        \hline

        \multicolumn{5}{|c|}{ $\mathcal{G} = \mathcal{M}_{II-B, n}(m)$} \\
        \hline
        \multirow{2}{*}{$1, 2$} & $p^{2^{m - 4}}$, $q^2$, $p^{2^{m - 4}}q^2$, & \multirow{2}{*}{---} & \multirow{2}{*}{$7 + 0$} & \multirow{9}{*}{$\varphi_{\{p, q\}}$} \\
        & $r$, $p^{2^{m - 4}}r$, $q^2r$, $p^{2^{m - 4}}q^2r$ & & & \\
        \cline{1-4}
        $3$ & $p^{2^{m - 4}}$, $r$, $p^{2^{m - 4}}r$ & $q^2$, $p^{2^{m - 4}}q^2$, $q^2r$, $p^{2^{m - 4}}q^2r$ & $3 + 4$ & \\
        \cline{1-4}
        \multirow{2}{*}{$4, 5$} & \multirow{2}{*}{$p^{2^{m - 4}}$} & $q^2$, $p^{2^{m - 4}}q^2$, $r$, $p^{2^{m - 4}}r$, & \multirow{2}{*}{$1 + 6$} & \\
        & & $q^2r$, $p^{2^{m - 4}}q^2r$ & & \\
        \cline{1-4}
        $6 - 12$ & \multirow{10}{*}{$p^{2^{m - 4}}$, $q^2$, $p^{2^{m - 4}}q^2$} & $r$, $p^{2^{m - 4}}r$, $q^2r$, $p^{2^{m - 4}}q^2r$ & $3 + 4$ & \\
        \cline{1-1}
        \cline{3-4}
        \multirow{2}{*}{$13 - 15$} & & $r$, $p^{2^{m - 4}}r$, $q^2r$, $p^{2^{m - 4}}q^2r$, & \multirow{2}{*}{$3 + (4 + 2^{m - 3})$} & \\
        & & $p^jqr$, $p^jq^3r$ & & \\
        \cline{1-1}
        \cline{3-4}
        $16 - 18$ & & --- & $3 + 0$ & \\
        \cline{1-1}
        \cline{3-5}
        \multirow{2}{*}{$19, 20$} & & $r$, $p^{2^{m - 4}}r$, $q^2r$, $p^{2^{m - 4}}q^2r$, & \multirow{2}{*}{$3 + (4 + 2^{m - 2})$} & \multirow{2}{*}{---} \\
        & & $p^iqr$, $p^iq^3r$ & & \\
        \cline{1-1}
        \cline{3-5}
        $21 - 23$ & & $p^kqr$, $p^kq^3r$ & $3 + 2^{m - 3}$ & \multirow{7}{*}{$\varphi_{\{p\}}$}  \\
        \cline{1-1}
        \cline{3-4}
        \multirow{2}{*}{$24, 25$} & & $r$, $p^{2^{m - 4}}r$, $q^2r$, $p^{2^{m - 4}}q^2r$, & \multirow{2}{*}{$3 + (4 + 2^{m - 3})$} & \\
        & & $p^kqr$, $p^kq^3r$ & & \\
        \cline{1-4}
        \multirow{3}{*}{$26$} & \multirow{4}{*}{$p^{2^{m - 4}}$} & $q^2$, $p^{2^{m - 4}}q^2$, $r$, & \multirow{3}{*}{$1 + (6 + 2^{m - 3})$} & \\
        & & $p^{2^{m - 4}}r$, $q^2r$, $p^{2^{m - 4}}q^2r$, & & \\
        & & $p^kqr$, $p^kq^3r$ & & \\
        \cline{1-1}
        \cline{3-4}
        $27, 28$ & & $q^2$, $p^{2^{m - 4}}q^2$, $p^{{k_1}}r$, $p^{{k_2}}q^2r$ & $1 + (2 + 2^{m - 4})$ & \\
        \hline

        \multicolumn{5}{|c|}{ $\mathcal{G} = \mathcal{M}_{II-C, n}(m)$} \\
        \hline
        \multirow{2}{*}{$1$} & \multirow{2}{*}{$p^{2^{m - 4}}$, $r$, $p^{2^{m - 4}}r$} & $p^{2^{m - 5}}q^2$, $p^{3 \cdot 2^{m - 5}}q^2$, & \multirow{2}{*}{$3 + 4$} & \multirow{8}{*}{$\varphi_{\{p, q\}}$} \\
        & & $p^{2^{m - 5}}q^2r$, $p^{3 \cdot 2^{m - 5}}q^2r$ & & \\
        \cline{1-4}
        \multirow{2}{*}{$2$} & \multirow{8}{*}{$p^{2^{m - 4}}$} & $p^{2^{m - 5}}q^2$, $p^{3 \cdot 2^{m - 5}}q^2$, $r$, $p^{2^{m - 4}}r$,  & \multirow{4}{*}{$1 + 6$} & \\
        & & $p^{2^{m - 5}}q^2r$, $p^{3 \cdot 2^{m - 5}}q^2r$ & & \\
        \cline{1-1}
        \cline{3-3}
        \multirow{2}{*}{$3$} & & $p^{2^{m - 5}}q^2$, $p^{3 \cdot 2^{m - 5}}q^2$, $r$, $p^{2^{m - 4}}r$, & & \\
        & & $q^2r$, $p^{2^{m - 4}}q^2r$ & & \\
        \cline{1-1}
        \cline{3-4}
        \newpage
        \cline{1-1}
        \cline{3-4}
        \multirow{2}{*}{$4$} & & $p^{2^{m - 5}}q^2$, $p^{3 \cdot 2^{m - 5}}q^2$, $r$, $p^{2^{m - 4}}r$, & \multirow{2}{*}{$1 + (6 + 2^{m - 3})$} & \\
        & & $q^2r$, $p^{2^{m - 4}}q^2r$, $p^jqr$, $p^jq^3r$ & & \\
        \cline{1-1}
        \cline{3-5}
        \multirow{2}{*}{$5$} & & $p^{2^{m - 5}}q^2$, $p^{3 \cdot 2^{m - 5}}q^2$, $r$, $p^{2^{m - 4}}r$, & \multirow{2}{*}{$1 + (6 + 2^{m - 2})$} & \multirow{2}{*}{---} \\
        & & $q^2r$, $p^{2^{m - 4}}q^2r$, $p^iqr$, $p^iq^3r$ & & \\
        \hline

        \multicolumn{5}{|c|}{ $\mathcal{G} = \mathcal{M}_{II-D, n}(m)$} \\
        \hline
        \multirow{3}{*}{$1$} & \multirow{9}{*}{$p^{2^{m - 4}}$} & $q^2$, $p^{2^{m - 4}}q^2$, $r$, $p^{2^{m - 4}}r$, & \multirow{3}{*}{$1 + (6 + 2^{m - 3})$} & \multirow{6}{*}{$\varphi_{\{p, q\}}$} \\
        & &  $q^2r$, $p^{2^{m - 4}}q^2r$, & & \\
        & & $p^{{j_1}}q$, $p^{{j_2}}q^3$, $p^{{j_2}}r$, $p^{{j_1}}q^3r$ & & \\
        \cline{1-1}
        \cline{3-4}
        \multirow{3}{*}{$2$} & & $q^2$, $p^{2^{m - 4}}q^2$, $r$, $p^{2^{m - 4}}r$, & \multirow{3}{*}{$1 + (6 + 2^{m - 3})$} & \\
        & &  $q^2r$, $p^{2^{m - 4}}q^2r$, & & \\
        & & $p^{{j_1}}q$, $p^{{j_2}}q^3$, $p^jqr$ & & \\
        \cline{1-1}
        \cline{3-5}
        \multirow{3}{*}{$3$} & & $q^2$, $p^{2^{m - 4}}q^2$, $r$, $p^{2^{m - 4}}r$, & \multirow{3}{*}{$1 + (6 + 2^{m - 2})$} & \multirow{3}{*}{---} \\
        & &  $q^2r$, $p^{2^{m - 4}}q^2r$, & & \\
        & & $p^{{j_1}}q$, $p^{{j_2}}q^3$, $p^kqr$, $p^iq^3r$ & & \\
        \hline
        \multirow{2}{*}{$4$} & \multirow{2}{*}{$p^{2^{m - 4}}$, $r$, $p^{2^{m - 4}}r$} & $q^2$, $p^{2^{m - 4}}q^2$, $q^2r$, $p^{2^{m - 4}}q^2r$, & \multirow{2}{*}{$3 + (4 + 2^{m - 3})$} & \multirow{13}{*}{$\varphi_{\{p, q\}}$} \\
        & & $p^{{j_1}}q$, $p^{{j_2}}q^3$, $p^{{j_1}}r$, $p^{{j_2}}q^3r$ & & \\
        \cline{1-4}
        \multirow{2}{*}{$5$} & $p^{2^{m - 4}}$, $q^2$, $p^{2^{m - 4}}q^2$, $r$ & \multirow{2}{*}{$p^jq$, $p^jq^3$, $p^jqr$, $p^jq^3r$} & \multirow{2}{*}{$7 + 2^{m - 2}$} & \\
        & $p^{2^{m - 4}}r$, $q^2r$, $p^{2^{m - 4}}q^2r$ & & & \\
        \cline{1-4}
        \multirow{2}{*}{$6, 7$} & \multirow{17}{*}{$p^{2^{m - 4}}$, $q^2$, $p^{2^{m - 4}}q^2$} & $r$, $p^{2^{m - 4}}r$, $q^2r$ $p^{2^{m - 4}}q^2r$, & \multirow{2}{*}{$3 + (4 + 2^{m - 2})$} & \\
        & & $p^jq$, $p^jq^3$, $p^jqr$, $p^jq^3r$ & & \\
        \cline{1-1}
        \cline{3-4}
        $8$ & & --- & $3 + 0$ & \\
        \cline{1-1}
        \cline{3-4}
        $9, 10$ & & $p^kr$, $p^kq^2r$ & $3 + 2^{m - 3}$ & \\
        \cline{1-1}
        \cline{3-4}
        $11$ & & $p^jq$, $p^jq^3$, $p^kr$, $p^kq^2r$ & $3 + 2^{m - 2}$ & \\
        \cline{1-1}
        \cline{3-4}
        \multirow{2}{*}{$12 - 14$} & & $r$, $p^{2^{m - 4}}r$, $q^2r$, $p^{2^{m - 4}}q^2r$, & \multirow{2}{*}{$3 + (4 + 2^{m - 3})$} & \\
        & & $p^jq$, $p^jq^3$ & & \\
        \cline{1-1}
        \cline{3-4}
        $15$ & & $p^jq$, $p^jq^3$ & $3 + 2^{m - 3}$ & \\
        \cline{1-1}
        \cline{3-4}
        $16$ & & $p^jq$, $p^jq^3$, $p^kr$, $p^kq^2r$ & $3 + 2^{m - 2}$ & \\
        \cline{1-1}
        \cline{3-5}
        \multirow{2}{*}{$17$} & & $r$, $p^{2^{m - 4}}r$, $q^2r$ $p^{2^{m - 4}}q^2r$, & \multirow{2}{*}{$3 + (4 + 2^{m - 2})$} & \multirow{5}{*}{---} \\
        & & $p^jq$, $p^jq^3$, $p^kqr$, $p^kq^3r$ & & \\
        \cline{1-1}
        \cline{3-4}
        $18$ & & $p^jq$, $p^jq^3$, $p^ir$, $p^iq^2r$ & $3 + (2^{m - 3} + 2^{m - 2})$ & \\
        \cline{1-1}
        \cline{3-4}
        \multirow{2}{*}{$19$} & & $r$, $p^{2^{m - 4}}r$, $q^2r$, $p^{2^{m - 4}}q^2r$, & \multirow{2}{*}{$3 + (4 + 2^{m - 3} + 2^{m - 2})$} & \\
        & & $p^jq$, $p^jq^3$, $p^iqr$, $p^iq^3r$ & & \\
        \cline{1-1}
        \cline{3-5}
        \multirow{2}{*}{$20$} & & $r$, $p^{2^{m - 4}}r$, $q^2r$, $p^{2^{m - 4}}q^2r$, & \multirow{2}{*}{$3 + (4 + 2^{m - 3})$} & \multirow{5}{*}{$\varphi_{\{p\}}$} \\
        & & $p^kqr$, $p^kq^3r$ & & \\
        \cline{1-1}
        \cline{3-4}
        $21, 22$ & & $r$, $p^{2^{m - 4}}r$, $q^2r$, $p^{2^{m - 4}}q^2r$ & $3 + 4$ & \\
        \cline{1-4}
        \newpage
        \cline{1-4}
        \multirow{2}{*}{$23$} & $p^{2^{m - 4}}$, $q^2$, $p^{2^{m - 4}}q^2, r$ & \multirow{2}{*}{---} & \multirow{2}{*}{$7 + 0$} & \\
        & $p^{2^{m - 4}}r$, $q^2r$, $p^{2^{m - 4}}q^2r$ & & & \\
        \hline

        \multicolumn{5}{|c|}{ $\mathcal{G} = \mathcal{M}_{II-E, n}(m)$} \\
        \hline
        \multirow{2}{*}{$1$} & \multirow{5}{*}{$p^{2^{m - 4}}$} & $q^2$, $p^{2^{m - 4}}q^2$, $p^ir$, & \multirow{2}{*}{$1 +(2 + 2^{m - 4} + 2^{m - 2})$} & \multirow{2}{*}{---} \\
        & & $p^jqr$, $p^jq^2r$, $p^jq^3r$ & & \\
        \cline{1-1}
        \cline{3-5}
        $2$ & & $q^2$, $p^{2^{m - 4}}q^2$, $p^ir$, $p^jq^2r$ & $1 + (2 + 2^{m - 4} + 2^{m - 3})$ & \multirow{7}{*}{$\varphi_{\{q\}}$} \\
        \cline{1-1}
        \cline{3-4}
        $3$ & & $q^2$, $p^{2^{m - 4}}q^2$, $p^kr$ & $1 + (2 + 2^{m - 4})$ & \\
        \cline{1-1}
        \cline{3-4}
        $4$ & & $q^2$, $p^{2^{m - 4}}q^2$, $r$, $q^2r$, $p^{2^{m - 4}}q^2r$ & $1 + 6$ &  \\
        \cline{1-4}
        \multirow{2}{*}{$5$} & $p^{2^{m - 4}}$, $q^2$, $p^{2^{m - 4}}q^2$ & \multirow{2}{*}{---} & \multirow{2}{*}{$7 + 0$} & \\
        & $r$, $p^{2^{m - 4}}r$, $q^2r$, $p^{2^{m - 4}}q^2r$ & & & \\
        \cline{1-4}
        $6$ & $p^{2^{m - 4}}$, $r$, $p^{2^{m - 4}}r$ & $q^2$, $p^{2^{m - 4}}q^2$, $q^2r$, $p^{2^{m - 4}}q^2r$ & \multirow{2}{*}{$3 + 4$} & \\
        \cline{1-3}
        $7, 8$ & $p^{2^{m - 4}}$, $q^2$, $p^{2^{m - 4}}q^2$ & $r$, $p^{2^{m - 4}}r$, $q^2r$, $p^{2^{m - 4}}q^2r$ & & \\
        \hline

        \multicolumn{5}{|c|}{ $\mathcal{G} = \mathcal{M}_{II-F, n}(m)$} \\
        \hline
        \multirow{3}{*}{$1$} & \multirow{8}{*}{$p^{2^{m - 4}}$, $r$, $p^{2^{m - 4}}r$} & $p^{2^{m - 5}}q^2$, $p^{3 \cdot 2^{m - 5}}q^2$, & \multirow{3}{*}{$3 + (4 + 2^{m - 3})$} & \multirow{18}{*}{$\varphi_{\{p, q\}}$} \\
        & & $p^{2^{m - 5}}q^2r$, $p^{3 \cdot 2^{m - 5}}q^2r$, & & \\
        & & $p^{{j_2}}q$, $p^{{j_1}}q^3$, $p^{{j_2}}qr$, $p^{{j_1}}q^3r$ & & \\
        \cline{1-1}
        \cline{3-4}
        \multirow{2}{*}{$2$} & & $p^{2^{m - 5}}q^2$, $p^{3 \cdot 2^{m - 5}}q^2$, & \multirow{2}{*}{$3 + 4$} & \\
        & & $p^{2^{m - 5}}q^2r$, $p^{3 \cdot 2^{m - 5}}q^2r$, & & \\
        \cline{1-1}
        \cline{3-4}
        \multirow{3}{*}{$3$} & & $p^{2^{m - 5}}q^2$, $p^{3 \cdot 2^{m - 5}}q^2$, & \multirow{3}{*}{$3 + (4 + 2^{m - 2})$} & \\
        & & $p^{2^{m - 5}}q^2r$, $p^{3 \cdot 2^{m - 5}}q^2r$, & & \\
        & & $p^jq$, $p^jq^3$, $p^jqr$, $p^jq^3r$ & & \\
        \cline{1-4}
        \multirow{3}{*}{$4$} & \multirow{16}{*}{$p^{2^{m - 4}}$} & $p^{2^{m - 5}}q^2$, $p^{3 \cdot 2^{m - 5}}q^2$, $r$, $p^{2^{m - 4}}r$, & \multirow{5}{*}{$1 + (6 + 2^{m - 3})$} & \\
        & & $p^{2^{m - 5}}q^2r$, $p^{3 \cdot 2^{m - 5}}q^2r$, & & \\
        & & $p^{{j_2}}q$, $p^{{j_1}}q^3$, $p^{{j_1}}qr$, $p^{{j_2}}q^3r$ & & \\
        \cline{1-1}
        \cline{3-3}
        \multirow{2}{*}{$5$} & & $p^{2^{m - 5}}q^2$, $p^{3 \cdot 2^{m - 5}}q^2$, $r$, $p^{2^{m - 4}}r$, & & \\
        & & $p^{2^{m - 5}}q^2r$, $p^{3 \cdot 2^{m - 5}}q^2r$, $p^{j}qr$, $p^jq^3r$ & & \\
        \cline{1-1}
        \cline{3-4}
        \multirow{2}{*}{$6$} & & $p^{2^{m - 5}}q^2$, $p^{3 \cdot 2^{m - 5}}q^2$, $r$, $p^{2^{m - 4}}r$, & \multirow{2}{*}{$1 + (6 + 2^{m - 4})$} & \\
        & & $q^2r$, $p^{2^{m - 4}}q^2r$, $p^{{j_2}}q$, $p^{{j_1}}q^3$ & & \\
        \cline{1-1}
        \cline{3-4}
        \multirow{3}{*}{$7$} & & $p^{2^{m - 5}}q^2$, $p^{3 \cdot 2^{m - 5}}q^2$, $r$, $p^{2^{m - 4}}r$, & \multirow{6}{*}{$1 + (6 + 2^{m - 4} + 2^{m - 3})$} & \\
        & & $q^2r$, $p^{2^{m - 4}}q^2r$, & & \\
        & & $p^{{j_2}}q$, $p^{{j_1}}q^3$, $p^jqr$, $p^jq^3r$, & & \\
        \cline{1-1}
        \cline{3-3}
        \cline{5-5}
        \multirow{3}{*}{$8$} & & $p^{2^{m - 5}}q^2$, $p^{3 \cdot 2^{m - 5}}q^2$, $r$, $p^{2^{m - 4}}r$, & & \multirow{6}{*}{---} \\
        & & $q^2r$, $p^{2^{m - 4}}q^2r$, & & \\
        & & $p^{{j_2}}q$, $p^{{j_1}}q^3$, $p^kqr$, $p^kq^3r$ & & \\
        \cline{1-1}
        \cline{3-4}
        \newpage
        \cline{1-1}
        \cline{3-4}
        \multirow{3}{*}{$9$} & & $p^{2^{m - 5}}q^2$, $p^{3 \cdot 2^{m - 5}}q^2$, $r$, $p^{2^{m - 4}}r$, & \multirow{3}{*}{$1 + (6 + 2^{m - 4} + 2^{m - 2})$} & \\
        & & $q^2r$, $p^{2^{m - 4}}q^2r$, & & \\
        & & $p^{{j_2}}q$, $p^{{j_1}}q^3$, $p^iqr$, $p^iq^3r$ & & \\
        \hline

        \multicolumn{5}{|c|}{ $\mathcal{G} = \mathcal{M}_{II-G, n}(m)$} \\
        \hline
        \multirow{3}{*}{$1$} & \multirow{2}{*}{$p^{2^{m - 4}}$, $p^{2^{m - 4} - 2}q^2$, $p^{2^{m - 3} - 2}q^2$,} & $p^{2^{m - 4} - 1}q$, $p^{2^{m - 3} - 1}q$, $p^{2^{m - 4} - 3}q^3$, & \multirow{3}{*}{$7 + 8$} & \multirow{29}{*}{$\varphi_{\{p, q\}}$} \\
        & \multirow{2}{*}{$r$, $p^{2^{m - 4}}r$, $p^{2^{m - 4} - 2}q^2r$, $p^{2^{m - 3} - 2}q^2r$} & $p^{2^{m - 3} - 3}q^3$, $p^{2^{m - 4} - 1}qr$, $p^{2^{m - 3} - 1}qr$, & & \\
        & & $p^{2^{m - 4} - 3}q^3r$, $p^{2^{m - 3} - 3}q^3r$ & & \\
        \cline{1-4}
        \multirow{4}{*}{$2$} & \multirow{8}{*}{$p^{2^{m - 4}}$, $p^{2^{m - 4} - 2}q^2$, $p^{2^{m - 3} - 2}q^2$} & $p^{2^{m - 4} - 1}q$, $p^{2^{m - 3} - 1}q$, $p^{2^{m - 4} - 3}q^3$, & \multirow{16}{*}{$3 + 12$} & \\
        & & $p^{2^{m - 3} - 3}q^3$, $r$, $p^{2^{m - 4}}r$, & & \\
        & & $p^{2^{m - 4} - 1}qr$, $p^{2^{m - 3} - 1}qr$, $p^{2^{m - 4} - 2}q^2r$, & & \\
        & & $p^{2^{m - 3} - 2}q^2r$, $p^{2^{m - 4} - 3}q^3r$, $p^{2^{m - 3} - 3}q^3r$ & & \\
        \cline{1-1}
        \cline{3-3}
        \multirow{6}{*}{$3$} & & $p^{2^{m - 4} - 1}q$, $p^{2^{m - 3} - 1}q$, $p^{2^{m - 4} - 3}q^3$, & & \\
        & & $p^{2^{m - 3} - 3}q^3$, $r$, $p^{2^{m - 4}}r$, & & \\
        & & $p^{2^{m - 5} - 1}qr$, $p^{3 \cdot 2^{m - 5} - 1}qr$, $p^{2^{m - 4} - 2}q^2r$, & & \\
        & & $p^{2^{m - 3} - 2}q^2r$, $p^{2^{m - 5} - 3}q^3r$, $p^{3 \cdot 2^{m - 5} - 3}q^3r$ & & \\
        \cline{1-3}
        \multirow{4}{*}{$4$} & \multirow{8}{*}{$p^{2^{m - 4}}$, $r$, $p^{2^{m - 4} }r$} & $p^{2^{m - 5} - 1}q$, $p^{3 \cdot 2^{m - 5} - 1}q$, $p^{2^{m - 4} - 2}q^2$, & & \\
        & &  $p^{2^{m - 3} - 2}q^2$, $p^{2^{m - 5} - 3}q^3$, $p^{3 \cdot 2^{m - 5} - 3}q^3$, & & \\
        & & $p^{2^{m - 5} - 1}qr$, $p^{3 \cdot 2^{m - 5} - 1}qr$, $p^{2^{m - 4} - 2}q^2r$, & & \\
        & & $p^{2^{m - 3} - 2}q^2r$, $p^{2^{m - 5} - 3}q^3r$, $p^{3 \cdot 2^{m - 5} - 3}q^3r$ & & \\
        \cline{1-1}
        \cline{3-3}
        \multirow{4}{*}{$5$} & & $p^{3 \cdot 2^{m - 6} - 1}q$, $p^{7 \cdot 2^{m - 6} - 1}q$, $p^{2^{m - 4} - 2}q^2$, & & \\
        & & $p^{2^{m - 3} - 2}q^2$, $p^{2^{m - 6} - 3}q^3$, $p^{5 \cdot 2^{m - 6} - 3}q^3$, & & \\
        & & $p^{3 \cdot 2^{m - 6} - 1}qr$, $p^{7 \cdot 2^{m - 6} - 1}qr$, $p^{2^{m - 4} - 2}q^2r$, & & \\
        & & $p^{2^{m - 3} - 2}q^2r$, $p^{2^{m - 6} - 3}q^3r$, $p^{5 \cdot 2^{m - 6} - 3}q^3r$ & & \\
        \cline{1-4}
        \multirow{5}{*}{$6$} & \multirow{10}{*}{$p^{2^{m - 4}}$} & $p^{2^{m - 5} - 1}q$, $p^{3 \cdot 2^{m - 5} - 1}q$, $p^{2^{m - 4} - 2}q^2$, & \multirow{10}{*}{$1 + 14$} & \\
        & & $p^{2^{m - 3} - 2}q^2$, $p^{2^{m - 5} - 3}q^3$, $p^{3 \cdot 2^{m - 5} - 3}q^3$, & & \\
        & & $r$, $p^{2^{m - 4}}r$, $p^{2^{m - 4} - 1}qr$, & & \\
        & & $p^{2^{m - 3} - 1}qr$, $p^{2^{m - 4} - 2}q^2r$, $p^{2^{m - 3} - 2}q^2r$, & & \\
        & & $p^{2^{m - 4} - 3}q^3r$, $p^{2^{m - 3} - 3}q^3r$ & & \\
        \cline{1-1}
        \cline{3-3}
        \multirow{5}{*}{$7$} & & $p^{3 \cdot 2^{m - 6} - 1}q$, $p^{7 \cdot 2^{m - 6} - 1}q$, $p^{2^{m - 4} - 2}q^2$, & & \\
        & & $p^{2^{m - 3} - 2}q^2$, $p^{2^{m - 6} - 3}q^3$ , $p^{5 \cdot 2^{m - 6} - 3}q^3$, & & \\
        & & $r$, $p^{2^{m - 4}}r$, $p^{2^{m - 6} - 1}qr$, & & \\
        & & $p^{5 \cdot 2^{m - 6} - 1}qr$, $p^{2^{m - 4} - 2}q^2r$, $p^{2^{m - 3} - 2}q^2r$, & & \\
        & & $p^{3 \cdot 2^{m - 6} - 3}q^3r$, $p^{7 \cdot 2^{m - 6} - 3}q^3r$ & & \\
        \hline

        \newpage
        \hline
        \multicolumn{5}{|c|}{ $\mathcal{G} = \mathcal{M}_{III, n}(m)$} \\
        \hline
        \multirow{4}{*}{$1$} & \multirow{10}{*}{$p^{2^{m - 4}}$, $q$, $p^{2^{m - 4}}q$} & $r$, $p^{2^{m - 4}}r$, $qr$,  $p^{2^{m - 4}}qr$, & \multirow{4}{*}{$3 + 12$} & \multirow{6}{*}{$\varphi_{\{p\}}$} \\
        & & $s$, $p^{2^{m - 4}}s$, $qs$, $p^{2^{m - 4}}qs$, & & \\
        & & $p^{2^{m - 5}}rs$, $p^{2^{m - 5}}qrs$, & & \\
        & & $p^{3 \cdot 2^{m - 5}}rs$, $p^{3 \cdot 2^{m - 5}}qrs$ & & \\
        \cline{1-1}
        \cline{3-4}
        \multirow{2}{*}{$2$} & & $r$, $p^{2^{m - 4}}r$, $qr$, $p^{2^{m - 4}}qr$, & \multirow{2}{*}{$3 + (4 + 2^{m - 3})$} & \\
        & & $p^ks$, $p^kqs$ & & \\
        \cline{1-1}
        \cline{3-5}
        \multirow{2}{*}{$3$} & & $r$, $p^{2^{m - 4}}r$, $qr$, $p^{2^{m - 4}}qr$, & \multirow{2}{*}{$3 + (4 + 2^{m - 2})$} & \multirow{5}{*}{---} \\
        & & $p^is$, $p^iqs$ & & \\
        \cline{1-1}
        \cline{3-4}
        \multirow{2}{*}{$4$} & & $r$, $p^{2^{m - 4}}r$, $qr$, $p^{2^{m - 4}}qr$, & \multirow{2}{*}{$3 + (4 + 2^{m - 3} + 2^{m - 2})$} & \\
        & & $p^ks$, $p^kqs$, $p^irs$, $p^iqrs$ & & \\
        \cline{1-2}
        \cline{3-4}
        $5$ & & $p^is$, $p^iqs$, $p^irs$, $p^iqrs$ & $7 + 2^{m - 1}$ & \\
        \cline{1-1}
        \cline{3-5}
        $6$ & \multirow{2}{*}{$p^{2^{m - 4}}$, $q$, $p^{2^{m - 4}}q$, $r$, $qr$,} & $p^ks$, $p^kqs$, $p^krs$, $p^kqrs$ & $7 + 2^{m - 2}$ & \multirow{8}{*}{$\varphi_{\{p\}}$} \\
        \cline{1-1}
        \cline{3-4}
        $7$ & \multirow{2}{*}{$p^{2^{m - 4}}r$, $p^{2^{m - 4}}qr$} & --- & $7 + 0$ & \\
        \cline{1-1}
        \cline{3-4}
        \multirow{2}{*}{$8$} & & $s$, $p^{2^{m - 4}}s$, $qs$, $p^{2^{m - 4}}qs$, $rs$, & \multirow{2}{*}{$7 + 8$} & \\
        & & $p^{2^{m - 4}}rs$, $qrs$, $p^{2^{m - 4}}qrs$ & & \\
        \cline{1-4}
        \multirow{4}{*}{$9$} & $p^{2^{m - 4}}$, $q$, $p^{2^{m - 4}}q$, $r$, $p^{2^{m - 4}}r$, & \multirow{4}{*}{---} & \multirow{4}{*}{$15 + 0$} & \\
        & $qr$, $p^{2^{m - 4}}qr$, $s$, $p^{2^{m - 4}}s$, & & & \\
        & $qs$, $p^{2^{m - 4}}qs$, $rs$, $p^{2^{m - 4}}rs$, & & & \\
        & $qrs$, $p^{2^{m - 4}}qrs$ & & & \\
        \hline
        \multirow{2}{*}{$10$} & \multirow{13}{*}{$p^{2^{m - 4}}$} & $q$, $p^{2^{m - 4}}q$, $p^{2^{m - 5}}s$, $p^{3 \cdot 2^{m - 5}}s$, & \multirow{2}{*}{$1 + (6 + 2^{m - 3} + 2^{m - 2})$} & \multirow{5}{*}{---} \\
        & & $qs$, $p^{2^{m - 4}}qs$, $p^ir$, $p^iqr$, $p^iqrs$ & & \\
        \cline{1-1}
        \cline{3-4}
        \multirow{3}{*}{$11$} & & $q$, $p^{2^{m - 4}}q$, $p^{2^{m - 5}}s$, $p^{3 \cdot 2^{m - 5}}s$, & \multirow{3}{*}{$1 + (6 + 2^{m - 2})$} & \\
        & & $qs$, $p^{2^{m - 4}}qs$, $p^kr$, $p^kqr$, & & \\
        & & $p^jrs$, $p^kqrs$ & & \\
        \cline{1-1}
        \cline{3-5}
        \multirow{4}{*}{$12$} & & $q$, $p^{2^{m - 4}}q$, $r$, $p^{2^{m - 4}}r$, & \multirow{4}{*}{$1 + 14$} & \multirow{4}{*}{$\varphi_{\{p\}}$} \\
        & & $qr$, $p^{2^{m - 4}}qr$, $s$, $p^{2^{m - 4}}s$, & & \\
        & & $p^{2^{m - 5}}qs$, $p^{3 \cdot 2^{m - 5}}qs$, $rs$, $p^{2^{m - 4}}rs$, & & \\
        & & $p^{2^{m - 5}}qrs$, $p^{3 \cdot 2^{m - 5}}qrs$ & & \\
        \cline{1-1}
        \cline{3-5}
        \multirow{2}{*}{$13$} & & $q$, $p^{2^{m - 4}}q$, $r$, $p^{2^{m - 4}}r$, $qr$, & \multirow{2}{*}{$1 + (6 + 2^{m - 2})$} & \multirow{4}{*}{---} \\
        & & $p^{2^{m - 4}}qr$, $p^is$, $p^krs$, $p^jqrs$ & & \\
        \cline{1-1}
        \cline{3-4}
        \multirow{2}{*}{$14$} & & $q$, $p^{2^{m - 4}}q$, $p^is$, $p^{2^{m - 5}}rs$,  & \multirow{2}{*}{$1 + (6 + 2^{m - 3})$} & \\
        & & $p^{3 \cdot 2^{m - 5}}rs$, $qrs$, $p^{2^{m - 4}}qrs$ & & \\
        \hline
        \caption{ The set $\text{inv}(\mathcal{G})$ of involutions in $\mathcal{G}$, where
        $\mathcal{G} = \mathcal{M}_{II-A, n}(m)$,
        $\mathcal{M}_{II-B, n}(m)$,
        $\mathcal{M}_{II-C, n}(m)$,
        $\mathcal{M}_{II-D, n}(m)$,
        $\mathcal{M}_{II-E, n}(m)$,
        $\mathcal{M}_{II-F, n}(m)$,
        $\mathcal{M}_{II-G, n}(m)$,
        $\mathcal{M}_{III, n}(m)$, and
        $i = 0, 1, \ldots, 2^{m - 3} - 1 \mod{2^{m - 3}}$,
        $j = 1, 3, \ldots, 2^{m - 3} - 1 \mod{2^{m - 3}}$,
        $j_1 = 1, 5, \ldots, 2^{m - 3} - 3 \mod{2^{m - 3}}$,
        $j_2 = 3, 7, \ldots, 2^{m - 3} - 1 \mod{2^{m - 3}}$,
        $k = 0, 2, \ldots, 2^{m - 3} - 2 \mod{2^{m - 3}}$,
        $k_1 = 0, 4, \ldots, 2^{m - 3} - 4 \mod{2^{m - 3}}$,
        $k_2 = 2, 6, \ldots, 2^{m - 3} - 2 \mod{2^{m - 3}}$.
        The number of involutions in the fourth column is expressed as the sum of the number of central and non-central involutions. A blank entry in the last column denotes that the group is generated by its involutions.}
        \label{tbl:MII,MIII,invslist}
     \end{longtabu}
}
} 

    \pagebreak

    \section{Pairs of non-commuting involutory coset representatives of $Z(\mathcal{G})$}
\label{app:noncomminvs}

    \begin{table}[H]
        \centering
        \footnotesize
        \begin{tabu}{|l|l|l|}
            \hline
            \rowfont[r]{} $\mathcal{G}$ & $t_0$ & $t_1$ \\
            \hline
            \hline
            \multirow{2}{*}{$\mathcal{N}_{II, 3}(m)$}
            & $q$ & $p^ir$ \\
            \cline{2-3}
            & $p^ir$ & $p^{i'}r$ \\
            \hline

            \multirow{4}{*}{$\mathcal{N}_{II, 4}(m)$}
            & $q$ & $p^jqr$ \\
            \cline{2-3}
            & $p^kr$ & $p^{k'}r$, $p^jqr$, $p^{k'}qr$ \\
            \cline{2-3}
            & $p^jqr$ & $p^{j'}qr$ \\
            \cline{2-3}
            & $p^kqr$ & $p^{k'}qr$ \\
            \hline

            \multirow{1}{*}{$\mathcal{N}_{II, 5}(m)$}
            & $p^ir$ & $p^{i'}r$ \\
            \hline
        \end{tabu}
        \caption{The pairs of non-commuting involutory coset representatives $t_0$, $t_1$ of $Z(\mathcal{G})$, where
        $\mathcal{G} = \mathcal{N}_{II, 3}(m)$,
        $\mathcal{N}_{II, 4}(m)$,
        $\mathcal{N}_{II, 5}(m)$, and
        $i, i' = 0, 1, \ldots, 2^{m - 2} - 1$,
        $i' \neq i \mod{2^{m - 3}}$,
        $j, j' = 1, 3, \ldots, 2^{m - 2} - 1$,
        $j' \neq j \mod{2^{m - 3}}$,
        $k, k' = 0, 2, \ldots, 2^{m - 2} - 2$,
        $k' \neq k \mod{2^{m - 3}}$.}
        \label{tbl:N_3,4,5,noncomminvs}
    \end{table}

{
    \LTcapwidth=\textwidth
    \footnotesize
    \begin{longtabu}[h]{|l|l|l|}
        \hline
         \rowfont[r]{} $\mathcal{G}$ & $t_0$ & $t_1$ \\
         \hline
         \hline
         \multirow{2}{*}{$\mathcal{M}_{II-A, 39}(m)$}
         & $p^ir$ & $p^{i'}r$, $p^{\bar{i}}qr$ \\
         \cline{2-3}
         & $p^iqr$ & $p^{i'}qr$ \\
         \hline

         \multirow{2}{*}{$\mathcal{M}_{II-A, 40}(m)$}
         & $p^ir$ & $p^{i'}r$, $p^kqr$ \\
         \cline{2-3}
         & $p^kqr$ & $p^{k'}qr$ \\
         \hline

         \multirow{1}{*}{$\mathcal{M}_{II-B, 19}(m)$,}
         & $r$ & $p^iqr$ \\
         \cline{2-3}
         \multirow{1}{*}{$\mathcal{M}_{II-B, 20}(m)$}
         & $p^iqr$ & $p^{i'}qr$ \\
         \hline

         \multirow{5}{*}{$\mathcal{M}_{II-C, 5}(m)$}
         & $p^{2^{m - 5}}q^2$ & $r$, $q^2r$ \\
         \cline{2-3}
         & $r$ & $q^2r$, $p^iqr$, $p^iq^3r$ \\
         \cline{2-3}
         & $q^2r$ & $p^iqr$, $p^iq^3r$ \\
         \cline{2-3}
         & $p^iqr$ & $p^{i'}qr$, $p^{i''}q^3r$ \\
         \cline{2-3}
         & $p^iq^3r$ & $p^{i'}q^3r$ \\
         \hline

         \multirow{3}{*}{$\mathcal{M}_{II-D, 17}(m)$}
         & $r$ & $p^jq$, $p^kqr$  \\
         \cline{2-3}
         & $p^jq$ & $p^{j'}q$, $p^kqr$ \\
         \cline{2-3}
         & $p^kqr$ & $p^{k'}qr$ \\
         \hline

         \multirow{2}{*}{$\mathcal{M}_{II-D, 18}(m)$}
         & $p^jq$ & $p^{j'}q$, $p^ir$  \\
         \cline{2-3}
         & $p^ir$ & $p^{i'}r$  \\
         \hline

         \multirow{6}{*}{$\mathcal{M}_{II-E, 1}(m)$}
         & $q^2$ & $p^kr$ \\
         \cline{2-3}
         & $p^jr$ & $p^{j'}r$, $p^{\bar{j}}qr$, $p^{j'}q^2r$, $p^{\bar{j}}q^3r$ \\
         \cline{2-3}
         & $p^kr$ & $p^{k'}r$, $p^jqr$, $p^jq^2r$, $p^jq^3r$ \\
         \cline{2-3}
         & $p^jqr$ & $p^{j'}qr$, $p^{\bar{j}}q^2r$, $p^{j'}q^3r$ \\
         \cline{2-3}
         & $p^jq^2r$ & $p^{j'}q^2r$, $p^{\bar{j}}q^3r$ \\
         \cline{2-3}
         & $p^jq^3r$ & $p^{j'}q^3r$ \\
         \hline

         \multirow{7}{*}{$\mathcal{M}_{II-F, 8}(m)$}
         & $p^{2^{m - 5}}q^2$ & $r$, $q^2r$, $p^{{j_2}}q$, $p^{{j_1}}q^3$ \\
         \cline{2-3}
         & $r$ & $q^2r$, $p^{{j_2}}q$, $p^{{j_1}}q^3$, $p^kqr$, $p^kq^3r$ \\
         \cline{2-3}
         & $q^2r$ & $p^{{j_2}}q$, $p^{{j_1}}q^3$, $p^kqr$, $p^kq^3r$ \\
         \cline{2-3}
         & $p^{{j_2}}q$ & $p^{{j_2'}}q$, $p^{{j_1}}q^3$, $p^kqr$, $p^kq^3r$ \\
         \cline{2-3}
         & $p^{{j_1}}q^3$ & $p^{{j_1'}}q^3$, $p^kqr$, $p^kq^3r$ \\
         \cline{2-3}
         & $p^kqr$ & $p^{k'}qr$, $p^{k''}q^3r$ \\
         \cline{2-3}
         & $p^kq^3r$ & $p^{k'}q^3r$ \\
         \hline

         \multirow{11}{*}{$\mathcal{M}_{II-F, 9}(m)$}
         & $p^{2^{m - 5}}q^2$ & $r$, $q^2r$, $p^{{j_2}}q$, $p^{{j_1}}q^3$ \\
         \cline{2-3}
         & $r$ & $q^2r$, $p^kqr$, $p^{{j_1}}qr$, $p^kq^3r$, $p^{{j_2}}q^3r$ \\
         \cline{2-3}
         & $q^2r$ & $p^{{j_2}}qr$, $p^kqr$, $p^kq^3r$, $p^{{j_1}}q^3r$ \\
         \cline{2-3}
         & $p^{{j_2}}q$ & $p^{{j_2'}}q$, $p^{{j_1}}q^3$, $p^{{j_2'}}qr$, $p^{{j_2'}}q^3r$ \\
         \cline{2-3}
         & $p^{{j_1}}q^3$ & $p^{{j_1'}}q^3$, $p^{{j_1'}}qr$, $p^{{j_1'}}q^3r$ \\
         \cline{2-3}
         & $p^{{j_1}}qr$ & $p^{{j_1'}}qr$, $p^{{j_1''}}q^3r$ \\
         \cline{2-3}
         & $p^{{j_2}}qr$ & $p^{{j_2'}}qr$, $p^{{j_2''}}q^3r$ \\
         \cline{2-3}
         & $p^kqr$ & $p^{k'}qr$, $p^{k''}q^3r$ \\
         \cline{2-3}
         & $p^{{j_1}}q^3r$ & $p^{{j_1'}}q^3r$ \\
         \cline{2-3}
         & $p^{{j_2}}q^3r$ & $p^{{j_2'}}q^3r$ \\
         \cline{2-3}
         & $p^kq^3r$ & $p^{k'}q^3r$ \\
         \hline

         \multirow{6}{*}{$\mathcal{M}_{III, 10}(m)$}
         & $q$ & $p^{2^{m - 5}}s$, $qs$, $p^iqrs$ \\
         \cline{2-3}
         & $p^{2^{m - 5}}s$ & $qs$, $p^ir$ \\
         \cline{2-3}
         & $qs$ & $p^iqr$ \\
         \cline{2-3}
         & $p^ir$ & $p^{i'}r$, $p^{i'}qr, p^{i'}qrs$ \\
         \cline{2-3}
         & $p^iqr$ & $p^{i'}qr$, $p^{i''}qrs$ \\
         \cline{2-3}
         & $p^iqrs$ & $p^{i'}qrs$ \\
         \hline

         \multirow{7}{*}{$\mathcal{M}_{III, 11}(m)$}
         & $q$ & $p^{2^{m - 5}}s$, $qs$, $p^kqrs$ \\
         \cline{2-3}
         & $p^{2^{m - 5}}s$ & $qs$, $p^kr$ \\
         \cline{2-3}
         & $qs$ & $p^kqr$  \\
         \cline{2-3}
         & $p^kr$ & $p^{k'}r$, $p^{k'}qr$, $p^{k'}qrs$  \\
         \cline{2-3}
         & $p^kqr$ & $p^{k'}qr$, $p^{k''}qrs$ \\
         \cline{2-3}
         & $p^jrs$ & $p^{j'}rs$, $p^kqrs$ \\
         \cline{2-3}
         & $p^kqrs$ & $p^{k'}qrs$ \\
         \hline

         \multirow{7}{*}{$\mathcal{M}_{III, 13}(m)$}
         & $q$ & $p^is$, $p^krs$, $p^jqrs$ \\
         \cline{2-3}
         & $r$ & $p^js$, $p^jqrs$ \\
         \cline{2-3}
         & $qr$ & $p^ks$, $p^krs$ \\
         \cline{2-3}
         & $p^js$ & $p^{j'}s$, $p^krs$, $p^{j'}qrs$ \\
         \cline{2-3}
         & $p^ks$ & $p^{k'}s$, $p^{k'}rs$, $p^{j}qrs$  \\
         \cline{2-3}
         & $p^krs$ & $p^{k'}rs$, $p^jqrs$ \\
         \cline{2-3}
         & $p^jqrs$ & $p^{j'}qrs$ \\
         \hline

         \multirow{3}{*}{$\mathcal{M}_{III, 14}(m)$}
         & $q$ & $p^is$, $p^{2^{m - 5}}rs$, $qrs$ \\
         \cline{2-3}
         & $p^is$ & $p^{i'}s$, $p^{2^{m - 5}}rs$, $qrs$ \\
         \cline{2-3}
         & $p^{2^{m - 5}}rs$ & $qrs$ \\
         \hline
         \caption{The pairs of non-commuting involutory coset representatives $t_0$, $t_1$ of $Z(\mathcal{G})$, where $\mathcal{G} = \mathcal{M}_{II-A, 39}(m)$,
         $\mathcal{M}_{II-A, 40}(m)$,
         $\mathcal{M}_{II-B, 19}(m)$,
         $\mathcal{M}_{II-B, 20}(m)$,
         $\mathcal{M}_{II-C, 5}(m)$,
         $\mathcal{M}_{II-D,17}(m)$,
         $\mathcal{M}_{II-D, 18}(m)$,
         $\mathcal{M}_{II-E, 1}(m)$,
         $\mathcal{M}_{II-F, 8}(m)$,
         $\mathcal{M}_{II-F, 9}(m)$,
         $\mathcal{M}_{III, 10}(m)$,
         $\mathcal{M}_{III, 11}(m)$,
         $\mathcal{M}_{III, 13}(m)$,
         $\mathcal{M}_{III, 14}(m)$, and
         $i, \bar{i}, i', i'' = 0, 1, \ldots, 2^{m - 3} - 1$,
         $i' \neq i \mod{2^{m - 4}}$,
         $i'' \neq i + 2^{m - 5} \mod{2^{m - 4}}$,
         $j, \bar{j}, j' = 1, 3, \ldots, 2^{m - 3} - 1$,
         $j' \neq j \mod{2^{m - 4}}$,
         $j_1, j_1', j_1'' = 1, 5, \ldots, 2^{m - 3} - 3$,
         $j_1' \neq j_1 \mod{2^{m - 4}}$,
         $j_1'' \neq j_1 + 2^{m - 5} \mod{2^{m - 4}}$,
         $j_2, j_2', j_2'' = 3, 7, \ldots, 2^{m - 3} - 1$,
         $j_2' \neq j_2 \mod{2^{m - 4}}$,
         $j_2'' \neq j_2 + 2^{m - 5} \mod{2^{m - 4}}$,
         $k, k', k'' = 0, 2, \ldots, 2^{m - 3} - 2$,
         $k' \neq k \mod{2^{m - 4}}$,
         $k'' \neq k + 2^{m - 5} \mod{2^{m - 4}}$.}
         \label{tbl:M_II-A,39,40,II-B,19,20,II-C,5,II-D,17,18,II-E,1,II-F,8,II-F,9,M_III,10,11,13,14,noncomminvs}
     \end{longtabu}
}     
    
    \pagebreak
    
    \section*{Acknowledgments}
    It is an honor to acknowledge my dissertation advisers Prof. Ken-ichi Shinoda and Assoc. Prof. Yasushi Gomi of Sophia University and Prof. Ma. Louise Antonette N. De Las Pe\~{n}as of Ateneo de Manila University for their expertise and invaluable advice during the conduct of this research. It is a pleasure to thank Sophia University, Tokyo, Japan for allowing me to use the university's research facilities during my one-year research stay in Tokyo. I am equally indebted to the Commission on Higher Education Faculty Development Program (CHED FDP) of the Philippine Government for the PhD Sandwich Program Abroad scholarship.

\end{document}